\def\A{\mathbf{A}}
\title[Archimedean $L$-factors]
{Archimedean zeta integrals on $GL_n \times GL_m$ and $ SO_{2n+1} \times  GL_m$}
\author{Taku Ishii}
\address{Faculty of Science and Technology,
Seikei University, 3-3-1 Kichijoji-Kitamachi,
Musashino, Tokyo, 180-8633, Japan}
\email{ishii@st.seikei.ac.jp}
\author{Eric Stade}
\address{Department of Mathematics, University of Colorado, Boulder, CO 80309, USA}
\email{stade@colorado.edu}
\date{\today}
\newcommand{\newcom}{\newcommand}
\newcom{\renewcom}{\renewcommand}
\def\R{{\mathbf{R}}} % \R== \textbf{R}
\def\C{{\mathbf{C}}} % \C== \textbf{C}
\def\Z{{\mathbf{Z}}} % \Z== \textbf{Z}
\def\ls{L(s,\pi\times\pi')}
\def\lsnu{L(s,\pi_{\nu}\times\pi' _{\nu})}
\def\zs{Z(s,\varphi,\varphi')}
\def\zsnu{Z_\nu(s,W_{\nu},W'_{\nu})}
\def\ds{\displaystyle}
\def\phi{\varphi}
\def\bs{\backslash}
\theoremstyle{plain}
\newtheorem{thm}{Theorem}[section]
\newtheorem{lem}[thm]{Lemma}
\newtheorem{cor}[thm]{Corollary}
\newtheorem{prop}[thm]{Proposition}
\theoremstyle{definition}
\newtheorem{defn}[thm]{Definition}
\newtheorem{rem}{Remark}
\newcom{\bpf}{{\it Proof. }}
\newcom{\epf}{\hfill $\Box$}
\numberwithin{equation}{section}
\begin{document}
\maketitle
%%%%%%%%% ABSTRACT %%%%%%%%%%%%
\begin{abstract} In this paper, we evaluate archimedean zeta integrals for automorphic $L$-functions on $GL_n \times GL_{n-1+\ell}$ and on $ SO_{2n+1} \times  GL_{n+\ell}$, for $\ell=-1$, $0$, and $1$. In each of these cases, the zeta integrals in question may be expressed as Mellin transforms of products of class one  Whittaker functions. Here, we obtain explicit expressions for these Mellin transforms in terms of Gamma functions and Barnes integrals.

When $\ell=0$ or $\ell=1$, the archimedean zeta integrals amount to integrals over the full torus. We show that, as has been predicted by Bump for such domains of integration, these zeta integrals are equal to the corresponding local $L$-factors -- which are simple rational combinations of Gamma functions. In the cases of $GL_n \times GL_{n-1}$ and $GL_n \times GL_n$ this has, in large part, been shown previously by the second author of the present work, though the results here are more general in that they do not require the assumption of trivial central characters. (Our techniques here are also quite different. New formulas for $GL(n,\R)$ Whittaker functions, obtained recently by the authors of this work, allow for substantially simplifed computations.)  For the cases of $SO_{2n+1} \times GL_{n}$ and $SO_{2n+1} \times GL_{n+1}$, results such as these have been deduced previously only for a few instances of small rank.

In the case $\ell=-1$, we express our archimedean zeta integrals explicitly in terms of Gamma functions and certain  Barnes-type integrals. These evaluations rely on new recursive formulas, derived herein, for $GL(n,\mathbf{R})$ Whittaker functions. Our results concerning these zeta integrals, in these situations, generalize work of Hoffstein and Murty for the case of $GL_3\times GL_1 $, and parallel similar results, obtained by the first author of the present work and by
Moriyama, for the spinor
$L$-function on $GSp_2 $.

Finally, in our appendix, we indicate an approach to certain unramified calculations, on $SO_{2n+1}\times GL_n$ and $SO_{2n+1}\times GL_{n+1}$, that parallels our method in the corresponding archimedean situation.  While the unramified theory has already been treated using more direct methods, we hope that the connections evoked herein might facilitate future archimedean computations.

\end{abstract}
%%%%%%%%%%%%%%%%%%%%%%%%%%%%%%%%%%%%%%%
% \renewcommand{\thefootnote}{\fnsymbol{footnote}}
%\footnote[0]{2000 \textit{Mathematics Subject Classification}: }
%%%%%%%%%%%%%%%%%%%%%%%%%%%%%%%%%%%%%%%%%%%%%%%%%%%%

\section*{Introduction}

In this paper we investigate the behavior, at archimedean places, of zeta integrals associated to certain automorphic forms and $L$-functions, and develop explicit formulas for these integrals at such places.

More specifically: let $ G $ and $G'$ be reductive algebraic groups; let $\pi$ and $\pi'$ be automorphic
cuspidal representations of
$ G (\A)$ and $G'(\A)$ respectively, where
$\A$ denotes the adeles over a global field $F$.
Following Langlands \cite{L}, one may define a global
$L$-function $\ls$, for $s\in\C$ of sufficiently large real part. The Langlands program then predicts that $\ls$ should admit meromorphic continuation in $s$ (with only finitely many poles in $\C$), and a functional equation under $(\pi,\pi',s)\to (\pi^\vee,(\pi')^\vee,1- s)$ (where $\pi^\vee$ and $(\pi')^\vee$ are the representations contragredient to $\pi$ and $\pi'$, respectively).

An especially powerful and fruitful approach to the study of $\ls$ is through the zeta-integral method. At the core of this method is the construction of a certain ``global zeta integral'' $Z(s,\varphi\times\varphi')$, which is, typically, an adelic integral involving automorphic forms associated with $\pi$, $\pi'$, and $s$. The specific form taken by $Z(s,\varphi\times\varphi')$ will depend on the choice of $ G $ and $G'$; see (\ref{zdef}) below for details, in the cases of concern to us in this article. In any case, the automorphicity and analytic properties of the various factors in the integrand of $Z(s,\varphi\times\varphi')$ should bestow upon this global integral both a meromorphic continuation and a
functional equation in $s$.
To deduce similar
properties of $\ls$, one writes both this global $L$-function and the global
integral $Z(s,\varphi\times\varphi')$ as products of local factors, and then compares, at each place of $F$, the
local $L$-function -- which will, generally, amount to a rational combination of gamma functions involving $s$ and the eigenvalues of $\phi$ and $\phi'$ -- with the local zeta integral.

Let us elaborate, in the situations that will concern us  in this article, namely:  the cases $(G,G')=(GL_n,GL_{n-1+\ell})$ and $(G,G')=(SO_{2n+1},GL_{n+\ell})$, for $\ell$ an integer with $|\ell|\le1$.   (Of course, by symmetry, our investigations will also apply to the cases of $(G,G')=(GL_n,GL_{n+j})$, for $j=1$ or $2$.) See, in particular, the works of Jacquet and Shalika \cite{JS1}, \cite{JS2}, and \cite{JS3}, and of Jacquet, Piatetski-Shapiro, and Shalika \cite{JPS}, for those cases where $ G =GL_{n}$; the works of Gelbart, Piatetski-Shapiro, and Rallis \cite{GPR}, of Ginzburg \cite{Gi},  and of Soudry \cite{So1}, \cite{So2} for those cases where $ G =SO_{2n+1}$; and the monograph of Gelbart and Shahidi \cite{GS} for general discussions. We summarize some of the relevant points as follows.

The global zeta integral $\zs$ may be defined  by:
\begin{align}\begin{split}\label{zdef}
&\zs 
\\
&=\begin{cases}
\ds \int_{ GL_n (F)\bs
GL_n (\A)}\phi(g)\,\phi'(g)\,E (g,s)\,dg &\mbox{ if } (G,G') = (GL_n,GL_n) ,  
\\
\ds\int_{ GL_{m} (F)\bs
GL_{m} (\A)}\phi_N\big|_{GL_{m}} (g) \phi' (g)  \,  |\mbox{det}(g)|^{s-1/2}\,dg &\mbox{ if } (G,G') = (GL_n,GL_{m}) \\\noalign{\vskip-24pt}
\\&\qquad\qquad\qquad\mbox{ for }m<n,
\\
\ds \int_{ SO_{2n+1}(F)\bs
SO_{2n+1}(\A)}\,\phi(g)\,E_{\phi'}\big|_{ G } (g,s) \,dg &\mbox{ if } (G,G') = (SO_{2n+1},GL_{n+1})  ,
\\
\ds \int_{ SO_{2m}(F)\bs
SO_{2m}(\A)}\phi_N\big|_{SO_{2m}} (g) \,E_ {\phi'}(g,s)\, dg &\mbox{ if }(G,G') = (SO_{2n+1},GL_{m})\\\noalign{\vskip-20pt}
\\&\qquad\qquad\qquad\mbox{ for }m\le n.
\end{cases}\end{split}
\end{align}
We explain: in each case,
$\phi$ and $\phi'$ are cusp forms in the spaces of $\pi$ and $\pi'$ respectively. Also, $E(g,s)$ denotes a maximal parabolic Eisenstein series on $GL_n$, while $E_{\phi'}(g, s)$ denotes an Eisenstein series on $SO_{2m}$,   induced from the representation $ \pi\otimes |\hbox{det}|^{s-1/2}$ on the Levi component $M_{m}\cong GL_{m}$ of the Siegel parabolic subgroup $P_{m}$ of $SO_{2m}$. Further, in general, $f|_D$ denotes restriction of the function $f$ to the subdomain $D$. (In the cases above, the indicated subdomains must be realized as subgroups of the original domains in suitable ways.)  Finally, $\phi_N$ denotes an averaging  of $\phi $, weighted by  a nondegenerate character $\psi$, over a certain subgroup $N$ of $GL_n$ (in the case of $GL_n\times GL_m$) or of $SO_{2n+1}$  (in the case of $SO_{2n+1}\times GL_m$). (In the latter case, $N$ is trivial when $m=n$.)

Now let us write $\pi =\otimes_\nu \pi_{ \nu}$, with $\pi_{ \nu}$ a
representation of $G (F_\nu)$ for each   place $\nu$ of $F$, and similarly write $\pi'=\otimes_\nu \pi'_{ \nu}$.
Then we have the factorization $\ls=\prod_\nu \lsnu$. We will also assume, from now on, that $\pi'$ is {\it generic}, meaning it admits a nonzero Whittaker model. (This is automatic in the cases $ G' =GL_n,\,GL_{n+1},$ and $GL_{n+2}$ -- automorphic representations of $GL_m$ are always generic. For this same reason, $\pi$ will, in our situation, always admit a nonzero Whittaker model.)
Then, by
the Rankin-Selberg unfolding method, the global integral $\zs$ may itself be expressed as a product of local factors
$\zsnu$. Here, $W_{\nu}$ is a Whittaker function for $\pi_{\nu}$ and $W'_{\nu}$ a Whittaker function for $\pi'_{\nu}$, and $\zsnu$ may be seen to amount, essentially, to a certain Mellin transform of $W_{\nu}$ times $W'_{\nu}$. (More specific details on these local Whittaker functions and zeta integrals will be provided in the sections to follow.)

Again, the program at hand entails the comparison of $\lsnu$ to $\zsnu$, at each place $\nu$. To this end, it has been shown  that, if $\nu$ is a nonarchimedean place of $F$, and $\pi_{\nu}$ and
$\pi'_{\nu}$ are unramified, class one principal series representations, then 
$\lsnu$ and $\zsnu$ are, in each of the cases delineated above,  in fact {\it equal}.  (See the references cited in the fourth paragragh of this Introduction.)

In the present work we will show that, in certain circumstances, the
archimedean places behave like the unramified nonarchimedean ones, while in other circumstances they behave somewhat differently.
Specifically, let us and assume that $\nu$ is real or complex, and that
$\pi_{\nu}$ and $\pi'_{\nu}$ are irreducible class one principal series
representations.  (Irreducibility of such representations is automatic  for almost all values of $\nu$ and $\nu'$.)
We prove that, if these conditions are met, then in all of the above cases in which $\ell$ is nonnegative, we have
\begin{align*}
\zsnu=\lsnu. \end{align*} 
These results agree with a conjecture of Bump [Bu3], which states that, in general, archimedean zeta integrals and $L$-functions should   coincide in cases where the former are defined as integrals over the full torus.

Strictly speaking, our computations below will be performed under the assumption that $\nu$ is, in fact, {\it real}. In the cases where $ G =GL_{n+1-\ell}$ for some $\ell$, the real case implies the complex case, by straightforward relationships (cf. \cite{St95}) between real and complex Whittaker functions for $GL_m$. Analogous relationships between real and complex $SO_{2n+1}$ Whittaker functions may be deduced similarly.

We remark that, in the cases $(G,G')=(GL_n,GL_{n-1})$ and $(G,G')=(GL_n,GL_n)$, equality of the archimedean zeta integrals and the corresponding $L$-functions has already been demonstrated, under the somewhat more restrictive assumption that the representations in question are induced from representations with trivial central characters. See \cite{Ja2} for the case $ G \times G'=GL_2\times GL_2$, \cite{Bu1} for the case $ G \times G'=GL_3\times GL_2$, \cite{St01} for the general case of $GL_{n}\times GL_{n-1}$, and \cite{St02} for the general case of $GL_n\times GL_n$. By expressing Whittaker functions corresponding to more general central characters in terms of those arising in the case of trivial central characters, we obviate, in the present work, the need for such restrictions. Our proofs herein are further simplified by the use of more recently obtained recursive formulas, developed by the authors in \cite{ISt}, for archimedean Whittaker functions on $GL_n$.

We also note that, for certain small values of $n$, the archimedean calculations for $SO_{2n+1}\times GL_{n+\ell}  $ have also been developed previously. See \cite{N} for the case of $SO_5\times GL_2$, and \cite{I10} for the case of $SO_5\times GL_3$. (Results concerning $SO_3$ are subsumed by those regarding $GL_2$, since $SO_3(\R)$ Whittaker functions may be realized as special cases of $GL_2(\R)$ Whittaker functions. In particular, compare Proposition 1.2, below, in the case $n=2$, with Proposition 1.3 in the case $n=1$.)

In the cases of $ GL_n\times GL_{n-2}$ and $ SO_{2n+1}\times GL_{n-1}$, the archimedean zeta integrals in question are taken over proper subgroups of the full torus. In such situations one does not, generally, expect equality of these zeta integrals with the associated local $L$-functions. Indeed Hoffstein and Murty \cite{HM} have shown that,   in the case of $GL_3\times GL_1 $ and for $\nu$ real, the quotient
$$\zsnu/\lsnu
$$is expressible as a single-fold Barnes integral (as defined in Section 1.3 below). In this present work, we extend the results of Hoffstein and Murty to the case of general $n$, and give explicit formulas for the Barnes integrals that arise in such cases. Our results here may also be seen as analogous to those of the first author of this work and of Moriyama \cite{IM}, for the spinor $L$-function on $GSp_2$.

The present paper will proceed as follows.
In Section 1 we recall fundamental notions of principal series representations; we also present a variety of recursive formulas (many of them formerly derived elsewhere, but a number of them new) for class one Whittaker functions, and for Mellin transforms thereof. We also derive some recursive formulas for (multifold) Barnes integrals, which will be critical to our evaluation of archimedean zeta integrals in subsequent sections.

In Section 2 we consider the archimedean zeta integrals for $GL_n \times  GL_{n-1+\ell}$, in the cases $\ell=-1,0,1$. We demonstrate that, in the last two of these cases, these zeta integrals coincide with the associated archimedean $L$-functions, which are themselves expressible as products of $(n-1+\ell)n$ Gamma functions. Again, these results are consistent with Bump's conjecture \cite{Bu2} concerning archimedean zeta integrals over a full torus. We also show that, in the first case (where the integration is {\it not} over the full torus), the archimedean zeta integral equals the associated $L$-factor (which is in turn equal to a product of $(n-2)n$ Gamma functions) {\it times} a certain Barnes-type integral (in a single variable of integration). As noted above, this generalizes work of Hoffstein and Murty  concerning $L$-functions on $GL_3\times GL_1$, and parallels work of the first author of this work and of Moriyama  regarding the spinor $L$-function on $GSp_2$.

In Section 3 we compute the archimedean zeta integrals for $SO_{2n+1}\times GL_{n+\ell} $, for $\ell$ equal to $-1$, $0$, or $1$. We demonstrate that, in the last two of these cases, the zeta integral coincides with the associated archimedean $L$-factor, which itself equals a product of $2n(n+\ell)  $ Gamma factors, divided by a product of $n+\ell$ such. (These results again reflect Bump's conjecture, cited directly above, and generalize the work of Niwa \cite{N} in the case of $SO_5\times GL_2$, and of the first author of this paper \cite{I10} for the case of $ SO_5\times GL_3  $.) Additionally we demonstrate that, in the  case of $ SO_{2n+1}\times GL_{n-1}$ (where the integration is {\it not} over the full torus), the archimedean zeta integral equals the associated $L$-factor (which comprises, in this case,  $2n(n-1)$ Gamma factors in the numerator and $n-1$ in the denominator)  {times} a single-fold integral of Barnes type.

Finally, in our appendix, we outline an approach to unramified zeta integrals on $SO_{2n+1}\times GL_{n} $  and on $SO_{2n+1}\times GL_{n+1} $ that mimics, in number of ways, our treatment of the analogous archimedean entities.  While these unramified calculations have been performed previously, in more direct fashions, we hope that the parallels suggested by our method might help elucidate other aspects of the  archimedean theory.

The authors would like to thank Professor Takayuki Oda for his time, his expertise, his hospitality, and his generosity, all of which greatly facilitated the collaboration that resulted in this work.

%%%%%%%%%%%%%%%%%%%%%%%%%%%%%%%%%%%%%%%%%%%%%%%%%%%%%%%%%%%
%%%%%%%%%%%%%%%%%%%%%%%%%%%%%%%%%%%%%%%%%%%%%%%%%%%%%%%%%%%%%%%
\section{Explicit formulas for Whittaker functions}
%%%%%%%%%%%%%%%%%%%%%%%%%%%%%%%%%%%%%%%%%%%%%%%%%%%%%%%%%%%
%%%%%%%%%%%%%%%%%%%%%%%%%%%%%%%%%%%%%%%%%%%%%%%%%%%%%%%%%%%

In this section, we discuss class one principal series representations and the associated Whittaker functions.  We also recall explicit formulas, obtained by the authors \cite{I08}, \cite{ISt}, for Whittaker functions
on $ GL_n(\R) $ and $ SO_{2n+1}(\R) $, and provide some new formulas concerning $ GL_n(\R) $ Whittaker functions.

\subsection{Class one Whittaker functions}

Let $ G $ be the group $ GL_n(\R) $ or $ SO_{2n+1}(\R) $.
Here $ SO_{2n+1}(\R) = SO_{n+1,n}(\R) $ is the split special orthogonal group
with respect to the anti-diagonal matrix
$ \begin{pmatrix}
& & 1 \\ & \rotatebox[origin=c]{45}{$\cdots$} & \\ 1 & &
\end{pmatrix} $
of size $2n+1$.
Denote by $X$ the maximal unipotent subgroup of $G$ that consists of
upper triangular unipotent matrices in $G$.
Let $ Y $ be the subgroup of $G$ defined by
\begin{align*}
Y  = \begin{cases}
\{ {\rm diag}(t_1,\dotsc,t_n) \mid t_i >0 \} & \mbox{ if } G = GL_n(\R), \\
\{ {\rm diag}(t_1,\dotsc,t_n,1,t_n^{-1},\dotsc,t_1^{-1})
\mid t_i>0 \} & \mbox{ if } G= SO_{2n+1}(\R).
\end{cases}
\end{align*}
We introduce coordinates $ (y_1,\dotsc,y_n) $ $ (y_i>0) $ on $Y$ by putting
$ y_i  = t_i/t_{i+1} $ for $ 1 \le i \le n-1 $ and $ y_n  = t_n $; we then write
\begin{align*}
\alpha[y_1,\dotsc,y_n] & = {\rm diag}(y_1 \cdots y_n, y_2 \cdots y_n, \dotsc, y_n) \in GL_n(\R),
\\
\beta[y_1,\dotsc,y_n] & = {\rm diag}(y_1 \cdots y_n, \dotsc, y_n,1,y_n^{-1}, \dotsc,
(y_1 \cdots y_{n})^{-1}) \in SO_{2n+1}(\R).
\end{align*}

We write $ O(n) = \{ g \in GL_n(\R) \mid {}^tg g = 1 \} $ and $ SO(m) = SL_m(\R) \cap O(m) $, and take $K$ to be the maximal compact subgroup of $ G $ defined by
\begin{align*}
K = \begin{cases}
O(n) & \mbox{ if } G = GL_n(\R),
\\ SO(2n+1) \cap G \cong SO(n) \times SO(n+1) & \mbox{ if } G =SO_{2n+1}(\R). \end{cases}\end{align*}
Then the Iwasawa decomposition $ G = XYK $ holds.

To introduce the notion of a Whittaker function, we define a nondegenerate unitary character $ \psi $ on $X $ by
\begin{align*}
\psi(x)  = \exp \Bigl( 2\pi i \sum_{j=1}^{m} x_{j,j+1} \Bigr), \ \ \ x = (x_{j,k}) \in X.
\end{align*}
Here $ m = n-1$ if $ G=GL_n(\R) $, while $ m = n $ if $ G=SO_{2n+1}(\R) $.
Let $ {\mathcal W}(\psi) $ be the space of smooth functions $w : G \to \C $ satisfying
$$
w(xg) = \psi(x) w(g), \ \ \mbox{for all } (x,g) \in X \times G.
$$
By the right regular action, the space $ {\mathcal W}(\psi) $ becomes a $G$-module.
For an irreducible admissible smooth representation $ \pi $ of $ G $,
it is known that the dimension of the space
$ {\rm Hom}_{G}(\pi, {\mathcal W}(\psi)) $
of intertwining operators is at most one. Throughout, we will assume that $\pi$ is {\it generic}, meaning this dimension is, in fact, exactly equal to one. (Again, this stipulation is redundant in the case of $GL_n(\R)$.)

We denote by $ {\mathcal W}(\pi,\psi) $ the image of $ \pi $ in
$ {\mathcal W}(\psi) $.
Then, for a vector $ v \in \pi $, we define the ($\psi$-){\it Whittaker function attached to $v$} to be the image
$ W_v $ (uniquely defined up to a scalar) of $v$ in $ {\mathcal W}(\pi,\psi) ,$ under such an
intertwining operator.

In this paper we consider Whittaker functions for {\it class one principal series representations} of $G$, defined as follows.
Let $ H $ be the Borel subgroup of $G$ with Langlands decomposition $H = MYX $,
where
\begin{align*}
M= \begin{cases}
\{ {\rm diag}(\varepsilon_1,\dotsc,\varepsilon_n) \mid \varepsilon_i \in \{\pm 1\} \}
& \mbox{ if } G = GL_n(\R), \\
\{ {\rm diag}(\varepsilon_1,\dotsc,\varepsilon_n,1,\varepsilon_n,\dotsc,\varepsilon_1)
\mid \varepsilon_i \in \{\pm 1\} \} & \mbox{ if } G= SO_{2n+1}(\R).
\end{cases}
\end{align*}
Let $$ y^{\rho ,A}  = \prod_{j=1}^{n-1} y_j^{j(n-j)/2},\quad y^{\rho, B}  = \prod_{j=1}^{n} y_j^{j(n-j/2)}.$$ 
Then for $ a  = (a_1,\dotsc,a_n), b  = (b_1,\dotsc,b_n) \in \C^n $,
we define characters $ \chi_a^A $, $\chi_b^B$ of $H$ by
\begin{align*}
\begin{cases}
\chi_a^A(myx) = y^{\rho ,A} \prod_{j=1}^n y_j^{a_1+\cdots+a_j } & \mbox{ if } G= GL_n(\R), \\
\chi_b ^B(myx) = y^{\rho ,B} \prod_{j=1}^n y_j^{b_1+\cdots+b_j  } & \mbox{ if } G= SO_{2n+1}(\R),
\end{cases}
\end{align*}
where $ m \in M $, $ x \in X $, and
\begin{align*}
Y\ni y =\begin{cases}
\alpha[y_1,\dotsc,y_n] & \mbox{ if } G= GL_n(\R), \\
\beta[y_1,\dotsc,y_n]
& \mbox{ if } G= SO_{2n+1}(\R).
\end{cases}
\end{align*}
We call the induced representations
$$ \pi_a^A  = {\rm Ind}_{H}^{GL_n(\R)}(\chi_{a}^A)
\mbox{ and }
\pi_b^{B}  = {\rm Ind}_{H}^{SO_{2n+1}(\R)} (\chi_{b }^{B}) $$
the {\it class one principal series representations} of $ GL_{n}(\R) $ and
$ SO_{2n+1}(\R) $, respectively.

There is then a vector $ v_0 $ in $ \pi_a^A $ (respectively, $\pi_b^B$), unique up to constants, satisfying $\pi_a^A(k)v_0=v_0$ (respectively, $\pi_b^B(k)v_0=v_0$) for all $k\in K$. We call the Whittaker function $ W_{v_0} $ attached to $ v_0 $
the {\it class one Whittaker function} on $G$. (This Whittaker function is uniquely defined only up to scalars; it is conventional to normalize our choice of $W_{v_0}$ by stipulating that $W_{v_0}(e)=1$, where $e$ is the identity in $G$.)
Since the class one Whittaker function $W_{v_0}$ on $G$ is a right $K$-invariant function,
the Iwasawa decomposition implies that
$ W_{v_0} $ is determined by its restriction $W_{v_0}|_Y $ to $Y$.
Conversely, we can extend a function $w$ on $ Y $ to a function on $G$ by defining
\begin{align} \label{radial}
w(g)  = \psi(x(g)) w(y(g)), \ \ g \in G,
\end{align}
where $ g = x(g) y(g) k(g) $ $ (x(g) \in X, y(g) \in Y, k(g) \in K) $ is
the Iwasawa decomposition of $g$.

%%%%%%%%%%%%%%%%%%%%%%%%%%%%%%%%%%%%%

\subsection{Integral representations of  class one Whittaker functions}

We now wish to present explicit formulas for the class one Whittaker functions on $G$.
Since these formulas relate Whittaker functions
on $GL_{n}(\R) $ and $SO_{2n+1}(\R) $ to those on $ GL_{n-1}(\R) $ and $SO_{2n-1}(\R) $
respectively, we need to first do the same for our class one principal series representations.

%%%%% DEF of {\widetilde} %%%%%%
\begin{defn}
\begin{itemize}
\item[(1)]
To the class one principal series representation $ \pi_a^A $ of $GL_n(\R) $,
we associate a class one principal series
representation $ \pi_{\widetilde{a} } $ ($\widetilde{a}  = (\widetilde{a}_1, \dotsc, \widetilde{a}_{n-1})$) of $ GL_{n-1}(\R)$
by putting $\widetilde{a}_j  = a_{j+1} + a_1/(n-1) $ for $1\le j\le n-1$.
\item[(2)]
To the class one principal series representation $ \pi^B_{b} $ of $SO_{2n+1}(\R) $,
we associate a class one principal series representation
$ \pi^B_{\widetilde{b} } $ ($\widetilde{b}  = (\widetilde{b}_1, \dotsc, \widetilde{b}_{n-1})$) of $ SO_{2n-1}(\R)$
by putting $ \widetilde{b}_j  = b_j $ for $1\le j\le n-1$.
\end{itemize}
\end{defn}

Note that, if we write
$$
|a|  = \sum_{j=1}^n a_j, \ \ \
|\widetilde{a}|  = \sum_{j=1}^{n-1} \widetilde{a}_j
$$
for $ \pi_a^A $ and $ \pi_{\widetilde{a}}^A $, then have $ |a| = |\widetilde{a}| $.

Our explicit formulas for class one Whittaker functions in terms of
the associated class one principal series representations are given by the following two propositions.

%%%%%%%% GL-Whittaker %%%%%%%%%
\begin{prop} \label{glwhit}\cite[cf. Theorem 14]{ISt}
For $ G = GL_n(\R) $, we define a function
\begin{align*}
W_{n,a}^A (\alpha[y_1,\dotsc,y_n])
& = y^{\rho,A}\,
\widehat{W}_{n,a}^A(\alpha[y_1,\dotsc,y_{n-1},y_n])
= y^{\rho,A} \cdot y^{|a|}
\widehat{W}_{n,a}(\alpha[y_1,\dotsc,y_{n-1},1])
\end{align*}
on $ Y $ by the recursive relation
\begin{align*}
& \widehat{W}_{n,a}^{A}(\alpha[y_1,\dotsc, y_{n-1},1 ])
\\
&  = \int_{(\R_+)^{n-1}}
\widehat{W}_{n-1,\widetilde{a}}^A
\left(\alpha\left[
y_2 \sqrt{\frac{t_2}{t_1}},\dotsc,y_{n-1} \sqrt{\frac{t_{n-1}}{t_{n-2}}},1 \right] \right)
\\
& \times
\biggl[ \prod_{j=1}^{n-1} \exp \Bigl\{ -(\pi y_j)^2 t_j - \frac{1}{t_j} \Bigr\}
\cdot (\pi y_j)^{ {(n-j)a_1}/{(n-1)} }
t_j^{ {n a_1}/{(2(n-1))} } \biggr]
%\\
%& \times
(\pi t_{n-1})^{- {|a|}/{2}}
\prod_{j=1}^{n-1} \frac{dt_j}{t_j}
\end{align*}for $n\ge3$,
and $ \widehat{W}_{2,(a_1,a_2)}^{A}(\alpha[y_1,1])  = 2y_1^{(a_1+a_2)/2} K_{(a_1-a_2)/2}(2\pi y_1) $. (Here, $K$ denotes the $K$-Bessel function, cf. \cite{WW}.)
If we extend the function $ W_{n,a}^A $ to $ G $ by $ (\ref{radial}) $,
then $ W_{n,a}^A $ gives a class one Whittaker function on $G$.
\end{prop}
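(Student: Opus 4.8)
The plan is to deduce the stated recursion from \cite[Theorem 14]{ISt}, which already furnishes an integral recursion of exactly this shape for class one Whittaker functions on $GL_n(\R)$; the task here is to rewrite that formula in the coordinates $(y_1,\dots,y_n)$ and for a general parameter $a$ in place of a normalized one. First I would recall the structural input: by uniqueness of the Whittaker model, $\mathrm{Hom}_{G}(\pi_a^A,\mathcal W(\psi))$ is at most one-dimensional, and extension by $(\ref{radial})$ always lands in $\mathcal W(\psi)$; hence the class one Whittaker function $W_{v_0}$ is, up to a scalar, the unique right $K$-invariant, moderate-growth element of $\mathcal W(\pi_a^A,\psi)$ -- equivalently, it is pinned down among right $K$-invariant moderate-growth functions on $Y$ by the system of eigenvalue equations coming from the action of $\mathcal Z(U(\mathfrak{gl}_n))$ through $\chi_a^A$, together with the normalization $W_{v_0}(e)=1$. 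So it suffices to check that the function $W_{n,a}^A$ produced by the recursion has these properties.

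Next I would carry out the translation. The base case $n=2$ is classical: $2y_1^{(a_1+a_2)/2}K_{(a_1-a_2)/2}(2\pi y_1)$ is the Jacquet--Whittaker function for $\pi_{(a_1,a_2)}^A$ on $GL_2(\R)$, solving the Casimir eigenvalue equation attached to $\chi_{(a_1,a_2)}^A$ and equal to $1$ at the identity. For the inductive step I would exploit two elementary reductions on parameters: twisting by a power of $\det$ replaces $a$ by $a-(c,\dots,c)$ and multiplies $W_{n,a}^A$ on $Y$ by $\prod_j y_j^{jc}$, while the passage from $\pi_a^A$ to the associated $\pi_{\widetilde a}^A$ on $GL_{n-1}(\R)$ only shifts the lower-rank parameter, with $|\widetilde a|=|a|$. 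Arranging the shift so that the $GL_{n-1}(\R)$ parameter is $\widetilde a_j=a_{j+1}+a_1/(n-1)$ is precisely what produces the weights $(\pi y_j)^{(n-j)a_1/(n-1)}\,t_j^{na_1/(2(n-1))}$ and the factor $(\pi t_{n-1})^{-|a|/2}$ in the kernel; what remains is to match, term by term, the resulting integral with the one in \cite[Theorem 14]{ISt} under the appropriate change of variables in $(\R_+)^{n-1}$, and to verify that the normalization is preserved. A self-contained alternative, avoiding \cite{ISt}, is a direct induction on $n$: granting that $W_{n-1,\widetilde a}^A$ is the class one Whittaker function for $\pi_{\widetilde a}^A$, one shows that the integral over $(\R_+)^{n-1}$, extended by $(\ref{radial})$, is smooth, right $K$-invariant, $(X,\psi)$-equivariant, of moderate growth, and a $\mathcal Z(U(\mathfrak{gl}_n))$-eigenfunction with the infinitesimal character of $\chi_a^A$, and then invokes the uniqueness above; this route is essentially a partial unfolding of the Jacquet integral for $\pi_a^A$, peeling off one $GL_{n-1}(\R)$ block.

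In either formulation the main obstacle is the verification that the prescribed kernel reproduces the \emph{entire} system of invariant-differential-operator eigenvalue equations -- not merely the Casimir -- i.e.\ the precise bookkeeping showing that the powers of $y_j$ and $t_j$ appearing in the kernel, especially the exponents involving $a_1/(n-1)$ and the $(\pi t_{n-1})^{-|a|/2}$ factor, are exactly those forced by descending from $GL_n(\R)$ to $GL_{n-1}(\R)$. In the citation-based approach this reappears as the need to match the kernel above with that of \cite[Theorem 14]{ISt} coefficient by coefficient after the change of variables; I expect this step to be routine but delicate.
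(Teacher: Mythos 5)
Your plan matches the paper's: the paper indeed proves this by taking the $SL_n(\R)$ recursion of \cite[Theorem 14]{ISt} (stated for parameters $\nu$ with $|\nu|=0$) and translating to $GL_n(\R)$ via a determinant-power twist. Concretely, setting $\nu_i = a_i - |a|/n$, the paper uses
$W_{n,a}^A(\alpha[y_1,\dotsc,y_n]) = (\det\alpha[y_1,\dotsc,y_n])^{|a|/n}\,W_{n,a}^A(\alpha[y_1,\dotsc,y_{n-1},(y_1y_2^2\cdots y_{n-1}^{n-1})^{-1/n}])$
to get $\widehat W_{n,a}^A(\alpha[y_1,\dotsc,y_{n-1},1]) = \bigl[\prod_j y_j^{j|a|/n}\bigr]\widehat W_{n,\nu}^{SL}(y_1,\dotsc,y_{n-1})$, exactly the $\det$-twist you describe, and then substitutes this on both sides of the $SL$ recursion using $|\widetilde a|=|a|$.

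One point where your framing diverges, though, and in a way that would make your write-up heavier than it needs to be: you suggest that ``in either formulation the main obstacle is the verification that the prescribed kernel reproduces the entire system of invariant-differential-operator eigenvalue equations.'' In the citation-based route this is not the obstacle at all, and the paper does not touch it. Once \cite[Theorem 14]{ISt} is granted, the fact that the $SL$-normalized recursion produces the class one Whittaker function is already established; the passage from $SL$ to $GL$ is then a purely algebraic bookkeeping of exponents. The paper's only remaining step is the one-line ``we can check that'' identity matching the product
$\bigl[\prod_j y_j^{j|a|/n}\bigr]\bigl[\prod_j(y_{j+1}\sqrt{t_{j+1}/t_j})^{-j|a|/(n-1)}\bigr]\bigl[\prod_j(\pi y_j)^{(n-j)\nu_1/(n-1)}t_j^{n\nu_1/(2(n-1))}\bigr]$
with $\bigl[\prod_j(\pi y_j)^{(n-j)a_1/(n-1)}t_j^{na_1/(2(n-1))}\bigr](\pi t_{n-1})^{-|a|/2}$. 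No invariant differential operators need to be invoked once you lean on \cite{ISt}. Your alternative self-contained route (checking $K$-invariance, $(X,\psi)$-equivariance, moderate growth, and the infinitesimal character directly) is viable in principle but is precisely the heavy lifting that citing \cite{ISt} is meant to avoid; if you keep the citation, you can and should drop the eigenvalue-equation verification from your plan and replace it with the explicit power-matching identity.
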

%%%%%%%%%%%%%%%%%%%%%%%%%%%%%%%%
\bpf
In our previous paper \cite{ISt} we considered the context of $ SL_n(\R) $, that is,
we assumed $ |a| = 0 $ for the parameter of
the class one principal series representation.
The recursive relation for the radial part
$$ \widehat{W}_{n,\nu}^{SL}(y_1,\dotsc,y_{n-1})
= \widehat{W}_{n,\nu}^{SL}( \alpha[y_1,\dotsc, y_{n-1},
(y_1 y_2^2 \cdots y_{n-1}^{n-1})^{-1/n}])
$$
of the ``$\rho$-shifted'' Whittaker function on $SL_n(\R) $ is given by
\begin{align*}
\widehat{W}_{n,\nu}^{SL}(y_1,\dotsc, y_{n-1})
& = \int_{(\R_+)^{n-1}}
\widehat{W}_{n-1,\widetilde{\nu}}^{SL}
\left(
y_2 \sqrt{\frac{t_2}{t_1}},\dotsc,y_{n-1} \sqrt{\frac{t_{n-1}}{t_{n-2}}}\right)
\\
& \times
\biggl[ \prod_{j=1}^{n-1} \exp \Bigl\{ -(\pi y_j)^2 t_j - \frac{1}{t_j} \Bigr\}
\cdot (\pi y_j)^{ {(n-j) \nu_1}/{(n-1)} }
t_j^{ {n \nu_1}/{(2(n-1))} } \frac{dt_j}{t_j} \biggr]
\end{align*}
(\cite[Theorem 14]{ISt}).
Here $ \nu  = (\nu_1,\dotsc,\nu_n) \in \C^n $ satisfies $ \sum_{i=1}^n \nu_i = 0 $,
and $ \widetilde{\nu}  = (\widetilde{\nu}_1,\dotsc,\widetilde{\nu}_{n-1}) $ with
$ \widetilde{\nu}_i  = \nu_{i+1} + \nu_1/(n-1) $.
In view of the formula
\begin{align*}
W_{n,a}^A(\alpha[y_1,\dotsc,y_n])
& = (\det \alpha[y_1,\dotsc, y_n])^{|a|/n} W_{n,a}^A(\alpha[y_1,\dotsc, y_{n-1},
(y_1 y_2^2 \cdots y_{n-1}^{n-1})^{-1/n}]),
\end{align*}
we find, writing $ \nu_i  = a_i - |a|/n $ for $ 1\le i \le n $, that
\begin{align} \label{SLGL}
\widehat{W}_{n,a}^A(\alpha[y_1,\dotsc,y_{n-1},1])
& = \biggl[ \prod_{j=1}^{n-1} y_j^{ {j|a|}/{n} } \biggr]
\widehat{W}_{n,\nu}^{SL}(y_1,\dotsc, y_{n-1}).
\end{align}
From this, and the fact that $ |\widetilde{a}| = |a| $, we get
\begin{align*}
\widehat{W}_{n,a}^{A}(\alpha[y_1,\dotsc, y_{n-1},1 ])
& = \prod_{j=1}^{n-1} y_j^{ {j|a|}/{n} }
\int_{(\R_+)^{n-1}}
\prod_{j=1}^{n-2} \biggl( y_{j+1} \sqrt{ \frac{t_{j+1}}{t_j} } \biggr)^{- {j|a|}/{(n-1)} }
\\
& \times
\widehat{W}_{n-1,\widetilde{a}}^A
\left(\alpha\left[
y_2 \sqrt{\frac{t_2}{t_1}},\dotsc,y_{n-1} \sqrt{\frac{t_{n-1}}{t_{n-2}}},1 \right] \right)
\\
& \times
\biggl[ \prod_{j=1}^{n-1} \exp \Bigl\{ -(\pi y_j)^2 t_j - \frac{1}{t_j} \Bigr\}
\cdot (\pi y_j)^{ {(n-j)\nu_1}/{(n-1)} }
t_j^{ {n \nu_1}/{(2(n-1))} } \frac{dt_j}{t_j} \biggr].
\end{align*}
We can check that
\begin{align*}
& \biggl[ \prod_{j=1}^{n-1} y_j^{ {j|a|}/{n} } \biggr]
\biggl[ \prod_{j=1}^{n-2} \biggl( y_{j+1} \sqrt{ \frac{t_{j+1}}{t_j} } \biggr)^{- {j|a|}/{(n-1)} }
\biggr]
\biggl[ \prod_{j=1}^{n-1} (\pi y_j)^{ {(n-j)\nu_1}/{(n-1)} }
t_j^{ {n \nu_1}/{(2(n-1))}} \biggr]
\\
& =
\biggl[ \prod_{j=1}^{n-1} (\pi y_j)^{ {(n-j)a_1}/{(n-1)} }
t_j^{ {n a_1}/{(2(n-1))}} \biggr] \cdot (\pi t_{n-1})^{- {|a|}/{2} }.
\end{align*} So our recursive formula in \cite{ISt} yields the recursive formula in the statement of the present proposition, and
we are done.
\epf
%%%%%%% SO-Whittaker %%%%%%%%%%%%
\begin{prop} \cite[Theorem 3.1]{I08}\label{sowhit}
  For $ G=SO_{2n+1}(\R) $, we define a function
$$
W_{n,b}^B (\beta[y_1,\dotsc,y_n])
 = y^{\rho, B}\,
\widehat{W}_{n,b}^B(\beta[y_1,\dotsc,y_{n}]) $$
on $ Y $ by the recursive relation
\begin{align*}
& \widehat{W}_{n,b}^B (\beta[y_1,\dotsc,y_n])
\\
&
 = \int_{(\R_+)^{n}}
\int_{(\R_+)^{n-1}}
\widehat{W}_{n-1,\widetilde{b} }^{B}
\left( \beta\left[ y_2 \sqrt{\frac{t_2 u_2}{t_3 u_1}}, \dotsc,
y_{n-1} \sqrt{\frac{t_{n-1} u_{n-1}}{ t_n u_{n-2}}},
y_n \sqrt{ \frac{t_n}{u_{n-1}} } \, \right] \right)
\\
& \times
\biggl[ \prod_{j=1}^{n} \exp \biggl\{ -(\pi y_j)^2 t_j - \frac{1}{t_j} \biggr\}
\cdot (\pi y_j)^{b_n} \biggr]
\biggl[ \prod_{j=1}^{n-1}
\exp \biggl\{ -(\pi y_j)^2 \frac{t_j}{t_{j+1}} u_j - \frac{1}{u_j} \biggr\}
\cdot (t_{j+1} u_j)^{ {b_n}/{2}} \biggr]
\\
& \times t_1^{b_n}
\prod_{j=1}^{n-1} \frac{du_j}{u_j} \prod_{j=1}^{n} \frac{dt_j}{t_j}
\end{align*}for $n\ge2$, and
$ \widehat{W}_{1,(b_1)}^{B}(\beta[y_1])  = 2K_{b_1/2}(2\pi y_1) $.
If we extend the function $ W_{n,b}^B $ to $ G $ by
$ (\ref{radial}) $,
then $ W_{n,b}^B $ gives a class one Whittaker function on $G$.
\end{prop}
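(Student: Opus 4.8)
The plan is to recognize that this proposition is, essentially, the content of Theorem 3.1 of \cite{I08}, so that the real task reduces to two things: reconciling the normalizations and conventions of that reference with those fixed in Section 1.1, and checking the base case $n=1$. First I would match up conventions: that the class one Whittaker function meant here is the one produced by the Jacquet integral attached to the (unique up to scalar) $K$-fixed vector $v_0 \in \pi_b^B$, normalized by $W_{v_0}(e)=1$; that the $\rho$-shift is exactly $y^{\rho,B} = \prod_{j=1}^n y_j^{j(n-j/2)}$; and that the Borel character $\chi_b^B$ is parametrized as above, the $b_j$ entering through $\prod_j y_j^{b_1+\cdots+b_j}$. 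Once these identifications are made, the recursive relation in the statement is literally the recursion of \cite[Theorem 3.1]{I08}, with the passage from $\pi_b^B$ to $\pi_{\widetilde b}^B$ ($\widetilde b_j = b_j$) of the definition above.

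For the base case, when $n=1$ the $SO_3(\R)$ Whittaker function reduces to a $GL_2(\R)$ one (cf. the Introduction), the Jacquet integral collapses to the classical integral representation of the $K$-Bessel function, and one obtains $\widehat W_{1,(b_1)}^B(\beta[y_1]) = 2K_{b_1/2}(2\pi y_1)$ — in agreement, under $b_1 \leftrightarrow a_1 - a_2$, with the $n=2$ case of Proposition \ref{glwhit}. If instead one wanted a self-contained derivation of the recursion, the natural route is the one behind \cite{I08}: realize $SO_{2n-1}(\R)$ inside $SO_{2n+1}(\R)$ as (essentially) the $SO_{2n-1}$-factor of the Levi of the maximal parabolic $P$ stabilizing an isotropic line, whose opposite unipotent radical $\bar U$ has dimension $2n-1$; start from the Jacquet integral for $W_{n,b}^B$ over the full opposite unipotent, factor that unipotent as $\bar U \cdot \bar U'$ with $\bar U'$ the opposite unipotent of $SO_{2n-1}(\R)$, and carry out the integration over $\bar U$. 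The $2n-1$ coordinates on $\bar U$ split, in the explicit matrix model, into two families of sizes $n$ and $n-1$ (these become the $t_j$ and $u_j$); integrating the Gaussian-times-$\psi$ contributions produces the factors $\exp\{-(\pi y_j)^2 t_j - 1/t_j\}$ and $\exp\{-(\pi y_j)^2 (t_j/t_{j+1}) u_j - 1/u_j\}$, while what remains of the integral over the $SO_{2n-1}(\R)$-direction is, by the inductive hypothesis, $\widehat W_{n-1,\widetilde b}^B$ at the shifted arguments displayed in the statement — the $GL_1$-factor of the Levi contributing only the explicit powers of $y_j, t_j, u_j$ weighted by $b_n$, and nothing to the $SO_{2n-1}$-parameter, which is precisely why $\widetilde b_j = b_j$.

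The step I expect to be the main obstacle is purely the bookkeeping: collecting the contributions of $y^{\rho,B}$, of the character exponents $\prod_j y_j^{b_1+\cdots+b_j}$, and of the Jacobians of the nested substitutions $y_j \mapsto y_{j+1}\sqrt{t_{j+1}u_{j+1}/(t_{j+2}u_j)}$ (and the terminal $y_n \mapsto y_n\sqrt{t_n/u_{n-1}}$), and verifying that they assemble exactly into the stated integrand — the analogue of the exponent identity checked at the end of the proof of Proposition \ref{glwhit}, but made appreciably more intricate here by the two families of auxiliary variables. Since the result is already available in \cite{I08}, the cleanest path in the text is simply to invoke \cite[Theorem 3.1]{I08}, after recording the normalization identifications above and verifying the base case.
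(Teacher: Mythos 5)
Your approach matches the paper exactly: the paper gives no proof of this proposition, simply citing \cite[Theorem 3.1]{I08}, and your proposal correctly identifies that invoking that reference (after reconciling normalizations and the base case) is all that is required. The additional sketch of how the recursion would be derived via the Jacquet integral and the parabolic with $SO_{2n-1}$ Levi factor is accurate in spirit but not something the paper itself carries out.
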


For later use, we derive a new, additional recursive formula for $GL(n,\R)$ class one Whittaker functions.
%%%%%%%%%%%%%%%%%%%%%%%%%%%%%%%%%%%%%%%%%%%%%%%%
\begin{prop} \label{GLrec2} For $s\in\C$, define\begin{align}\Gamma_\R(s) =\pi^{-s/2} \Gamma(s/2).\end{align}
We then have the recurrence relation
\begin{align} \label{GLrec2int}
\begin{split}
\widehat{W}_{n,a}^{A}(\alpha[y_1,\dotsc,y_n])
& = \biggl[\prod_{j=2}^{n} \Gamma_{\R}(a_1-a_j+1) \biggr]
\cdot y_n^{|a|} \biggl[ \prod_{j=1}^{n-1} y_j^{- {ja_1}/{(n-1)}} \biggr]
\\
& \times \int_{\R^{n-1}} \widehat{W}_{n-1,\widetilde{a}}^A
\biggl( \alpha \biggl[ y_1 \frac{\sqrt{q_1 q_3}}{q_2} ,  y_{ 2} \frac{\sqrt{q_{ 2}q_4}}{q_3},\dotsc,
y_{n-1} \frac{\sqrt{q_{n-1}q_{n+1}}}{q_n}\biggr] \biggr)
\\
& \times \exp \biggl\{-2\pi i \biggl(
\sum_{j=1}^{n-1} \frac{x_j x_{j+1}}{q_{j+1}} y_j  \biggr) \biggr\}
\cdot q_1^{- {na_1}/{(2(n-1))}} \prod_{j=1}^{n-1} \frac{dx_j}{\sqrt{q_j}},
\end{split}
\end{align}
where
$$
q_j \equiv q_j(x_1,\dotsc,x_{n-1})  = 1 + \sum_{p=j}^{n-1} x_p^2 \quad \hbox{\rm for } 1 \le j \le n-1 ;\quad q_n=q_{n+1}=x_n =1.$$

\end{prop}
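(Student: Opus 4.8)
The plan is to derive the new recurrence in Proposition~\ref{GLrec2} from the integral representation already established in Proposition~\ref{glwhit}, essentially by applying a Fourier transform (in the off-diagonal coordinates of $X$) to the known ``$t$-variable'' recursion and recognizing the resulting Gaussian-type integrals as producing the $\Gamma_\R$-factors $\prod_{j=2}^n\Gamma_\R(a_1-a_j+1)$. Concretely, I would start from the relation in Proposition~\ref{glwhit} expressing $\widehat{W}_{n,a}^A(\alpha[y_1,\dots,y_{n-1},1])$ as an $(n-1)$-fold integral over $t_1,\dots,t_{n-1}\in\R_+$ of $\widehat{W}_{n-1,\widetilde a}^A$ against a product of factors $\exp\{-(\pi y_j)^2 t_j - 1/t_j\}(\pi y_j)^{(n-j)a_1/(n-1)}t_j^{na_1/(2(n-1))}$ times $(\pi t_{n-1})^{-|a|/2}$. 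The key idea is that each $t_j$-integral of the form $\int_0^\infty \exp\{-(\pi y_j)^2 t_j - 1/t_j\}\,t_j^{s}\,dt_j/t_j$ is, up to elementary factors, a $K$-Bessel function, and a $K$-Bessel function is itself a Gaussian integral over $\R$: namely $K_\mu(z)$ has an integral representation making $2 K_\mu(z)$ expressible (after the appropriate change of variables) as $\int_\R e^{-\text{(quadratic)}}$ times powers; this is precisely how one passes from the ``$t$-form'' to the ``$x$-form'' of these Whittaker recursions. So I would replace each $t_j$-integral by an integral over a new real variable $x_j$, collecting the Jacobians and the arising Gamma-functions.

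The main technical bookkeeping is to track the quadratic forms. After the substitution, the quantities $q_j=1+\sum_{p=j}^{n-1}x_p^2$ should emerge naturally as the ``partial norms'' that appear when one completes squares successively in $x_{n-1},x_{n-2},\dots,x_1$; the cross-terms $x_j x_{j+1}/q_{j+1}$ in the exponential $\exp\{-2\pi i\sum_j x_jx_{j+1}y_j/q_{j+1}\}$ are exactly the residual linear (imaginary) terms left after each completion of the square, and the shifted arguments $y_j\sqrt{q_j q_{j+2}}/q_{j+1}$ of $\widehat W_{n-1,\widetilde a}^A$ are what the original arguments $y_{j+1}\sqrt{t_{j+1}/t_j}$ become under the change of variables. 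I would verify this by induction on the index of integration, or more efficiently by directly matching against the Jacquet-integral / Fourier-transform description of $GL(n,\R)$ Whittaker functions from which both forms ultimately descend. The factor $y_n^{|a|}\prod_{j=1}^{n-1}y_j^{-ja_1/(n-1)}$ in front accounts for the shift between $\widehat W^A_{n,a}(\alpha[y_1,\dots,y_n])$ and $\widehat W^A_{n,a}(\alpha[y_1,\dots,y_{n-1},1])$ (using $\widehat W^A_{n,a}(\alpha[y_1,\dots,y_n])=y_n^{|a|}\widehat W_{n,a}(\alpha[y_1,\dots,y_{n-1},1])$ from Proposition~\ref{glwhit}) together with the reindexing of the $(\pi y_j)^{(n-j)a_1/(n-1)}$ powers; I would simply carry this prefactor along and confirm the exponents at the end.

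The step I expect to be the genuine obstacle is establishing that the combinatorics of the iterated square-completions produces \emph{exactly} the stated $q_j$'s (with the boundary conventions $q_n=q_{n+1}=x_n=1$) and exactly the cross-term structure $\sum_{j=1}^{n-1}x_jx_{j+1}y_j/q_{j+1}$, uniformly in $n$ --- i.e., proving the pattern rather than checking small cases. I would handle this by an induction in which the inductive hypothesis is precisely formula~(\ref{GLrec2int}) for $n-1$, feeding Proposition~\ref{glwhit}'s recursion into it: one inner layer of $x$-variables is introduced, the innermost $t$- or $x$-integral is evaluated to yield one new $\Gamma_\R(a_1-a_j+1)$ factor, and the remaining integrals are matched to the $(n-1)$-level statement with parameters $\widetilde a$ (noting $|\widetilde a|=|a|$ and $\widetilde a_1 = a_2 + a_1/(n-1)$, so that $a_1-a_j+1$ for the $GL_n$ level becomes $\widetilde a_1 - \widetilde a_{j-1}+1$ at the $GL_{n-1}$ level, giving the telescoping product $\prod_{j=2}^n$). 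The one routine-but-delicate point inside this is the precise normalization of the Gaussian/$K$-Bessel identity so that the constant comes out as $\Gamma_\R(a_1-a_j+1)=\pi^{-(a_1-a_j+1)/2}\Gamma((a_1-a_j+1)/2)$ with the correct power of $\pi$; this is pinned down by checking the base case $n=2$ against $\widehat W_{2,(a_1,a_2)}^A(\alpha[y_1,1])=2y_1^{(a_1+a_2)/2}K_{(a_1-a_2)/2}(2\pi y_1)$.
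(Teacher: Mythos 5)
Your primary route — starting from Proposition~\ref{glwhit}'s $t$-variable recursion and converting each $t_j$-integral to an $x_j$-integral via a $K$-Bessel/Gaussian correspondence — does not go through as described, for two structural reasons. First, the $t_j$-integrals in Proposition~\ref{glwhit} are not of the decoupled Bessel form $\int_0^\infty \exp\{-(\pi y_j)^2 t_j - 1/t_j\}\,t_j^s\,dt_j/t_j$: the inner Whittaker function carries the argument $\alpha[y_2\sqrt{t_2/t_1},\dotsc,y_{n-1}\sqrt{t_{n-1}/t_{n-2}},1]$, so every $t_j$ is coupled through $\widehat W_{n-1,\widetilde a}^A$ to its neighbors and cannot be integrated out one at a time as a pure $K$-Bessel integral. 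Second, the two recursions do not even have the same dependence pattern: in Proposition~\ref{glwhit} the $j$-th slot of $\widehat W_{n-1,\widetilde a}^A$ involves only $t_j,t_{j+1}$ and the index $y_{j+1}$, whereas in~(\ref{GLrec2int}) the $j$-th slot is $y_j\sqrt{q_j q_{j+2}}/q_{j+1}$ with $q_j = 1+\sum_{p\ge j}x_p^2$, i.e.\ it depends on \emph{all} of $x_j,\dotsc,x_{n-1}$ and uses $y_j$, not $y_{j+1}$. A variable-by-variable swap $t_j\mapsto x_j$ with a completion of the square cannot create this cascading ``global'' coupling; one would need a triangular change of variables built into the very definition of the $x_j$'s, and that is no longer a substitution inside Proposition~\ref{glwhit} but a different computation altogether.

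What you mention only as a fallback --- ``directly matching against the Jacquet-integral description'' --- is in fact the paper's actual proof, and it sidesteps both obstacles. The paper never touches Proposition~\ref{glwhit}: it begins from Jacquet's integral $J_{n,a}(g)=\int_X H_{n,a}(w_n x g)\psi^{-1}(x)\,dx$ and the closed form $H_{n,a}(w_n x)=\prod_m \Delta_{n,m}(x)^{(a_{n-m+1}-a_{n-m}-1)/2}$ from~\cite{St90}. The technical core is a Claim giving a triangular change of coordinates $x\mapsto(x'_{j,k};\,x_1,\dotsc,x_{n-1})$ under which $H_{n,a}(w_n x)$ factors as $\prod_m(q_1/q_{m+1})^{\cdots}\cdot H_{n-1,\widetilde a}(w_{n-1}x')$ --- this is exactly where the $q_j$'s and their global structure are forced on you. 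Integrating out the inner block $x'_{j,k}$ then produces $\widehat J_{n-1,\widetilde a}$, and the passage from $J$ to $W$ via $W_{n,a}^A=\bigl[\prod_{j<k}\Gamma_\R(a_j-a_k+1)\bigr]J_{n,a}$ yields the factor $\prod_{j=2}^n\Gamma_\R(a_1-a_j+1)$ as a quotient of two such Gamma products, not as a Gaussian-integral evaluation as you anticipated. If you want to rescue your write-up you should promote that fallback to the main argument; as it stands, the proposed Bessel-to-Gaussian transformation of Proposition~\ref{glwhit} is a genuine gap.
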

%%%%%%%%%%%%%%%%%%%%%%%%%%%%%%%%%%%%%%%%%%%%%%%%
\bpf
To prove the relation (\ref{GLrec2int}), we recall the original integral representation
of class one $GL(n,\R)$ Whittaker functions, introduced by Jacquet \cite{Ja1}:
\begin{align*}
J_{n,a}(g) = \int_X H_{n,a} (w_n xg) \psi^{-1}(x) \, dx, \ \ \
w_n = \begin{pmatrix}
& & 1 \\ & \rotatebox[origin=c]{45}{$\cdots$} & \\ 1 & &
\end{pmatrix},
g \in GL_n(\R).
\end{align*}
Here $ H_{n,a}(g)  = \chi_{a}(y(g)) = y^{\rho, A}\prod_{j=1}^{n} y_j^{a_1+\cdots+a_j } $
if $ y(g) = \alpha[y_1,\dotsc, y_n] $.
Then, as is shown in \cite{St90}, $J_{n,a}(g)$ is related to the class one Whittaker function $ W_{n,a}^A  $ defined above by the formula
\begin{align} \label{WvsJ}
W_{n,a}^A(g)
= \biggl[ \prod_{1 \le j<k \le n} \Gamma_{\R}(a_j-a_k+1) \biggr]
J_{n,a}(g).
\end{align}
For $x\in X$ as in the Jacquet integral, let use write $ x  = (x_{j,k})_{1 \le j,k \le n}$.
As in \cite[\S 3]{St90}, the substitutions
$ x_{j,k} \to (\prod_{p=j}^{k-1} y_p) x_{j,k} $ $ (1\le j<k \le n) $ imply that
%\begin{align*}
% \chi_{a}(w_nx \alpha[y_1,\dotsc, y_n])
%& = \chi_{a} ( \alpha[y_{n-1}^{-1}, y_{n-2}^{-1}, \dotsc, y_1^{-1}, y_1\cdots y_n] )
% \cdot \chi_a^A (y(w_n x))
%\\
%& = \biggl[\prod_{j=1}^n y_j^{-(a_1+\cdots+a_{n-j})+|a|-j(n-j)/2 } \biggr] \chi_a^A(y(w_nx)).
%\end{align*}
$$
J_{n,a}(\alpha[y_1,\dotsc, y_n]) = y^{\rho,A} \widehat{J}_{n,a}(\alpha[y_1,\dotsc,y_n]),
$$
where
\begin{align} \label{Jac}
\begin{split}
\widehat{J}_{n,a}(\alpha[y_1,\dotsc,y_n])
&  = y_n^{|a|} \biggl[ \prod_{j=1}^{n-1} y_j^{-(a_1+a_2+\cdots+a_{n-j})+|a|} \biggr]
\\
& \times \int_{\R^{n(n-1)/2}} H_{n,a}(w_n x)
\cdot \exp \biggl(-2\pi i \sum_{j=1}^{n-1} x_{j,j+1} y_j \biggr)
\prod_{1 \le j<k \le n} dx_{j,k}.
\end{split}
\end{align}

The following explicit formula for $ H_{n,a}(w_nx) $ given in \cite[Appendix]{St90}:
\begin{align} \label{H}
H_{n,a}(w_nx) = \prod_{m=1}^{n-1} (\Delta_{n,m} (x))^{ (a_{n-m+1} - a_{n-m} -1)/2},
\end{align}
where $ \Delta_{n,m} (x) $ is the
sum of the squares of the $ m \times m $ subdeterminants of the $ m \times n $ matrix formed from 
the top $ m $ rows of $x$.
Using this formula, we will modify the integral (\ref{Jac}) to prove our proposition.
Roughly speaking, the idea here is that that, if we integrate the integrand in  (\ref{Jac}) over those $x_{j,k}$ with
  $ 1\le j<k \le n-1 $ only, we are left with a certain integral involving $ \widehat{J}_{n-1,\widetilde{a}} $.

We first relate $ H_{n,a} $ to $ H_{n-1,\widetilde{a}} $.

\medskip

\noindent
%%%%%%%%%%%%%%
{\bf Claim.} {\it For $ x = (x_{j,k})_{1 \le j,k \le n} \in X $,
let introduce new variables
$ x_j $ $ (1 \le j \le n-1) $, and   an $(n-1)\times(n-1)$ upper triangular unipotent matrix
$ x' = (x_{j,k}')_{1\le j,k \le n-1} $, by the relations
\begin{align*}
x_{j,n} = \sum_{\ell=j}^{n-1} x_{j,\ell} x_\ell \ \ \ (1 \le j \le n-1),
\end{align*} and
\begin{align*}
x_{j,k} = \sqrt{ \frac{q_j q_{k+1}}{q_{j+1} q_k}} \biggl( x_{j,k}'
- \sum_{\ell=j}^{k-1} x_\ell x_k \sqrt{\frac{q_k}{q_\ell q_{\ell+1} q_{k+1}} } x_{j,\ell}' \biggr)
\ \ \ (1 \le j<k \le n-1).
\end{align*}
%where we agree that $ x_{\ell,\ell} = x_{\ell,\ell}' = 1 $, and $ x_{j,k}' = 0 $ for $ j > k $.
%Here
%$$ q_j  = 1 + \sum_{\ell=j}^{n-1} x_\ell^2 \ \ \ (1 \le j \le n). $$
%Then we have
%\begin{align*}
% \Delta_{n,m}( x ) = \frac{q_1}{q_{m+1}}
% \Delta_{n-1,m}( x'),
%\end{align*}
%and
Then we have
\begin{align*}
H_{n,a}(w_n x) = \biggl[ \prod_{m=1}^{n-1}
\biggl( \frac{q_1}{q_{m+1}} \biggr)^{(a_{n-m+1}-a_{n-m}-1)/2} \biggr]
H_{n-1,\widetilde{a}} (w_{n-1} x').
\end{align*} }
%%%%%%%%%%%%%
\medskip

{\it Proof of Claim.}
We can write $ x = \widetilde{x} \kappa $, with
\begin{align*}
\widetilde{x} = \begin{pmatrix} 0 & x'' \\ q_1^{-1/2} & \overline{x} \end{pmatrix},
\ \ \
\kappa = \begin{pmatrix} \overline{\kappa} & q_1^{-1/2} \\ \kappa' & {}^t \overline{x} 
\end{pmatrix}.
\end{align*}
Here
\begin{align*}
x''& = \bigl( (q_j/q_{j+1})^{1/2} x_{j,k}' \bigr)_{1 \le j,k \le n-1},
\\
\overline{x} & = \Bigl( \frac{x_1}{\sqrt{q_1 q_2}}, \frac{x_2}{\sqrt{q_2q_3}},
\dotsc, \frac{x_{n-2}}{\sqrt{q_{n-2}q_{n-1}}}, \frac{x_{n-1}}{\sqrt{q_{n-1}}} \Bigr),
\\
\overline{\kappa}
&= \Bigl( -\frac{x_1}{\sqrt{q_1}}, -\frac{x_2}{\sqrt{q_1}}, \dotsc, -\frac{x_{n-1}}{\sqrt{q_1}} \Bigr),
\end{align*}
and $ \kappa' = (\kappa_{j,k}')_{1 \le j,k \le n-1} $
with
\begin{align*}
\kappa_{j,k}'
= \begin{cases}
(q_{j+1}/q_j)^{1/2} & \mbox{ if } 1 \le j=k \le n-1, \\
-(q_j q_{j+1})^{-1/2} x_j x_k & \mbox{ if } 1 \le j<k \le n-1, \\
0 & \mbox{ otherwise. }
\end{cases}
\end{align*}
We check that $ \kappa \in K $.  Then, since $ H_a $ is right $K$-invariant, we have
\begin{align*}
H_{n,a}(w_nx)
& = H_{n,a}(w_n \widetilde{x}) \\
& = \prod_{m=1}^{n-1}
( \Delta_{n,m}(\widetilde{x}) )^{(a_{n-m+1}-a_{n-m}-1)/2} \\
& = \prod_{m=1}^{n-1}
\biggl( \frac{q_1}{q_{m+1}} \cdot \Delta_{n-1,m}(x') \biggr)^{\!(a_{n-m+1}-a_{n-m}-1)/2}
\\
& = \biggl[ \prod_{m=1}^{n-1} \biggl( \frac{q_1}{q_{m+1}} \biggr)^{\! (a_{n-m+1}-a_{n-m}-1)/2} \biggr]
H_{n-1,\widetilde{a}} (w_{n-1} x').
\end{align*}
Here we used (\ref{H}) and the relation
$ \widetilde{a}_{n-m+1}-\widetilde{a}_{n-m} = a_{n-m+1}-a_{n-m} $.
This proves the claim.

\medskip

Returning to the integral (\ref{Jac}), we change variables from  $ x_{j,k} $  ($1\le j<k\le n$) to  
$  x_{j,k}' $ ($1\le j<k\le n-1$) and  $ x_j $  ($1\le j\le n-1$); 
we further substitute $ x_j \to (-1)^{n-j-1} x_j $ ($1\le j\le n-1$).
Then we get
\begin{align*}
\widehat{J}_{n,a}(\alpha[y_1,\dotsc,y_n])
& = y_n^{|a|} \biggl[ \prod_{j=1}^{n-1} y_j^{-(a_1+\cdots+a_{n-j})+|a|} \biggr]
\\
& \times \int_{\R^{n(n-1)/2}}
H_{\widetilde{a}}(w_{n-1} x)
\cdot \exp \biggl( -2\pi i \sum_{j=1}^{n-2} \frac{ \sqrt{q_j q_{j+2}} }{q_{j+1}}
x_{j,j+1}' y_j \biggr)
\\
& \times \exp \biggl\{ -2\pi i \biggl( \sum_{j=1}^{n-1} \frac{x_j x_{j+1}}{q_{j+1}} y_j
  \biggr) \biggr\}
\\
& \times
\biggl[ \prod_{j=1}^{n-1} \biggl( \frac{q_1}{q_{j+1}} \biggr)^{(a_{n-j+1}-a_{n-j}-1)/2} \biggr]
\biggl[ \prod_{1 \le j<k \le n-1} \sqrt{ \frac{q_j q_{k+1}}{q_{j+1} q_k} } \, \biggr]
\\
& \times
\prod_{1 \le j<k \le n-1} dx_{j,k}' \prod_{j=1}^{n-1} dx_j.
\end{align*}
Now we can use (\ref{Jac}) to integrate with respect to $ x_{j,k}' $; we find that
\begin{align*}
\widehat{J}_{n,a}(\alpha[y_1,\dotsc,y_n])
& = y_n^{|a|} \biggl[ \prod_{j=1}^{n-1} y_j^{-(a_1+\cdots+a_{n-j})+|a|} \biggr]
\cdot y_{n-1}^{-|a|}
\biggl[ \prod_{j=1}^{n-2} y_j^{(\widetilde{a}_1+\cdots+\widetilde{a}_{n-j-1})-|a|} \biggr]
\\
& \times \int_{\R^{n-1}}
\widehat{J}_{n-1,\widetilde{a}} \biggl(\alpha \biggl[ y_1 \frac{\sqrt{q_1 q_3}}{q_2}, 
y_{ 2}\frac{\sqrt{q_{ 2}q_4}}{q_{3}},\dotsc, y_{ n-1}\frac{\sqrt{q_{ n-1}q_{n+1}}}{q_{n}} \biggr] \biggr)
\\
& \times \exp \biggl\{ -2\pi i \biggl( \sum_{j=1}^{n-1} \frac{x_j x_{j+1}}{q_{j+1}} y_j \biggr) \biggr\}
\\
& \times
\biggl[ \prod_{j=1}^{n-1} \biggl( \frac{q_1}{q_{j+1}} \biggr)^{(a_{n-j+1}-a_{n-j}-1)/2} \biggr]
\biggl[ \prod_{1 \le j<k \le n-1} \sqrt{ \frac{q_j q_{k+1}}{q_{j+1} q_k} } \,\biggr]
\\
& \times
q_{n-1}^{-\frac{|a|}{2}}
\biggl[ \prod_{j=1}^{n-2} \biggl( \frac{\sqrt{q_j q_{j+2}} }{q_{j+1}}
\biggr)^{(\widetilde{a}_1+\cdots+\widetilde{a}_{n-j-1})-|a| } \biggr]
\prod_{j=1}^{n-1} dx_j.
\end{align*}
By collecting the powers of $ y_j $ and $ q_j $, we arrive at
\begin{align*}
& \widehat{J}_{n,a}(\alpha[y_1,\dotsc,y_n])
\\
& = y_n^{|a|} \biggl[ \prod_{j=1}^{n-1} y_j^{- {ja_1}/{(n-1)}} \biggr]
\int_{\R^{n-1}}
\widehat{J}_{n-1,\widetilde{a}} \biggl( \alpha \biggl[ y_1 \frac{\sqrt{q_1 q_3}}{q_2}, 
y_{2} \frac{\sqrt{q_{2}q_4}}{q_{3}},\dotsc, y_{n-1}\frac{ \sqrt{q_{n-1} q_{n+1}}}{q_n} \biggr] \biggr)
\\
& \times \exp \biggl\{ -2\pi i \biggl( \sum_{j=1}^{n-1} \frac{x_j x_{j+1}}{q_{j+1}} y_j
 \biggr) \biggr\}
\cdot q_1^{- {na_1}/{(2(n-1))}-1/2} \prod_{j=2}^{n-1} q_j^{-1/2}
\prod_{j=1}^{n-1} dx_j.
\end{align*}
The relation (\ref{WvsJ}) then completes the proof of this proposition.
\epf

%%%%%%%%%%%%%%%%%%%%%%%%%%%%%%%%%%%%%%%%%%%%%%%%%%%%%%%%%%%%%%%%%%%%%%%%%%
\subsection{Mellin transforms of  class one Whittaker functions}
%%%%%%%%%%%%%%%%%%%%%%%%%%%%%%%%%%%%%%%%%%%%%%%%%%%%%%%%%%%%%%%%%%%%%%%%%%
Here we recall certain  {\it Mellin-Barnes-type} representations of class one Whittaker functions.  By such a representation we mean, essentially, an  integral (perhaps multifold) over vertical lines in the complex plane, the integrand being a rational expression in Gamma functions times a product of powers of the independent variables in question, and the paths of integration being indented, if necessary, to separate increasing and decreasing sequences of poles.

For complex numbers $ s_j $,
let $ U_{n,a}(s_1,\dotsc,s_{n-1}) $ and
$ V_{n,b}(s_1,\dotsc,s_n) $ be Mellin transforms of
$ \widehat{W}_{n,a}^A(y) $ and $ \widehat{W}_{n,b}^B(y) $  respectively:
\begin{align*}
U_{n,a}(s_1,\dotsc,s_{n-1})
= \int_{(\R_+)^{n-1}} \widehat{W}_{n,a}^A(\alpha[y_1,\dotsc,y_{n-1},1])
\biggl[ \prod_{j=1}^{n-1} y_j^{s_j} \frac{dy_j}{y_j} \biggr],
\end{align*}
and
\begin{align*}
V_{n,b}(s_1,\dotsc,s_n)
= \int_{(\R_+)^n} \widehat{W}_{n,b}^B(\beta[y_1,\dotsc,y_n])
\biggl[ \prod_{j=1}^{n} y_j^{s_j} \frac{dy_j}{y_j} \biggr].
\end{align*}
Then Mellin inversion, and the definitions of $  {W}_{n,a}^A(y) $ and $  {W}_{n,b}^B(y) $ in terms of $ \widehat{W}_{n,a}^A(y) $ and $ \widehat{W}_{n,b}^B(y) $ (cf. Propositions \ref{glwhit} and \ref{sowhit}),  imply that
\begin{align*}
W_{n,a}^A(\alpha[y_1,\dotsc, y_n])
& = y_n^{|a|} \cdot
\frac{y^{\rho,A} }{(2\pi i)^{n-1}} \int_{s_1,\dotsc,s_{n-1}}
U_{n,a}(s_1,\dotsc,s_{n-1})
\biggl[ \prod_{j=1}^{n-1} y_j^{-s_j} ds_j \biggr],
\\
W_{n,b}^B(\beta[y_1,\dotsc,y_n])
& = \frac{y^{\rho, B} }{(2\pi i)^{n}} \int_{s_1,\dotsc,s_n}
V_{n,b}(s_1,\dotsc, s_{n})
\biggl[ \prod_{j=1}^{n} y_j^{-s_j} ds_j \biggr].
\end{align*}
We now recall recursive relations for the Mellin transforms $ U_{n,a} $ and $ V_{n,b} $.

%%%%%%%%%%% GL Mellin-Barnes %%%%%
\begin{prop} \cite[Theorem 12]{ISt} \label{MBGL}
We have
\begin{align*}
U_{n,a}(s_1,\dotsc,s_{n-1})
&= \frac{2^{-1} }{(4\pi i)^{n-2}} \int_{z_1,\dotsc,z_{n-2}}
U_{n-1, \widetilde{a} }(z_1,\dotsc,z_{n-2})
\\
& \times
\biggl[ \prod_{j=1}^{n-1} \Gamma_{\R} \Bigl( s_j-z_j- \frac{ja_1}{n-1} \Bigr)
\Gamma_{\R} \Bigl( s_j-z_{j-1} + \frac{(n-j)a_1}{n-1} \Bigr) \biggr]
\\
& \times
dz_1 \cdots dz_{n-2},
\end{align*}
where we understand that $ z_0 = 0 $ and $ z_{n-1} = -|a| $.
Note that $ U_{2,(a_1,a_2)}(s) = 2^{-1} \Gamma_{\R}(s+a_1)\Gamma_{\R}(s+a_2) $.
\end{prop}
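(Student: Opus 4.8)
The plan is to derive the recursion by applying the Mellin transform, in the variables $y_1,\dots,y_{n-1}$, to the recursive integral representation of $\widehat W_{n,a}^A$ supplied by Proposition~\ref{glwhit}, and to dispose of the base case $n=2$ by a classical evaluation. For the base case, Proposition~\ref{glwhit} gives $\widehat W_{2,(a_1,a_2)}^A(\alpha[y_1,1])=2y_1^{(a_1+a_2)/2}K_{(a_1-a_2)/2}(2\pi y_1)$, whence $U_{2,(a_1,a_2)}(s)=2\int_0^\infty K_{(a_1-a_2)/2}(2\pi y_1)\,y_1^{\,s+(a_1+a_2)/2}\,\tfrac{dy_1}{y_1}$; substituting $y_1\mapsto y_1/(2\pi)$ and invoking $\int_0^\infty K_\mu(y)\,y^{w}\,\tfrac{dy}{y}=2^{w-2}\Gamma\bigl(\tfrac{w+\mu}{2}\bigr)\Gamma\bigl(\tfrac{w-\mu}{2}\bigr)$ (valid for $\Re w>|\Re\mu|$, cf.~\cite{WW}), then collecting powers of $2$ and $\pi$ and recalling $\Gamma_{\R}(s)=\pi^{-s/2}\Gamma(s/2)$, one obtains $U_{2,(a_1,a_2)}(s)=2^{-1}\Gamma_{\R}(s+a_1)\Gamma_{\R}(s+a_2)$; the full identity follows by analytic continuation.

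For the recursion ($n\ge3$) I would start from the definition of $U_{n,a}$, insert the recursive formula of Proposition~\ref{glwhit} for $\widehat W_{n,a}^A(\alpha[y_1,\dots,y_{n-1},1])$, and then rewrite the factor $\widehat W_{n-1,\widetilde a}^A$ appearing in the integrand by Mellin inversion:
\[
\widehat W_{n-1,\widetilde a}^A(\alpha[w_1,\dots,w_{n-2},1])=\frac{1}{(2\pi i)^{n-2}}\int_{z_1,\dots,z_{n-2}}U_{n-1,\widetilde a}(z_1,\dots,z_{n-2})\prod_{j=1}^{n-2}w_j^{-z_j}\,dz_j,
\]
applied at $w_j=y_{j+1}\sqrt{t_{j+1}/t_j}$. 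Taking $\Re(s_1),\dots,\Re(s_{n-1})$ large and placing the $z_j$-contours along appropriate vertical lines, one verifies absolute convergence and uses Fubini to carry out the $y_j$- and $t_j$-integrals ahead of the $z_j$-integrals.

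The inner integrals are elementary. For each $j$, once the Barnes representation is in place the $y_j$-dependence is $\exp\{-(\pi y_j)^2t_j\}$ times a single power $y_j^{c_j}$, with $c_j=s_j-z_{j-1}+\tfrac{(n-j)a_1}{n-1}$ linear in $s_j,z_{j-1},a_1$ and free of the $t$'s; the substitution $u=(\pi y_j)^2t_j$ gives $\int_0^\infty\exp\{-(\pi y_j)^2t_j\}\,y_j^{c_j}\,\tfrac{dy_j}{y_j}=\tfrac12\pi^{-c_j}t_j^{-c_j/2}\Gamma(c_j/2)$, and this $\Gamma(c_j/2)$ is, up to a power of $\pi$, the factor $\Gamma_{\R}\bigl(s_j-z_{j-1}+\tfrac{(n-j)a_1}{n-1}\bigr)$. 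After all the $y_j$-integrals have been done, the $t_j$-dependence is $\exp\{-1/t_j\}$ times a single power $t_j^{d_j}$, whose exponent $d_j$ is assembled from the weight $t_j^{na_1/(2(n-1))}$ (plus the extra $t_{n-1}^{-|a|/2}$ when $j=n-1$) in Proposition~\ref{glwhit}, from the powers of $t_j$ split off from $\prod_k w_k^{-z_k}$, and from the $t_j^{-c_j/2}$ just produced; then $\int_0^\infty\exp\{-1/t_j\}\,t_j^{d_j}\,\tfrac{dt_j}{t_j}=\Gamma(-d_j)$, and a short computation gives $-d_j=\tfrac12\bigl(s_j-z_j-\tfrac{ja_1}{n-1}\bigr)$, so that $\Gamma(-d_j)$ is, up to a power of $\pi$, the factor $\Gamma_{\R}\bigl(s_j-z_j-\tfrac{ja_1}{n-1}\bigr)$. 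It then remains to gather the stray powers of $\pi$ — those from $\pi^{-c_j}$, from the weights $(\pi y_j)^{(n-j)a_1/(n-1)}$, and from $(\pi t_{n-1})^{-|a|/2}$ — and to check, using $\sum_{j=1}^{n-1}(n-2j)=0$ and the telescoping $\sum_{j=1}^{n-1}(z_{j-1}-z_j)=z_0-z_{n-1}$, that they match exactly the powers of $\pi$ dictated by the two $\Gamma_{\R}$'s; this forces the conventions $z_0=0$ and $z_{n-1}=-|a|$ (the latter being consistent precisely because $|\widetilde a|=|a|$). Finally the $n-1$ factors of $\tfrac12$ combine with the $(2\pi i)^{-(n-2)}$ from Mellin inversion into the prefactor $\dfrac{2^{-1}}{(4\pi i)^{n-2}}$, and analytic continuation removes the hypothesis $\Re(s_j)\gg0$.

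I expect the principal obstacle to be the rigorous justification of the Fubini interchange: one must exhibit a nonempty region of $(s_1,\dots,s_{n-1})$ and a placement of the $z_j$-contours for which all the iterated integrals converge absolutely — concretely, one needs $\Re\bigl(s_j-z_{j-1}+\tfrac{(n-j)a_1}{n-1}\bigr)>0$ and $\Re\bigl(s_j-z_j-\tfrac{ja_1}{n-1}\bigr)>0$ for every $j$ at once, together with the exponential decay, along vertical lines, of $U_{n-1,\widetilde a}$ and of the $\Gamma$-factors produced by the $y_j$- and $t_j$-integrations. Once these convergence matters are settled, the rest is routine manipulation of Gamma integrals and powers.
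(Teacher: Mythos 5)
Your proof is correct, but it takes a genuinely different route from the paper. The paper disposes of the proposition in three lines: it cites \cite[Theorem 12]{ISt}, which gives the Mellin-transform recursion for the $SL_n$ quantity $U_{n,\nu}^{SL}$, and then observes that $U_{n,a}(s_1,\dotsc,s_{n-1})=U_{n,\nu}^{SL}(s_1+|a|/n,\dotsc,s_{n-1}+(n-1)|a|/n)$ by virtue of relation (\ref{SLGL}), so that the $GL_n$ statement follows by a shift of variables. You instead re-derive the Mellin recursion from scratch, starting from the \emph{integral} recursion of Proposition~\ref{glwhit} (which is itself the $GL_n$ translation of \cite[Theorem~14]{ISt}), inserting Mellin inversion for the $\widehat W_{n-1,\widetilde a}^A$ factor in the integrand, and then evaluating each $y_j$-integral (as a Gamma integral after $u=(\pi y_j)^2t_j$) and each $t_j$-integral (as a Gamma integral after $v=1/t_j$), matching powers of $\pi$ and $2$ at the end. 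I checked your bookkeeping: the exponents $c_j=s_j-z_{j-1}+\tfrac{(n-j)a_1}{n-1}$ and $-d_j=\tfrac12\bigl(s_j-z_j-\tfrac{ja_1}{n-1}\bigr)$ come out as you say (the latter telescoping correctly with the conventions $z_0=0$, $z_{n-1}=-|a|$ and using $|\widetilde a|=|a|$), the stray powers of $\pi$ cancel identically, and the $(n-1)$ factors of $\tfrac12$ combine with $(2\pi i)^{-(n-2)}$ to give $\tfrac{2^{-1}}{(4\pi i)^{n-2}}$; your $n=2$ base-case computation via the Mellin transform of the $K$-Bessel function is also correct. The paper's approach buys brevity by delegating the heavy lifting to \cite{ISt}; yours buys self-containedness, since it in effect re-proves the cited theorem in the $GL_n$ normalization directly from the integral representation, at the cost of the Fubini justification you rightly flag as the only non-routine step.
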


\bpf
We use the same notation as in the proof of Proposition \ref{glwhit}, and
set
$$
U_{n, {\nu}}^{SL} (s_1,\dotsc, s_{n-1})
= \int_{(\R_+)^{n-1}} \widehat{W}_{n,\nu}^{SL}(y_1,\dotsc,y_{n-1})
\biggl[ \prod_{j=1}^{n-1} y_j^{s_j} \frac{dy_j}{y_j} \biggr].
$$
Then \cite[Theorem 12]{ISt} asserts that
\begin{align*}
U_{n,\nu}^{SL} (s_1,\dotsc,s_{n-1})
& = \frac{2^{-1}}{(4\pi i)^{n-2}} \int_{z_1,\dotsc,z_{n-2}}
U_{n,\widetilde{\nu}}^{SL}(z_1,\dotsc,z_{n-2})
\\
& \times
\biggl[ \prod_{j=1}^{n-1} \Gamma_{\R} \Bigl( s_j-z_j- \frac{j \nu_1}{n-1} \Bigr)
\Gamma_{\R} \Bigl( s_j-z_{j-1} + \frac{(n-j) \nu_1}{n-1} \Bigr) \biggr]
\\
& \times
dz_1 \cdots dz_{n-2},
\end{align*}
with $ z_0 = z_{n-1} = 0 $.
This, together with the formula
\begin{align*}
U_{n,a}(s_1,\dotsc, s_{n-1})
= U_{n,\nu}^{SL} \Bigl(s_1+\frac{|a|}{n}, s_2+\frac{2|a|}{n}, \dotsc,
s_{n-1} + \frac{(n-1)|a|}{n} \Bigr),
\end{align*}which follows immediately from  (\ref{SLGL}), then yield the desired result.
\epf

\medskip

For some investigations below in the case $G'=GL_{n-2}$, we present the following result concerning the  Whittaker function contragredient to $ W_{n,a} $.

%%%%%%% contra %%%%%%%%%%%%
\begin{cor} \label{GLcont}
For a $\psi$-Whittaker function $W$ on $ GL_n(\R) $, we put
$$ W^{\vee}(g) = W(w_n {}^t g^{-1}).
$$
Then we have
$$ (W_{n,a}^A)^{\vee}(\alpha[y_1,\dotsc,y_n]) = W_{n,-a}^A(\alpha[y_1,\dotsc,y_n]) $$
and
$ (W_{n,a}^A)^{\vee} \in \mathcal{W}(\pi_{-a}^A, \psi^{-1}) $.
\end{cor}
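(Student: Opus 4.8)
The plan is to exploit the known explicit formula (\ref{WvsJ}) relating $W_{n,a}^A$ to the Jacquet integral $J_{n,a}$, and then analyze how the contragredient operation $g\mapsto w_n\,{}^tg^{-1}$ interacts with $J_{n,a}$. First I would observe that the space $\mathcal{W}(\pi_{-a}^A,\psi^{-1})$ claim is essentially formal: if $W$ satisfies $W(xg)=\psi(x)W(g)$ for $x\in X$, then for $W^\vee(g)=W(w_n\,{}^tg^{-1})$ one computes $W^\vee(xg)=W(w_n\,{}^tg^{-1}\,{}^tx^{-1})$, and since $w_n\,{}^tx^{-1}w_n^{-1}$ is again in $X$ with the superdiagonal entries negated, this produces the factor $\psi^{-1}(x)$; the fact that $W^\vee$ transforms under $\pi_{-a}^A$ follows because conjugation by $w_n$ combined with inverse-transpose sends the character $\chi_a^A$ of $H$ to $\chi_{-a}^A$ (the Weyl element $w_n$ reverses the order of the torus coordinates, which negates the $\rho$-shift's effect in exactly the way that turns $a$ into $-a$). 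So the substantive content is the identity of radial functions.

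For that, the cleanest route is to use the integral representation itself. Starting from Jacquet's integral $J_{n,a}(g)=\int_X H_{n,a}(w_nxg)\psi^{-1}(x)\,dx$, I would compute $J_{n,a}(w_n\,{}^tg^{-1})$ by the change of variables $x\mapsto$ (some reflection of $x$), using the right $K$-invariance of $H_{n,a}$ and the explicit formula (\ref{H}). The key point is that $H_{n,a}(w_n x\,w_n\,{}^tg^{-1})$, after a unipotent change of variables absorbing $w_n\,{}^tg^{-1}$-type factors, reduces to $H_{n,-a}(w_n x' g)$ up to a Jacobian that is trivial (the substitution is unipotent-linear with determinant $\pm 1$), because the quadratic forms $\Delta_{n,m}(x)$ built from the top $m$ rows are invariant under the relevant transpose-type symmetry, while the exponents $(a_{n-m+1}-a_{n-m}-1)/2$ get sent to $(-a)_{n-m+1}-(-a)_{n-m}-1)/2$ under $a\mapsto -a$ composed with the order-reversal induced by $w_n$. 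Equivalently, one can cite the well-known fact (e.g. from \cite{St90} or \cite{JS1}) that the Jacquet–Whittaker function for the contragredient representation $\pi_{-a}^A$ with respect to $\psi^{-1}$ is exactly $J_{n,a}(w_n\,{}^tg^{-1})$, and that this equals $J_{n,-a}^{\psi^{-1}}(g)$. Restricting to $g=\alpha[y_1,\dots,y_n]$, note that $w_n\,{}^t\alpha[y_1,\dots,y_n]^{-1}$ lies in the $K$-coset of $\alpha[y_1,\dots,y_n]$ itself (the inverse-transpose of a positive diagonal matrix, conjugated by $w_n$, gives back the same torus element up to reordering that is undone by the anti-diagonal $w_n$), so $(W_{n,a}^A)^\vee(\alpha[y_1,\dots,y_n])=W_{n,a}^A(\alpha[y_1,\dots,y_n])$ evaluated with $a$ replaced by $-a$ — i.e. $W_{n,-a}^A(\alpha[y_1,\dots,y_n])$.

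Finally, I would check that the Gamma-factor normalization in (\ref{WvsJ}) is consistent under $a\mapsto -a$: the product $\prod_{1\le j<k\le n}\Gamma_\R(a_j-a_k+1)$ becomes $\prod_{1\le j<k\le n}\Gamma_\R(-a_j+a_k+1)=\prod_{1\le j<k\le n}\Gamma_\R(a_k-a_j+1)$, which upon reindexing $(j,k)\mapsto(n+1-k,n+1-j)$ is exactly the product appearing for the parameter $-a$ after the Weyl reordering built into the contragredient; so the normalizations match and $W_{n,a}^A$'s contragredient is $W_{n,-a}^A$ on the nose (with our convention $W(e)=1$ preserved). The main obstacle I anticipate is bookkeeping: tracking the Weyl-element reordering $a\mapsto(a_n,\dots,a_1)$ versus the sign flip $a\mapsto -a$ and confirming they combine to give precisely $W_{n,-a}^A$ rather than $W_{n,(-a_n,\dots,-a_1)}^A$ — but since $W_{n,a}^A$ is symmetric in the entries of $a$ (the Whittaker function depends only on the principal series, which is invariant under permuting the $a_i$), this ambiguity is harmless, and the identity follows.
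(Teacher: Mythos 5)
Your proposed route -- arguing formally that $g\mapsto w_n{}^tg^{-1}$ carries the $\psi$-Whittaker model of $\pi_a^A$ to the $\psi^{-1}$-Whittaker model of $\pi_{-a}^A$, and then tracking through the Jacquet integral and the formula (\ref{H}) -- is genuinely different from the paper's, which first establishes the reflection symmetry
$$U_{n,a}(s_{n-1}-|a|,\dotsc,s_1-|a|)=U_{n,-a}(s_1,\dotsc,s_{n-1})$$
by induction on $n$ using Proposition \ref{MBGL}, and then feeds that into the Mellin--Barnes representation. However, there is a concrete error in your reduction to the torus: you assert that $w_n\,{}^t\alpha[y_1,\dotsc,y_n]^{-1}$ lies in the $K$-coset of $\alpha[y_1,\dotsc,y_n]$ itself. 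That is false. Writing $\alpha[y]=\mathrm{diag}(y_1\cdots y_n,\dotsc,y_n)$, one computes
$$w_n\,{}^t\alpha[y]^{-1}=\alpha[y_{n-1},y_{n-2},\dotsc,y_1,(y_1\cdots y_n)^{-1}]\,w_n,$$
so the relevant coset representative is $\alpha[y_{n-1},\dotsc,y_1,(y_1\cdots y_n)^{-1}]$, with the $y_j$'s reversed and the last coordinate inverted. This reordering is precisely what makes the radial identity $(W_{n,a}^A)^\vee|_Y=W_{n,-a}^A|_Y$ a nontrivial claim rather than a tautology; it is the very point that the paper's Mellin-transform lemma (\ref{U}) is designed to handle, since after Mellin inversion and the substitution $s_j\mapsto s_{n-j}-|a|$ the reversal-and-shift is absorbed by that symmetry.

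Your Jacquet-integral sketch and the Gamma-factor bookkeeping are plausible in outline, and the passing reference to multiplicity one plus evaluation at $g=e$ (using $w_n\in K$ to get $(W_{n,a}^A)^\vee(e)=W_{n,a}^A(w_n)=1$) does point toward a valid ``soft'' proof: once one knows $(W_{n,a}^A)^\vee$ is a right-$K$-fixed $\psi^{-1}$-Whittaker vector in $\mathcal{W}(\pi_{-a}^A,\psi^{-1})$ with the correct normalization, and that conjugating a $\psi$-Whittaker function by a suitable $m\in M\subset K$ converts it to a $\psi^{-1}$-Whittaker function without altering the radial restriction, the identity follows by uniqueness. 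But as written, your argument rests on the incorrect $K$-coset claim, and the hand-waving about the invariance of the $\Delta_{n,m}(x)$ under the ``transpose-type symmetry'' does not substitute for either the paper's induction or the uniqueness argument. You should either carry out the unipotent change of variables in the Jacquet integral honestly -- keeping the reversal $y_j\mapsto y_{n-j}$ in view throughout -- or replace it cleanly by the multiplicity-one/normalization argument.
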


\bpf
We first prove
\begin{align} \label{U}
U_{n,a}(s_{n-1}-|a|,\dotsc, s_1-|a|) = U_{n,-a}(s_1,\dotsc,s_{n-1})
\end{align}
by induction on $n$. The case $n=2$ follows automatically from the definitions.  Now, by Proposition \ref{MBGL} and the induction hypothesis, we have
\begin{align*}
& U_{n,a}(s_{n-1}-|a|,\dotsc,s_1-|a|)
\\
& = \frac{2^{-1}}{(4\pi i)^{n-2}} \int_{z_1,\dotsc,z_{n-2}}
U_{n-1,-\widetilde{a}}(z_{n-2}+|\widetilde{a}| ,\dotsc, z_{1}+|\widetilde{a}|)
\\
& \times \biggl[ \prod_{j=1}^{n-1} \Gamma_{\R}\Bigl(s_{n-j}-|a|-z_j-\frac{ja_1}{n-1} \Bigr)
\Gamma_{\R} \Bigl( s_{n-j}-|a|-z_{j-1}+\frac{(n-j)a_1}{n-1} \Bigr) \biggr]
\\
& \times dz_1 \cdots dz_{n-2}.
\end{align*}
We substitute $ z_j \to z_{n-j-1}-|\widetilde{a}| = z_{n-j-1}-|a| $, to find that
\begin{align*}
& U_{n,a}(s_{n-1}-|a|,\dotsc,s_1-|a|)
\\
& = \frac{2^{-1}}{(4\pi i)^{n-2}} \int_{z_1,\dotsc,z_{n-2}}
U_{n-1,-\widetilde{a}}(z_1,\dotsc,z_{n-2})
\\
& \times \biggl[ \prod_{j=1}^{n-1}
\Gamma_{\R}\Bigl(s_{n-j}-z_{n-j-1}+|a|-\frac{ja_1}{n-1} \Bigr)
\Gamma_{\R} \Bigl(s_{n-j}-z_{n-j}+\frac{(n-j)a_1}{n-1} \Bigr) \biggr]
dz_1 \cdots dz_{n-2}
\\
& = \frac{2^{-1}}{(4\pi i)^{n-2}} \int_{z_1,\dotsc,z_{n-2}}
U_{n-1,-\widetilde{a}}(z_1,\dotsc,z_{n-2})
\\
& \times \biggl[ \prod_{j=1}^{n-1}
\Gamma_{\R}\Bigl(s_{j}-z_{j-1}+|a|-\frac{(n-j)a_1}{n-1} \Bigr)
\Gamma_{\R} \Bigl(s_{j}-z_{j}+\frac{ja_1}{n-1} \Bigr) \biggr]
dz_1 \cdots dz_{n-2}
\\
& = U_{n,-a}(s_1,\dotsc,s_{n-1}).
\end{align*}
Thus we obtain (\ref{U}).

Now, since $ W_{n,a}^A $ is right $ O(n) $-invariant, we have
\begin{align*}
& (W_{n,a}^A)^{\vee}(\alpha[y_1,\dotsc,y_n])
\\
& = (W_{n,a}^A)^{\vee}( w_n {}^t \alpha[y_1,\dotsc, y_n]^{-1} w_n)
\\
& = (W_{n,a}^A)^{\vee}(\alpha[y_{n-1},y_{n-2},\dotsc,y_1,(y_1\cdots y_n)^{-1}])
\\
& = \biggl[\prod_{j=1}^{n} y_j^{-|a|} \biggr]
\frac{ y^{\rho,A} }{ (2 \pi i)^{n-1} }
\int_{s_1,\dotsc, s_{n-1}} U_{n,a}(s_1,\dotsc,s_{n-1})
\biggl[ \prod_{j=1}^{n-1} y_{n-j}^{-s_j} \biggr] ds_1 \cdots ds_{n-1}.
\end{align*}
We substitute $ s_j \to s_{n-j}-|a| $ for $ 1 \le j \le n-1 $
and use (\ref{U}), to find that
\begin{align*}
& (W_{n,a}^A)^{\vee}(\alpha[y_1,\dotsc,y_n])
\\
& = y_n^{-|a|} \cdot \frac{ y^{\rho,A} }{ (2 \pi i)^{n-1} }
\int_{s_1,\dotsc, s_{n-1}} U_{n,a}(s_{n-1}-|a|,\dotsc,s_{1}-|a|)
\biggl[ \prod_{j=1}^{n-1} y_{j}^{-s_j} \biggr] ds_1 \cdots ds_{n-1}
\\
& = W_{n,-a}^{A}(\alpha[y_1,\dotsc,y_n]).
\end{align*}
Thus we complete our proof.
\epf

%%%%%%%%%%%%%%%%%%%%%%%%%%%%%%%%%%%%

Our recursive relation for $ V_{n,b} $ is given by:

%%%%%%%%%% SO Mellin-Barnes %%%%%%%%%%%%
\begin{prop} \cite[Theorem 4.2]{I08} \label{MBSO}
We have
\begin{align*}
V_{n,b}(s_1,\dotsc,s_{n})
& = \frac{2^{-1}}{(4\pi i)^{2n-2}}
\int_{\scriptstyle w_1,\dotsc, w_{n-1} \atop \scriptstyle z_1,\dotsc,z_{n-1} }
V_{n-1,\widetilde{b}} (z_1,\dotsc,z_{n-1})
\\
& \times
\biggl[ \prod_{j=1}^{n-1} \Gamma_{\R}(s_j-w_j) \Gamma_{\R}(s_j-w_{j-1}-b_n)
\Gamma_{\R}(w_j-z_j) \Gamma_{\R}(w_j-z_{j-1}+b_n) \biggr]
\\
& \times
\Gamma_{\R}(s_n-w_{n-1}-b_n) \Gamma_{\R}(s_n-z_{n-1}+b_n)
\,dz_1 \cdots dz_{n-1} \, dw_1 \cdots dw_{n-1},
\end{align*}
where we understand that $ w_0 = z_0 =0 $.
Note that $ V_{1,(b_1)}(s) = 2^{-1} \Gamma_{\R}(s+b_1) \Gamma_{\R}(s-b_1) $.
\end{prop}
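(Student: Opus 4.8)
\medskip
\noindent\textbf{Proof proposal.}
The plan is to derive the recursion for $V_{n,b}$ directly from the integral recursion of Proposition~\ref{sowhit}, just as Proposition~\ref{MBGL} was obtained from Proposition~\ref{glwhit}. First I would substitute the formula of Proposition~\ref{sowhit} for $\widehat{W}_{n,b}^B(\beta[y_1,\dotsc,y_n])$ into the defining integral of $V_{n,b}(s_1,\dotsc,s_n)$, and replace the inner factor $\widehat{W}_{n-1,\widetilde{b}}^B(\beta[\eta_1,\dotsc,\eta_{n-1}])$ --- where $\eta_k = y_{k+1}\sqrt{t_{k+1}u_{k+1}/(t_{k+2}u_k)}$ for $1\le k\le n-2$ and $\eta_{n-1}=y_n\sqrt{t_n/u_{n-1}}$ --- by its own Mellin--Barnes representation $(2\pi i)^{-(n-1)}\int V_{n-1,\widetilde{b}}(z_1,\dotsc,z_{n-1})\prod_j\eta_j^{-z_j}\,dz_j$. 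This introduces the Barnes variables $z_1,\dotsc,z_{n-1}$ and turns every $\eta_k$ into an explicit monomial in the $y_j,t_j,u_j$; after interchanging the order of integration (legitimate by absolute convergence once $\Re s_j$ and the $z$-contours are placed suitably), there remains, inside the $z$-integral, a product of $3n-1$ one-dimensional integrals --- over $u_1,\dotsc,u_{n-1}$, over $y_1,\dotsc,y_n$, and over $t_1,\dotsc,t_n$ --- which I would evaluate in that order.

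The $u_j$-integral has the form $\int_0^\infty u^{\sigma_j}\exp\{-(\pi y_j)^2(t_j/t_{j+1})u - 1/u\}\,du/u = 2\bigl((\pi y_j)^2 t_j/t_{j+1}\bigr)^{-\sigma_j/2}K_{\sigma_j}\!\bigl(2\pi y_j\sqrt{t_j/t_{j+1}}\bigr)$ with $\sigma_j=(b_n-z_{j-1}+z_j)/2$; re-expanding each resulting $K$-Bessel via the Mellin--Barnes pair $\int_0^\infty K_\mu(2\pi\xi)\,\xi^{w}\,d\xi/\xi=\tfrac14\,\Gamma_{\R}(w+\mu)\Gamma_{\R}(w-\mu)$ then introduces the auxiliary Barnes variables $w_1,\dotsc,w_{n-1}$ and, after the harmless contour shift $w_j\mapsto w_j-(z_{j-1}+z_j-b_n)/2$, the pair $\Gamma_{\R}(w_j-z_j)\,\Gamma_{\R}(w_j-z_{j-1}+b_n)$. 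At this point $y_j$ occurs in no exponential other than the Gaussian $\exp\{-(\pi y_j)^2 t_j\}$, so each $y_j$-integral is elementary, $\int_0^\infty y^\tau\exp\{-(\pi y)^2 t_j\}\,dy/y=\tfrac12\,\Gamma_{\R}(\tau)\,(\pi^2 t_j)^{-\tau/2}$, and the exponents accumulated from the $(\pi y_j)^{b_n}$-factors, the $\eta$-monomials, the $u$-integrals and the Bessel expansions combine to give $\tau=s_j-w_j$ for $j\le n-1$ and $\tau=s_n-z_{n-1}+b_n$ for $j=n$. Finally each surviving $t_j$-integral reduces to $\int_0^\infty t^\sigma e^{-1/t}\,dt/t=\Gamma(-\sigma)$, and a similar tally of exponents yields $-2\sigma=s_j-w_{j-1}-b_n$ for $1\le j\le n-1$ and $-2\sigma=s_n-w_{n-1}-b_n$ for $j=n$ (with the conventions $w_0=z_0=0$). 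Tracking the numerical constants and the stray powers of $\pi$ --- the latter all cancelling against those hidden in $\Gamma_\R(s)=\pi^{-s/2}\Gamma(s/2)$ --- produces the overall factor $2^{-1}/(4\pi i)^{2n-2}$, which is exactly the asserted formula; the base case $n=1$ is the single identity $\int_0^\infty 2K_{b_1/2}(2\pi y)\,y^{s}\,dy/y=2^{-1}\Gamma_\R(s+b_1)\Gamma_\R(s-b_1)$.

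The main obstacle is the bookkeeping: one must track the exponent of each $y_j,t_j,u_j$ through the substitution $\widehat{W}_{n-1,\widetilde{b}}^B(\beta[\eta])\mapsto\prod_j\eta_j^{-z_j}$ and through every change of variables, checking at each step that this exponent is exactly the one needed for the integral in question to be of Gaussian, $K$-Bessel, or $e^{-1/t}$ type --- so that no genuine confluent-hypergeometric factors intervene --- and that the surviving $\Gamma_{\R}$-arguments are the stated ones, including the edge cases $j=1$ and $j=n$ where the conventions $w_0=z_0=0$ enter. Besides this, the only technical points are routine: justifying the interchange of the real and Barnes integrations, and placing and shifting the $z$- and $w$-contours so that the increasing and decreasing pole sequences of the various $\Gamma_{\R}$-factors remain properly separated.
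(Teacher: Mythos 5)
Your proposal correctly derives the recursion for $V_{n,b}$ by Mellin-transforming the integral recursion of Proposition~\ref{sowhit} directly: evaluating the $u_j$-integrals as $K$-Bessel functions, re-expanding those via Mellin--Barnes to introduce the auxiliary variables $w_j$, and then collapsing the $y_j$- and $t_j$-integrals into Gamma factors. I have checked the exponent bookkeeping, including the edge cases $j=1$ and $j=n$, and verified that the $t_j$-integrals produce $\Gamma_{\R}(s_j-w_{j-1}-b_n)$, the $y_j$-integrals produce $\Gamma_{\R}(s_j-w_j)$ (and $\Gamma_{\R}(s_n-z_{n-1}+b_n)$), and the constant tally indeed gives $2^{-1}/(4\pi i)^{2n-2}$. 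The one slip is that the elementary Gaussian formula should read $\int_0^\infty y^\tau e^{-(\pi y)^2 t}\,dy/y=\tfrac12\,\Gamma(\tau/2)(\pi^2 t)^{-\tau/2}$, not $\tfrac12\,\Gamma_{\R}(\tau)(\pi^2 t)^{-\tau/2}$, but since you explicitly remark that the residual powers of $\pi$ cancel against those implicit in $\Gamma_{\R}$, this is a transcription slip rather than a gap.

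This is nevertheless a genuinely different route from the paper's. The paper does not prove this proposition at all: it is cited verbatim from \cite[Theorem~4.2]{I08}, and the only argument supplied is the Remark following it, showing that the present double Barnes form reduces to the (shorter) form in \cite{I08} by integrating out $w_1,\dotsc,w_{n-1}$ via Barnes' first lemma~(\ref{Barnes1st}). Your proposal instead deduces the recursion from scratch starting from Proposition~\ref{sowhit} (which the paper also imports from \cite{I08}, so the two have the same external dependency), and it makes visible exactly where each $\Gamma_{\R}$-factor and each Barnes variable originates. The cost is the one you flag yourself: a heavy bookkeeping burden and the need to justify Fubini interchanges and $z$-dependent contour shifts, all of which the paper sidesteps by citing \cite{I08} and invoking one application of Barnes' first lemma.
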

%%%%%%%%%%%%%%%%%%%%%%%%%%%%%%%%%%%%%%

\begin{rem}  Technically, the formula given in the above proposition is of a different form from that given in \cite[Theorem 4.2]{I08}.  To show equality of these two formulas, one simply integrates  with respect to
$ w_1, \dotsc, w_{n-1} $ in the above formula, using   {\it Barnes' first lemma} (cf. \cite{Ba1}):
\begin{align} \label{Barnes1st}
\begin{split}
& \frac{1}{4\pi i}\int_z
\Gamma_{\R}(z+a) \Gamma_{\R}(z+b) \Gamma_{\R}(-z+c) \Gamma_{\R}(-z+d) \,dz
\\
& = \frac{\Gamma_{\R}(a+c) \Gamma_{\R}(a+d) \Gamma_{\R}(b+c) \Gamma_{\R}(b+d)}
{\Gamma_{\R} (a+b+c+d) }.
\end{split}
\end{align}
\end{rem}

%%%%%%%%%%%%%%%%%%%%%%%%%%%%%%%%%%%%%%%%%%%%%%%%%%%%%%%%%%%%%%%%%%%%%%%%%%
\subsection{Some identities for $ U_{n,a} $ and $ V_{n,b} $}
%%%%%%%%%%%%%%%%%%%%%%%%%%%%%%%%%%%%%%%%%%%%%%%%%%%%%%%%%%%%%%%%%%%%%%%%%%%
In this subsection, we present some useful
formulas expressing $ U_{n,a} $ (respectively, $ V_{n,b}) $ as a {\it Barnes-type} integral of $ U_{n,a} $
(respectively $ V_{n,b} $).  Here, by a Barne-type integral, we mean one of Mellin-Barnes type, as described above, but lacking the ``product of powers of the independent variables.''  That is, a Barnes-type integral is an integral of a rational expression in Gamma functions.
In our discussions of such Barnes-type integrals, Barnes' first lemma (\ref{Barnes1st})
will play a fundamental role.  

We start with an easy consequence of Barnes' first lemma.

%%%%%%% LEMMA FOR BARNES INT %%%%%%%%%%
\begin{lem} \label{lemBarnes}
Let $ \gamma $, $ \delta $,
$ c_j $ and $d_j $ $ (1 \le j \le n) $ be complex numbers; also define
$ c_0 = d_0 = 0 $.
\begin{itemize}
\item[(1)] We have
\begin{align*}
\begin{split}
& \frac{1}{(4\pi i)^{n}} \int_{z_1,\dotsc,z_{n}}
\biggl[ \prod_{j=1}^{n} \Gamma_{\R} (c_{j-1}+\gamma+z_j) \Gamma_{\R}(c_j + z_j)
\Gamma_{\R}( d_{j-1}+ \delta-z_j) \Gamma_{\R}(d_j-z_j) \, dz_j\biggr]
\\
& =
\frac{ \Gamma_{\R}(c_1+\delta) \Gamma_{\R}(d_1+\gamma)
\Gamma_{\R}(\gamma+\delta) \Gamma_{\R}(c_{n}+d_{n})}
{ \Gamma_{\R}(\gamma+\delta+c_{n}+d_{n}) }
\cdot \frac{1}{(4\pi i)^{n-1}} \int_{z_1,\dotsc,z_{n-1}}
\\
& \times
\biggl[ \prod_{j=1}^{n-1}
\Gamma_{\R}(c_j+z_j) \Gamma_{\R}(c_{j+1}+\delta+z_j)
\Gamma_{\R}(d_j-z_j) \Gamma_{\R}(d_{j+1}+\gamma-z_j)
\,dz_j \biggr].
\end{split}
\end{align*}
%%%%
\item[(2)] We have
\begin{align*}
\begin{split}
& \frac{1}{(4\pi i)^{n-1}} \int_{z_1,\dotsc,z_{n-1}}
\biggl[ \prod_{j=1}^{n-1} \Gamma_{\R}(c_j + z_j)
\Gamma_{\R} (c_{j+1}+\gamma+z_j) \biggr]
\\
& \times \biggl[ \prod_{j=1}^{n-1}
\Gamma_{\R}( d_{j-1}+ \delta-z_j) \Gamma_{\R}(d_j-z_j) \, dz_j\biggr]
\\
& =
\frac{ \Gamma_{\R}(c_1+\delta) \Gamma_{\R}(c_n+d_{n-1}+\gamma) }
{ \Gamma_{\R}(c_1+\gamma) \Gamma_{\R}(c_n+d_{n-1}+\delta) }
\cdot \frac{1}{(4\pi i)^{n-1}} \int_{z_1,\dotsc,z_{n-1}}
\\
& \times
\biggl[ \prod_{j=1}^{n-1}
\Gamma_{\R}(c_j+z_j) \Gamma_{\R}(c_{j+1}+\delta+z_j)
\Gamma_{\R}(d_{j-1}+\gamma-z_j) \Gamma_{\R}(d_j-z_j)
\,dz_j \biggr].
\end{split}
\end{align*}
\end{itemize}
\end{lem}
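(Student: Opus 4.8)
The plan is to prove both identities by the same elementary mechanism. In each case the integration variables decouple---the variable $z_j$ occurs in only one block of four $\Gamma_{\R}$-factors in the integrand---so the multifold integral is a product of single Barnes integrals; each of these is evaluated by Barnes' first lemma (\ref{Barnes1st}); and the asserted identity then collapses to a formal comparison of finite products of $\Gamma_{\R}$-factors, which I would settle by reindexing. No analytic input beyond (\ref{Barnes1st}) is needed, since every single Barnes integral converges absolutely (four $\Gamma_{\R}$-factors force exponential decay on vertical lines) and the resulting Gamma identities then hold for all parameter values by analytic continuation. Throughout, the contours are taken---as is the standing convention here---so as to separate the decreasing pole sequences (from the $\Gamma_{\R}(\ast+z_j)$ factors) from the increasing ones (from the $\Gamma_{\R}(\ast-z_j)$ factors), which is precisely the configuration for which (\ref{Barnes1st}) is stated.

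For part (1), the left side equals $\prod_{j=1}^{n}I_j$, where $I_j=\frac{1}{4\pi i}\int_{z_j}\Gamma_{\R}(c_{j-1}+\gamma+z_j)\Gamma_{\R}(c_j+z_j)\Gamma_{\R}(d_{j-1}+\delta-z_j)\Gamma_{\R}(d_j-z_j)\,dz_j$; applying (\ref{Barnes1st}) with $(a,b,c,d)=(c_{j-1}+\gamma,\,c_j,\,d_{j-1}+\delta,\,d_j)$ gives $I_j$ explicitly. The same device writes the right-hand integral as $\prod_{j=1}^{n-1}I_j'$, where $I_j'=\frac{1}{4\pi i}\int_{z_j}\Gamma_{\R}(c_j+z_j)\Gamma_{\R}(c_{j+1}+\delta+z_j)\Gamma_{\R}(d_j-z_j)\Gamma_{\R}(d_{j+1}+\gamma-z_j)\,dz_j$, evaluated by (\ref{Barnes1st}) with $(a,b,c,d)=(c_j,\,c_{j+1}+\delta,\,d_j,\,d_{j+1}+\gamma)$. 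I would then finish with a direct comparison of the two products: after pulling the boundary terms out of $\prod I_j$ using $c_0=d_0=0$ and shifting $j\mapsto j+1$ in the appropriate subproducts, both $\prod_{j=1}^n I_j$ and $\frac{\Gamma_{\R}(c_1+\delta)\Gamma_{\R}(d_1+\gamma)\Gamma_{\R}(\gamma+\delta)\Gamma_{\R}(c_n+d_n)}{\Gamma_{\R}(\gamma+\delta+c_n+d_n)}\prod_{j=1}^{n-1}I_j'$ reduce to the same explicit quotient of $\Gamma_{\R}$'s. (Equivalently one may induct on $n$, the step reducing to the single identity $I_n\,\Gamma_{\R}(c_{n-1}+d_{n-1})/\Gamma_{\R}(c_{n-1}+d_{n-1}+\gamma+\delta)=I_{n-1}'\,\Gamma_{\R}(c_n+d_n)/\Gamma_{\R}(c_n+d_n+\gamma+\delta)$, checked directly from the closed forms just obtained.)

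Part (2) is entirely parallel. Here the $j$-th block on the left is $\Gamma_{\R}(c_j+z_j)\Gamma_{\R}(c_{j+1}+\gamma+z_j)\Gamma_{\R}(d_{j-1}+\delta-z_j)\Gamma_{\R}(d_j-z_j)$ and on the right $\Gamma_{\R}(c_j+z_j)\Gamma_{\R}(c_{j+1}+\delta+z_j)\Gamma_{\R}(d_{j-1}+\gamma-z_j)\Gamma_{\R}(d_j-z_j)$, so (\ref{Barnes1st}) expresses the left side as $\prod_{j=1}^{n-1}J_j$ and the right side as (the stated prefactor) times $\prod_{j=1}^{n-1}J_j'$, with explicit $J_j,J_j'$. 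Since $J_j$ and $J_j'$ have the common denominator $\Gamma_{\R}(c_j+c_{j+1}+d_{j-1}+d_j+\gamma+\delta)$, one computes $J_j/J_j'=\Gamma_{\R}(c_j+d_{j-1}+\delta)\Gamma_{\R}(c_{j+1}+d_j+\gamma)\,/\,[\Gamma_{\R}(c_j+d_{j-1}+\gamma)\Gamma_{\R}(c_{j+1}+d_j+\delta)]$; forming $\prod_{j=1}^{n-1}(J_j/J_j')$, the factors with argument $c_j+d_{j-1}$ for $2\le j\le n-1$ cancel against those with argument $c_{j+1}+d_j$ for $1\le j\le n-2$, leaving exactly $\Gamma_{\R}(c_1+\delta)\Gamma_{\R}(c_n+d_{n-1}+\gamma)\,/\,[\Gamma_{\R}(c_1+\gamma)\Gamma_{\R}(c_n+d_{n-1}+\delta)]$, which is the asserted prefactor.

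There is no genuine difficulty here; the only thing requiring care---the ``hard part'', such as it is---is the combinatorial bookkeeping: matching the $\Gamma_{\R}$-indices correctly through the reindexing and telescoping, and making sure at the outset that the contours are placed so that (\ref{Barnes1st}) applies verbatim to each decoupled single integral.
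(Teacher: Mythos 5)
Your proposal is correct and follows the same route as the paper: decouple the multifold integral into single Barnes integrals, evaluate each by Barnes' first lemma (\ref{Barnes1st}), and verify the identity by a formal comparison of the resulting products of $\Gamma_{\R}$-factors. In fact you supply more of the telescoping bookkeeping (especially for part (2)) than the paper itself records.
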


\bpf
Let first us show (1).
Using Barnes' first lemma (\ref{Barnes1st}),
we can integrate in all variables on either side.  On the left hand side, we get
\begin{align*}
& \prod_{j=1}^{n-1}
\frac{\Gamma_{\R}(c_{j-1}+d_{j-1}+\gamma+\delta) \Gamma_{\R}(c_{j-1}+d_j+\gamma)
\Gamma_{\R}(c_j+d_{j-1}+\delta) \Gamma_{\R}(c_j+d_j)}
{\Gamma_{\R}(c_{j-1}+c_j+d_{j-1}+d_j+\gamma+\delta)},
\end{align*}while on the right hand side, we get
\begin{align*}& \frac{ \Gamma_{\R}(c_1+\delta) \Gamma_{\R}(d_1+\gamma)
\Gamma_{\R}(\gamma+\delta) \Gamma_{\R}(c_{n-1}+d_{n-1})}
{\Gamma_{\R}(c_{n-1}+d_{n-1}+\gamma+\delta)}
\\
& \times
\prod_{j=1}^{n-2}
\frac{ \Gamma_{\R}(c_j+d_j) \Gamma_{\R}(c_j+d_{j+1}+\gamma)
\Gamma_{\R}(c_{j+1}+d_j+ \delta) \Gamma_{\R}(c_{j+1}+d_{j+1}+\gamma+\delta) }
{ \Gamma_{\R}(c_j+c_{j+1}+d_j+d_{j+1}+\gamma+\delta) }.
\end{align*}The two sides are then readily seen to be equal.

The latter claim (2) follows similarly from Barnes' first lemma.
\epf

%%% END OF LEMMA FOR BARNES INT %%%%%%%
\medskip

We now present some expressions, to be of use to us in the next two sections, for $U_{n,a}$ as integrals that themselves involve $U_{n,a}$.

%%%%%%%%%%% Lemma for GL %%%%%%%%%%%%%%
\begin{prop} \label{lemGL}
\begin{itemize}
%%%
\item[(1)]
For a complex number $ \sigma $, we have
\begin{align} \label{lemGL1}
\begin{split}
U_{n,a}(p_1,\dotsc, p_{n-1})
& = \frac{\Gamma_{\R}(p_{n-1}+\sigma+|a|)}
{ \prod_{j=1}^{n} \Gamma_{\R}(\sigma+a_j) }
%\\
%& \times
\cdot \frac{1}{(4\pi i)^{n-1}}
\int_{q_1,\dotsc,q_{n-1}}
U_{n,a}(q_1,\dotsc, q_{n-1})
\\
& \times
\biggl[ \prod_{j=1}^{n-1} \Gamma_{\R}(p_j-q_j) \Gamma_{\R}(p_{j-1}-q_{j}+\sigma) \biggr]
dq_1\cdots dq_{n-1}.
\end{split}
\end{align}
Here we understand that $ p_0 = 0 $.
%%%
\item[(2)]
For a complex number $ \sigma $, we have
\begin{align} \label{lemGL2}
\begin{split}
U_{n,a}(p_1,\dotsc, p_{n-1})
& = \frac{\Gamma_{\R}(p_{1}+\sigma)}
{ \prod_{j=1}^{n} \Gamma_{\R}(\sigma-a_j) }
%\\
%& \times
\cdot \frac{1}{(4\pi i)^{n-1}}
\int_{q_1,\dotsc,q_{n-1}}
U_{n,a}(q_1,\dotsc, q_{n-1})
\\
& \times
\biggl[ \prod_{j=1}^{n-1} \Gamma_{\R}(p_j-q_j) \Gamma_{\R}(p_{j+1}-q_{j}+\sigma) \biggr]
dq_1\cdots dq_{n-1}.
\end{split}
\end{align}
Here we understand that $ p_n = -|a| $.
\end{itemize}
\end{prop}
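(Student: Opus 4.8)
The plan is to prove both identities in Proposition~\ref{lemGL} by reducing them to applications of Lemma~\ref{lemBarnes}, after first rewriting $U_{n,a}$ using the recursive formula of Proposition~\ref{MBGL}. Concretely, for part (1), I would start from the right-hand side of \eqref{lemGL1}, replace the inner $U_{n,a}(q_1,\dotsc,q_{n-1})$ by its Barnes-integral expression over variables $z_1,\dotsc,z_{n-2}$ (with $z_0=0$, $z_{n-1}=-|a|$) given by Proposition~\ref{MBGL}, and then interchange the orders of integration so that the $q_j$-integrals are performed first. What remains inside is, for each $j$, a product of four $\Gamma_{\R}$-factors in $q_j$ — two coming from the kernel $\Gamma_{\R}(p_j-q_j)\Gamma_{\R}(p_{j-1}-q_j+\sigma)$ and two from the Proposition~\ref{MBGL} kernel $\Gamma_{\R}(q_j-z_j-ja_1/(n-1))\Gamma_{\R}(q_j-z_{j-1}+(n-j)a_1/(n-1))$ — so the $q$-integral is exactly of the shape handled by Lemma~\ref{lemBarnes}(2) (or (1)), after matching $c_j,d_j,\gamma,\delta$ appropriately. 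Carrying out that matching collapses the $(n-1)$-fold $q$-integral to an $(n-1)$-fold $z$-integral of the same type that appears in Proposition~\ref{MBGL}, times an explicit ratio of $\Gamma_{\R}$'s; recognizing that residual $z$-integral as $U_{n,a}(p_1,\dotsc,p_{n-1})$ (again via Proposition~\ref{MBGL}) and checking that the leftover Gamma ratio is precisely $\Gamma_{\R}(p_{n-1}+\sigma+|a|)/\prod_{j=1}^n\Gamma_{\R}(\sigma+a_j)$ finishes part (1).

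An alternative, and perhaps cleaner, route is induction on $n$ directly: the base case $n=2$ is the statement that
\[
2^{-1}\Gamma_{\R}(p_1+a_1)\Gamma_{\R}(p_1+a_2)
= \frac{\Gamma_{\R}(p_1+\sigma+|a|)}{\Gamma_{\R}(\sigma+a_1)\Gamma_{\R}(\sigma+a_2)}\cdot\frac{1}{4\pi i}\int_{q_1} U_{2,(a_1,a_2)}(q_1)\,\Gamma_{\R}(p_1-q_1)\Gamma_{\R}(-q_1+\sigma)\,dq_1,
\]
which follows from Barnes' first lemma \eqref{Barnes1st} together with $U_{2,(a_1,a_2)}(q_1)=2^{-1}\Gamma_{\R}(q_1+a_1)\Gamma_{\R}(q_1+a_2)$. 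For the inductive step one again expands both $U_{n,a}$'s (the one on the left and the one inside the integral on the right) via Proposition~\ref{MBGL} in terms of $U_{n-1,\widetilde a}$, applies the induction hypothesis for $\sigma' = \sigma + |a|/n$ or a similarly shifted parameter, and reduces the comparison to an identity among Barnes-type integrals that is handled by Lemma~\ref{lemBarnes}. Part (2) is entirely parallel, with the roles of the two ``ends'' of the index chain swapped: one uses $p_n=-|a|$ in place of $p_0=0$, expands via Proposition~\ref{MBGL}, and applies the other half of Lemma~\ref{lemBarnes}; alternatively, (2) can be deduced from (1) by combining it with the contragredient symmetry \eqref{U} from Corollary~\ref{GLcont}, after the substitution $p_j\to p_{n-j}-|a|$, $\sigma\to\sigma$, and $a\to -a$, which swaps $\prod\Gamma_{\R}(\sigma+a_j)$ with $\prod\Gamma_{\R}(\sigma-a_j)$ and $p_{n-1}+|a|$ with $p_1$.

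The main obstacle I anticipate is bookkeeping rather than conceptual: one must carefully track the index shifts in the arguments of the $\Gamma_{\R}$-factors (the $ja_1/(n-1)$ and $(n-j)a_1/(n-1)$ terms, the boundary conventions $z_0=0$, $z_{n-1}=-|a|$, $p_0=0$, $p_n=-|a|$) so that, after the interchange of integration order, the inner integral over each $q_j$ genuinely matches the hypothesis of Lemma~\ref{lemBarnes} with a consistent global choice of $\gamma$ and $\delta$ — here $\gamma$ and $\delta$ should come out to be built from $\sigma$ and the $a_j$'s. A secondary point requiring care is the justification of the interchange of the $q$- and $z$-integrals and the placement of the contours: one must choose the vertical lines of integration so that all increasing sequences of poles (from the $\Gamma_{\R}(q_j-\cdots)$ factors) stay to the left of all decreasing ones (from the $\Gamma_{\R}(p_j-q_j)$ factors), which is possible by the usual indentation argument for Mellin--Barnes integrals, and I would simply remark that the contours are chosen in this standard way.
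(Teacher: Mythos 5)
Your second route---induction on $n$ with base case Barnes' first lemma, expanding the inner $U_{n,a}$ via Proposition~\ref{MBGL}, rearranging the $q$-integration with Lemma~\ref{lemBarnes}, and then invoking the induction hypothesis at level $n-1$ (the correct shift is $\sigma\mapsto\sigma-a_1/(n-1)$ together with $a\mapsto\widetilde a$, not $\sigma+|a|/n$, but you hedged on this)---is exactly the paper's proof. Your first, non-inductive sketch is too optimistic as stated: applying Lemma~\ref{lemBarnes} to the $q$-variables only reduces the $(n-1)$-fold $q$-integral to an $(n-2)$-fold one whose integrand still mixes the remaining $q$'s with the level-$(n-1)$ variables coming from Proposition~\ref{MBGL}; recognizing the outcome as $U_{n,a}(p_1,\dotsc,p_{n-1})$ requires also performing that second integration, which is precisely where the induction hypothesis enters, so the first route, made precise, collapses back into the second. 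The one genuine departure from the paper is your treatment of part (2): the paper proves (2) by a parallel induction using the rewritten form of Lemma~\ref{lemBarnes}(2), whereas your proposal to apply (1) with $a$ replaced by $-a$ and then use the symmetry (\ref{U}) from Corollary~\ref{GLcont}, reindexing $p_j\to p_{n-j}-|a|$ and $q_j\to q_{n-j}-|a|$, does check out (the boundary convention $p_0=0$ turns into $p_n=-|a|$, and $\Gamma_{\R}(p_{n-1}+\sigma+|a|)$ becomes $\Gamma_{\R}(p_1+\sigma)$), and it is non-circular since Corollary~\ref{GLcont} is established from Proposition~\ref{MBGL} alone; this buys a shorter proof of (2) at the cost of invoking the contragredient symmetry, while the paper's version keeps the two parts structurally parallel.
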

%%%%%%%%%%%%%%
\bpf
Our proof proceeds by induction on $n$.  The case $n=2$ amounts to Barnes' first lemma (\ref{Barnes1st}).

Now by Proposition \ref{MBGL}, the right hand side of (\ref{lemGL1}) is seen to be equal to 
\begin{align}\begin{split}& \frac{\Gamma_{\R}(p_{n-1}+\sigma+|a|)}
{ \prod_{j=1}^{n} \Gamma_{\R}(\sigma+a_j) }
\cdot \frac{2^{-1} }{(4\pi i)^{2n-3}}
\int_{ \scriptstyle q_1,\dotsc, q_{n-1} \atop
\scriptstyle r_1,\dotsc, r_{n-2} }
U_{n-1, \widetilde{a}}(r_1,\dotsc, r_{n-2})
\\
& \times
\biggl[ \prod_{j=1}^{n-1}
\Gamma_{\R} \Bigl( q_j-r_j- \frac{j a_1}{n-1} \Bigr)
\Gamma_{\R} \Bigl( q_j-r_{j-1}+ \frac{(n-j)a_1}{n-1} \Bigr) \biggr]
\\
& \times
\biggl[\prod_{j=1}^{n-1}
\Gamma_{\R}(p_j-q_j) \Gamma_{\R}(p_{j-1}-q_j+\sigma) \biggr]
\,dr_1 \cdots dr_{n-2} \, dq_1 \cdots dq_{n-1},\label{rtside1}
\end{split}\end{align}
where $r_0=0$ and $ r_{n-1}  = -|a| $.
To the integral in the $q_j$'s in (\ref{rtside1}), we now apply Lemma \ref{lemBarnes} (1) with $ \gamma = a_1 $; $ \delta = \sigma $; and, for $1\le j\le n-1$, 
$ c_j = -r_j - ja_1/(n-1) $ and $ d_j = p_j $.  We thereby find that (\ref{rtside1}) equals
\begin{align}\begin{split}&\ \frac{\Gamma_{\R}(p_{n-1}+\sigma+|a|)}
{ \prod_{j=1}^{n} \Gamma_{\R}(\sigma+a_j) }
\cdot \frac{2^{-1}}{(4\pi i)^{2n-4}}
\int_{ \scriptstyle q_1,\dotsc, q_{n-2} \atop
\scriptstyle r_1,\dotsc, r_{n-2} }
U_{n-1, \widetilde{a}}(r_1,\dotsc, r_{n-2})
\\
& \times
\frac{ \Gamma_{\R}(-r_1-\frac{a_1}{n-1} + \sigma) \Gamma_{\R}(p_1+a_1)
\Gamma_{\R}(\sigma+a_1) \Gamma_{\R}(p_{n-1}-a_1+|a|) }
{ \Gamma_{\R}(p_{n-1}+\sigma+a_1+\cdots+a_n) }
\\
& \times
\biggl[ \prod_{j=1}^{n-2}
\Gamma_{\R} \Bigl(q_j-r_j-\frac{ja_1}{n-1} \Bigr)
\Gamma_{\R} \Bigl(q_j-r_{j+1}+\sigma-\frac{(j+1)a_1}{n-1} \Bigr) \biggr]
\\
& \times
\biggl[ \prod_{j=1}^{n-2}
\Gamma_{\R}( p_j-q_j) \Gamma_{\R}(p_{j+1}-q_j+a_1) \biggr]
\,dr_1 \cdots dr_{n-2} \, dq_1 \cdots dq_{n-2}
%%%%
\\
& = \frac{1}{ \prod_{j=2}^n \Gamma_{\R}(\sigma + a_j) }
\cdot \frac{2^{-1}}{(4 \pi i)^{2n-4}}
\int_{ \scriptstyle q_1,\dotsc, q_{n-2} \atop
\scriptstyle r_1,\dotsc, r_{n-2} }
U_{n-1, \widetilde{a}}(r_1,\dotsc, r_{n-2})
\\
& \times \Gamma_{\R}(p_{n-1}-a_1+|a|)
\Gamma_{\R}(q_{n-2}+\sigma-a_1+|a|)
\Gamma_{\R}(p_{n-1}-q_{n-2}+a_1)
\\
& \times
\biggl[ \prod_{j=1}^{n-2}
\Gamma_{\R} \Bigl( q_j-r_j-\frac{ja_1}{n-1} \Bigr)
\Gamma_{\R} \Bigl( q_{j-1}-r_j-\frac{ja_1}{n-1} + \sigma \Bigr) \biggr]
\\
& \times \biggl[ \prod_{j=1}^{n-2}
\Gamma_{\R}( p_j-q_j) \Gamma_{\R}(p_j-q_{j-1}+a_1) \biggr]
\,dr_1\cdots dr_{n-2} \,dq_1 \cdots dq_{n-2}.\label{rtside2}
\end{split}\end{align}
By the  induction hypothesis, we can integrate with respect to
the $ r_j $'s, to find that (\ref{rtside2}) equals
\begin{align}\begin{split}\label{rtside3}
& \frac{1}{\prod_{j=2}^n \Gamma_{\R}(\sigma+a_j)}
\cdot \frac{2^{-1}}{(4\pi i)^{n-2}} \int_{q_1,\dotsc, q_{n-2}}
\frac{ \prod_{j=1}^{n-1} \Gamma_{\R}(\sigma-\frac{a_1}{n-1}+\widetilde{a}_j) }
{ \Gamma_{\R}(q_{n-2} + \sigma - a_1 + |\widetilde{a}| ) }
\\
& \times U_{n,\widetilde{a}}
\Bigl(q_1-\frac{a_1}{n-1}, \dotsc, q_{n-2}-\frac{(n-2)a_1}{n-1} \Bigr)
\\
& \times \Gamma_{\R}(p_{n-1}-a_1+|a|) \Gamma_{\R}(p_{n-1}-q_{n-2}+a_1)
\Gamma_{\R}(q_{n-2}+\sigma-a_1+|a|)
\\
& \times
\biggl[ \prod_{j=1}^{n-2} \Gamma_{\R}(p_j-q_j) \Gamma_{\R}(p_j-q_{j-1}+a_1) \biggr]
\, dq_1 \cdots dq_{n-2}
\\
& = \frac{2^{-1}}{(4\pi i)^{n-2}}
\int_{q_1,\dotsc,q_{n-2}} U_{n,\widetilde{a}}
\Bigl(q_1-\frac{a_1}{n-1}, \dotsc, q_{n-2}-\frac{(n-2)a_1}{n-1} \Bigr)
\\
& \times \biggl[ \prod_{j=1}^{n-1}
\Gamma_{\R}(p_j-q_j) \Gamma_{\R}(p_j-q_{j-1}+a_1) \biggr] \,dq_1 \cdots dq_{n-2}.\end{split}
\end{align}
Here we've used the fact that  that $ |a| = |\widetilde{a}| $; also, we've defined $ q_{n-1} = a_1-|a| $.

Finally, after the substitution $ q_j \to q_j+ja_1/(n-1) $ for $ 1 \le j \le n-2 $,
we use Proposition \ref{MBGL} to find that (\ref{rtside3})
becomes $ U_{n,a}(p_1,\dotsc,p_{n-1}) $.
Thus we complete the proof of (\ref{lemGL1}).
%%%%%
\medskip

The proof of  (\ref{lemGL2}) is similar:  Again, we use induction on $n$, with the case $n=2$ being equivalent to Barnes' first lemma (\ref{Barnes1st}).

To rearrange the integration over the $ q_j $'s,
we first rewrite the identity in Lemma \ref{lemBarnes} (2) slightly,
by integrating with respect to $z_{n-1}$ on the right hand side.   We get
\begin{align*}
\begin{split}
& \frac{1}{(4\pi i)^{n-1}} \int_{z_1,\dotsc,z_{n-1}}
\biggl[ \prod_{j=1}^{n-1} \Gamma_{\R}(c_j+z_j) \Gamma_{\R}(c_{j+1}+\gamma+z_j)
\Gamma_{\R}(d_{j-1}+\delta-z_j) \Gamma_{\R}(d_j-z_j) \,dz_j \biggr]
\\
& = \frac{\Gamma_{\R}(c_1+\delta) \Gamma_{\R}(c_n+d_{n-1}+\gamma)
\Gamma_{\R}(c_{n-1}+d_{n-2}+\gamma) }
{\Gamma_{\R}(c_1+\gamma) \Gamma_{\R}(c_{n-1}+c_n+d_{n-2}+d_{n-1}+\gamma+\delta)}
\\
& \times \Gamma_{\R}(c_{n-1}+d_{n-1}) \Gamma_{\R}(c_n+d_{n-2}+\gamma+\delta)
\\
& \times
\frac{1}{(4\pi i)^{n-2}} \int_{z_1,\dotsc,z_{n-2}}
\biggl[ \prod_{j=1}^{n-2} \Gamma_{\R}(c_j+z_j) \Gamma_{\R}(c_{j+1}+\delta+z_j) \biggr]
\\
& \times
\biggl[ \prod_{j=1}^{n-2} \Gamma_{\R}(d_{j-1}+\gamma-z_j) \Gamma_{\R}(d_j-z_j) \,dz_j \biggr].
\end{split}
\end{align*}
Applying this formula with
$ c_j = p_j $ $ (1 \le j \le n-1) $, $ c_n = -|a| $, $ d_j = -r_j-ja_1/(n-1) $
$ (1 \le j \le n-2) $, $ d_{n-1} = -a_1+|a| $, $ \gamma = \sigma $ and
$ \delta = a_1$, we find that 
the right hand side of (\ref{lemGL2}) becomes
\begin{align*}
& \frac{ 1}{ \prod_{j=2}^n \Gamma_{\R}(\sigma-a_j) } \cdot \frac{2^{-1} }{(4\pi i)^{2n-4}}
\int_{ \scriptstyle q_1,\dotsc, q_{n-2} \atop \scriptstyle r_1,\dotsc,r_{n-2}}
U_{n-1,\widetilde{a}}(r_1, \dotsc, r_{n-2})
\\
& \times
\Gamma_{\R}(p_1+a_1) \Gamma_{\R}(p_{n-1}-a_1+|a|)
\biggl[ \prod_{j=1}^{n-2} \Gamma_{\R}(p_j-q_j) \Gamma_{\R}(p_{j+1}-q_j+a_1) \biggr]
\\
& \times
\Gamma_{\R}(q_1+\sigma)
\biggl[ \prod_{j=1}^{n-2} \Gamma_{\R}\Bigl(q_{j+1}-r_j-\frac{ja_1}{n-1} + \sigma \Bigr)
\Gamma_{\R} \Bigl(q_j-r_j-\frac{ja_1}{n-1} \Bigr) \biggr]
\\
& \times dr_1 \cdots dr_{n-2} \, dq_1 \cdots dq_{n-2}.
\end{align*}
Using the   induction hypothesis to integrate in the  $r_j$'s,
together with Proposition \ref{MBGL}, we prove our assertion.
\epf
%%%%%%%%%%% END of LEMMA for GL %%%%%%%
\medskip

Our $SO_{2n+1}(\R)$-analog of Proposition \ref{lemGL} is as follows.

%%%%%%%%%%% Lemma for SO %%%%%%%%%%%%%%
\begin{prop} \label{lemSO}
For a complex number $ \sigma $, we have
\begin{align} \label{lemSOeq}
\begin{split}
& V_{n,b}(p_1,\dotsc,p_n)
\\
& = \frac{1}{\prod_{j=1}^n \Gamma_{\R}(\sigma+b_j) \Gamma_{\R}(\sigma-b_j) }
\cdot \frac{1}{(4\pi i)^{2n-1}}
\int_{\scriptstyle q_1,\dotsc,q_{n-1} \atop \scriptstyle r_1,\dotsc,r_{n}}
V_{n,b}(r_1,\dotsc,r_{n})
\\
& \times
\biggl[ \prod_{j=1}^{n-1} \Gamma_{\R}(p_j-q_j)
\Gamma_{\R}(q_j-r_j) \biggr]
\biggl[ \prod_{j=1}^n
\Gamma_{\R}(p_j-q_{j-1}+\sigma) \Gamma_{\R}(q_{j-1}-r_{j}+\sigma) \biggr]
\\
& \times
\Gamma_{\R}(p_n-r_n)\,dr_1 \cdots dr_{n} \, dq_1 \cdots dq_{n-1}.
\end{split}
\end{align}
Here we understand that $q_0=0$.
\end{prop}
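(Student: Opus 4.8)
The plan is to establish \eqref{lemSOeq} by induction on $n$, closely following the template of the proof of Proposition \ref{lemGL}. For the base case $n=1$ one has $V_{1,(b_1)}(s)=2^{-1}\Gamma_{\R}(s+b_1)\Gamma_{\R}(s-b_1)$ and $q_0=0$, so the right-hand side of \eqref{lemSOeq} collapses to $\dfrac{\Gamma_{\R}(p_1+\sigma)}{\Gamma_{\R}(\sigma+b_1)\Gamma_{\R}(\sigma-b_1)}\cdot\dfrac{1}{4\pi i}\int_{r_1}V_{1,b}(r_1)\,\Gamma_{\R}(\sigma-r_1)\,\Gamma_{\R}(p_1-r_1)\,dr_1$, and a single application of Barnes' first lemma \eqref{Barnes1st} (with $a=b_1$, $b=-b_1$, $c=\sigma$, $d=p_1$) turns this into $V_{1,b}(p_1)$.

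For the inductive step, I would assume that \eqref{lemSOeq} holds with $(n,b)$ replaced by $(n-1,\widetilde{b})$, and then substitute the recursion of Proposition \ref{MBSO} for $V_{n,b}(r_1,\dots,r_n)$ into the right-hand side of \eqref{lemSOeq}. This introduces auxiliary integration variables $w_1,\dots,w_{n-1}$ and $z_1,\dots,z_{n-1}$, and replaces $V_{n,b}(r_1,\dots,r_n)$ by $V_{n-1,\widetilde{b}}(z_1,\dots,z_{n-1})$ against a product of $\Gamma_{\R}$-factors. One then checks that each $r_j$ ($1\le j\le n$) occurs in exactly four $\Gamma_{\R}$-factors, so that Barnes' first lemma \eqref{Barnes1st} lets one integrate out $r_1,\dots,r_n$, leaving an integral over the $q_j$, $w_j$, $z_j$ of $V_{n-1,\widetilde{b}}(z)$ against a new product of $\Gamma_{\R}$-factors (now also containing denominator $\Gamma_{\R}$'s). (An alternative entry point is to integrate out the $q_j$ directly in \eqref{lemSOeq} before expanding $V_{n,b}(r)$, since there each $q_j$ also appears in exactly four $\Gamma_{\R}$-factors; I expect the two routes to converge.)

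Next I would apply Lemma \ref{lemBarnes} — parts (1) and (2), as appropriate — to reorganize the $q$- and $w$-integrations, reducing the number of those variables and bringing the $z$-integration into precisely the shape of the right-hand side of the inductive hypothesis (the $(n-1)$-version of \eqref{lemSOeq}). Invoking that hypothesis collapses the $z$-integral into a single factor $V_{n-1,\widetilde{b}}(\dots)$ with suitably shifted arguments; then, after a final change of variables, one recognizes the resulting expression — via Proposition \ref{MBSO} read in reverse — as $V_{n,b}(p_1,\dots,p_n)$. As in the proof of Proposition \ref{lemGL}, the normalizing factors $\Gamma_{\R}(\sigma\pm b_j)$ and all the intermediate ratios of $\Gamma_{\R}$-factors are expected to cancel along the way (and one sees already that integrating out $r_1$ produces a factor $\Gamma_{\R}(\sigma-b_n)$ that cancels part of the normalization).

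I expect the main obstacle to lie in exactly this bookkeeping: choosing the correct order in which to eliminate the $r_j$, and how to interleave these eliminations with applications of the two parts of Lemma \ref{lemBarnes} to the $q$- and $w$-layers (and, should those not suffice, with a further auxiliary Barnes-type identity derived from \eqref{Barnes1st}), so that the output of the inductive hypothesis matches the $V_{n,b}$-recursion verbatim. Because the $SO_{2n+1}(\R)$ recursion carries two coupled layers of integration variables rather than the single layer appearing in the $GL_n(\R)$ case, there are roughly twice as many $\Gamma_{\R}$-factors to track, and verifying that every spurious factor — in particular the denominator $\Gamma_{\R}$'s produced by Barnes' first lemma — cancels is the delicate, computation-heavy heart of the argument.
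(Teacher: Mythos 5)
Your overall strategy matches the paper's: induction on $n$, substitute Proposition \ref{MBSO} to replace $V_{n,b}(r_1,\dots,r_n)$ with $V_{n-1,\widetilde b}$, reorganize via Barnes-type identities, invoke the inductive hypothesis, and recognize the result through Proposition \ref{MBSO} read in reverse. Your base case $n=1$ is also correctly verified via Barnes' first lemma.

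One step in your plan, however, would not go through as described. You propose to ``integrate out $r_1,\dots,r_n$'' by applying Barnes' first lemma \eqref{Barnes1st} directly to each $r_j$ (noting, correctly, that each $r_j$ sits in exactly four $\Gamma_{\R}$-factors). Doing that literally produces, for each $j$, a denominator $\Gamma_{\R}(q_{j-1}+q_j-w_{j-1}-w_j+\sigma-b_n)$ coupling the $q$- and $w$-layers, and there is no clean mechanism afterward to dissolve these denominators. The paper avoids this entirely: it applies Lemma \ref{lemBarnes}(1) to the $r$-layer, which is a \emph{reshuffling} identity (proved from Barnes' first lemma by integrating out both sides, but not itself a naive term-by-term integration). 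This reduces the $r$-variables from $n$ to $n-1$ while redistributing $\Gamma_{\R}$-factors without introducing new denominators, and -- crucially -- the $r$-variables are \emph{not} eliminated: they survive all the way to (\ref{RHSint4}), where, after the inductive hypothesis collapses the $w$- and $z$-integrals jointly (not the $z$-integral alone, as your description suggests), the remaining $(q,r)$-integral is exactly the shape of Proposition \ref{MBSO} for $V_{n,b}(p)$. So the bookkeeping you correctly anticipate as the delicate part is governed by a specific discipline: never integrate out a full layer of variables with Barnes' first lemma directly, but always reshuffle via Lemma \ref{lemBarnes}(1) or (2), so that when the induction hypothesis is finally applied the survivors reassemble into the MBSO recursion with no leftover denominators.
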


\bpf
We again proceed by induction  on $n$.  The case $n=1$ is an immediate consequence of Barnes' first lemma (\ref{Barnes1st}).

We now apply Proposition \ref{MBSO}, to find that the right hand side of (\ref{lemSOeq}) equals
\begin{align} \label{RHSint1}
\begin{split}
& \frac{1}{\prod_{j=1}^n \Gamma_{\R}(\sigma+b_j) \Gamma_{\R}(\sigma-b_j) }\cdot\frac{2^{-1} }{(4\pi i)^{4n-3}}
\int_{\scriptstyle q_1,\dotsc,q_{n-1} \atop \scriptstyle r_1,\dotsc,r_{n}}
\int_{\scriptstyle w_1,\dotsc,w_{n-1} \atop \scriptstyle z_1,\dotsc,z_{n-1}}
V_{n-1,\widetilde{b}}(z_1,\dotsc,z_{n-1})
\\
& \times
\biggl[ \prod_{j=1}^{n-1} \Gamma_{\R}(r_j-w_j) \Gamma_{\R}(r_j-w_{j-1}-b_n)
\Gamma_{\R}(w_j-z_j) \Gamma_{\R}(w_j-z_{j-1}+b_n) \biggr]
\\
& \times
\Gamma_{\R}(r_n-z_{n-1}+b_n) \Gamma_{\R}(r_n-w_{n-1}-b_n)
\\
& \times
\biggl[ \prod_{j=1}^{n-1} \Gamma_{\R}(p_j-q_j) \Gamma_{\R}(q_j-r_j) \biggr]
\biggl[ \prod_{j=1}^n \Gamma_{\R}(p_j-q_{j-1}+\sigma)\Gamma_{\R}(q_{j-1}-r_{j}+\sigma) \biggr]
\\
& \times
\Gamma_{\R}(p_n-r_n) \,
dz_1 \cdots dz_{n-1} \, dw_1 \cdots dw_{n-1} \,
dr_1 \cdots dr_{n} \, dq_1 \cdots dq_{n-1}.
\end{split}
\end{align}
%%% r_j %%%%
To the integral in the   $r_j$'s, in  (\ref{RHSint1}), we now apply   Lemma \ref{lemBarnes} (1), which yields:
\begin{align*}
& \frac{1}{(4\pi i)^{n}} \int_{r_1,\dotsc,r_n}
\biggl[\prod_{j=1}^{n-1} \Gamma_{\R}(r_j-w_j) \Gamma_{\R}(r_j-w_{j-1}-b_n)
\Gamma_{\R}(q_j-r_j) \Gamma_{\R}(q_{j-1}-r_{j}+\sigma) \biggr]
\\
& \times
\Gamma_{\R}(r_n-z_{n-1}+b_n) \Gamma_{\R}(r_n-w_{n-1}-b_n)
\Gamma_{\R}(p_n-r_n) \Gamma_{\R}(q_{n-1}-r_n+\sigma)
\,dr_1 \cdots dr_n
\\
& = \frac{ \Gamma_{\R} (\sigma-w_1) \Gamma_{\R}(q_1-b_n) \Gamma_{\R}(\sigma-b_n)
\Gamma_{\R}(p_n-z_{n-1}+b_n) }{\Gamma_{\R}(p_n-z_{n-1}+\sigma) }
\cdot \frac{1}{(4\pi i)^{n-1}} \int_{r_1,\dotsc,r_{n-1}}
\\
& \times
\biggl[ \prod_{j=1}^{n-2} \Gamma_{\R}(r_j-w_j) \Gamma_{\R}(r_j-w_{j+1}+\sigma)
\Gamma_{\R}(q_j-r_j) \Gamma_{\R}(q_{j+1}-r_j-b_n) \biggr]
\\
& \times
\Gamma_{\R}(r_{n-1}-w_{n-1}) \Gamma_{\R}(r_{n-1}-z_{n-1}+\sigma+b_n)
\Gamma_{\R}(q_{n-1}-r_{n-1}) \Gamma_{\R}(p_n-r_{n-1}-b_n)
\\
& \times
dr_1 \cdots dr_{n-1}
\\
& = \frac{ \Gamma_{\R}(\sigma-b_n) \Gamma_{\R}(p_n-z_{n-1}+b_n) }
{ \Gamma_{\R}(p_n-z_{n-1}+\sigma) }
\cdot \frac{1}{(4\pi i)^{n-1}} \int_{r_1,\dotsc,r_{n-1}}
\\
& \times
\biggl[ \prod_{j=1}^{n-1} \Gamma_{\R}(r_j-w_j) \Gamma_{\R}(r_{j-1}-w_{j}+\sigma)
\Gamma_{\R}(q_j-r_j) \Gamma_{\R}(q_{j}-r_{j-1}-b_n) \biggr]
\\
& \times \Gamma_{\R}(r_{n-1}-z_{n-1}+\sigma+b_n) \Gamma_{\R}(p_n - r_{n-1}-b_n)
\, dr_1 \cdots dr_{n-1}.
\end{align*}
Then (\ref{RHSint1}) equals
\begin{align}   
\begin{split}\label{RHSint2}
&\frac{1}{\prod_{j=1}^n \Gamma_{\R}(\sigma+b_j) \Gamma_{\R}(\sigma-b_j) }\cdot \frac{2^{-1} }{(4\pi i)^{4n-4}}
\int_{\scriptstyle q_1,\dotsc,q_{n-1} \atop \scriptstyle r_1,\dotsc,r_{n-1}}
\int_{\scriptstyle w_1,\dotsc,w_{n-1} \atop \scriptstyle z_1,\dotsc,z_{n-1}}
V_{n-1,\widetilde{b}}(z_1,\dotsc,z_{n-1})
\\
& \times
\biggl[ \prod_{j=1}^{n-1} \Gamma_{\R}(w_j-z_j) \Gamma_{\R}(w_j-z_{j-1}+b_n)
\Gamma_{\R}(r_j-w_j) \Gamma_{\R}(r_{j-1}-w_j+\sigma) \biggr]
\\
& \times
\Gamma_{\R}(p_1+\sigma)
\biggl[ \prod_{j=1}^{n-1} \Gamma_{\R}(p_j-q_j) \Gamma_{\R}(p_{j+1}-q_{j}+\sigma)
\Gamma_{\R}(q_j-r_j) \Gamma_{\R}(q_j-r_{j-1}-b_n) \biggr]
\\
& \times \frac{\Gamma_{\R}(\sigma-b_n) \Gamma_{\R}(p_n-z_{n-1}+b_n)
\Gamma_{\R}(r_{n-1}-z_{n-1}+\sigma+b_n) \Gamma_{\R}(p_n-r_{n-1}-b_n)}
{\Gamma_{\R}(p_n-z_{n-1}+\sigma)}
\\
& \times
dz_1 \cdots dz_{n-1} \, dw_1 \cdots dw_{n-1} \,
dr_1 \cdots dr_{n-1} \, dq_1 \cdots dq_{n-1}. 
 \end{split}
\end{align}

%%% w_j, q_j %%%%
We apply Lemma \ref{lemBarnes} (1) with
$ (c_j,d_j,\gamma,\delta) = (-z_j,r_j,\nu_n,\sigma) $
to the integratlin the $w_j$'s, and
Lemma \ref{lemBarnes} (2) with
$ (c_j,d_j,\gamma,\delta) =( p_j ,-r_j,\sigma, -\nu) $
to the integral in the $q_j $'s.
Then (\ref{RHSint2}) becomes
\begin{align}\begin{split}
& \frac{1}{\prod_{j=1}^n \Gamma_{\R}(\sigma+b_j) \Gamma_{\R}(\sigma-b_j) }\cdot\frac{2^{-1} }{(4\pi i)^{4n-5}}
\int_{\scriptstyle q_1,\dotsc,q_{n-1} \atop \scriptstyle r_1,\dotsc,r_{n-1}}
\int_{\scriptstyle w_1,\dotsc,w_{n-2} \atop \scriptstyle z_1,\dotsc,z_{n-1}}
V_{n-1,\widetilde{b}}(z_1,\dotsc,z_{n-1})
\\
& \times
\frac{ \Gamma_{\R}(\sigma-z_1) \Gamma_{\R}(r_1+b_n)
\Gamma_{\R}(\sigma+b_n) \Gamma_{\R}(r_{n-1}-z_{n-1}) }
{\Gamma_{\R}(r_{n-1}-z_{n-1} + \sigma +b_n) }
\\
& \times
\biggl[ \prod_{j=1}^{n-2}
\Gamma_{\R}(w_j-z_j) \Gamma_{\R}(w_j-z_{j+1}+\sigma)
\Gamma_{\R}(r_j-w_j) \Gamma_{\R}(r_{j+1}-w_j+b_n) \biggr]
\\
& \times
\frac{ \Gamma_{\R}(p_1-b_n) \Gamma_{\R}(p_n-r_{n-1}+\sigma) }
{ \Gamma_{\R}(p_1+\sigma) \Gamma_{\R}(p_n-r_{n-1}-b_n) }
\\
& \times
\biggl[ \prod_{j=1}^{n-1}
\Gamma_{\R}(p_j-q_j) \Gamma_{\R}(p_{j+1}-q_j-b_n)
\Gamma_{\R}(q_j-r_j) \Gamma_{\R}(q_j-r_{j-1}+\sigma) \biggr]
\\
& \times
\frac{ \Gamma_{\R}(\sigma-b_n) \Gamma_{\R}(p_1+\sigma)
\Gamma_{\R}(p_n-z_{n-1}+b_n) \Gamma_{\R}(p_n-r_{n-1}-b_n)}
{ \Gamma_{\R}(p_n-z_{n-1}+\sigma) }
\\
& \times
\Gamma_{\R}(r_{n-1}-z_{n-1}+\sigma+b_n)
\,dz_1 \cdots dz_{n-1} \, dw_1 \cdots dw_{n-2} \,
dr_1 \cdots dr_{n-1} \, dq_1 \cdots dq_{n-1}.\label{RHSint25}\end{split}
\end{align}
We rewrite (\ref{RHSint25}) as
\begin{align} \label{RHSint3}
\begin{split}
& \frac{1}{\prod_{j=1}^{n-1} \Gamma_{\R}(\sigma+b_j) \Gamma_{\R}(\sigma-b_j) }
\cdot \frac{ 2^{-1} }{(4\pi i)^{4n-5}}
\int_{\scriptstyle q_1,\dotsc,q_{n-1} \atop \scriptstyle r_1,\dotsc,r_{n-1}}
\int_{\scriptstyle w_1,\dotsc,w_{n-2} \atop \scriptstyle z_1,\dotsc,z_{n-1}}
V_{n-1,\widetilde{b}}(z_1,\dotsc,z_{n-1})
\\
& \times
\biggl[\prod_{j=1}^{n-2} \Gamma_{\R}(r_j-w_j) \Gamma_{\R}(r_j-w_{j-1}+b_n)
\Gamma_{\R}(q_j-r_j) \Gamma_{\R}(q_{j+1}-r_j+\sigma) \biggr]
\\
& \times
\Gamma_{\R}(r_{n-1}-z_{n-1}) \Gamma_{\R}(r_{n-1}-w_{n-2}+b_n)
\Gamma_{\R}(q_{n-1}-r_{n-1}) \Gamma_{\R}(p_n-r_{n-1}+\sigma)
\\
& \times
\frac{ \Gamma_{\R}(q_1+\sigma) \Gamma_{\R}(p_n-z_{n-1}+\nu_n) }
{ \Gamma_{\R}(p_n-z_{n-1}+\sigma) }
\biggl[ \prod_{j=1}^{n-2} \Gamma_{\R}( w_j-z_j) \biggr]
\\
& \times
\biggl[\prod_{j=1}^{n-1} \Gamma_{\R}(w_{j-1}-z_j+\sigma) \biggr]
\biggl[ \prod_{j=1}^{n-1} \Gamma_{\R}(p_j-q_j) \biggr]
\biggl[ \prod_{j=1}^n \Gamma_{\R}(p_j-q_{j-1}-b_n) \biggr]
\\
& \times dz_1 \cdots dz_{n-1} \, dw_1 \cdots dw_{n-2} \,
dr_1 \cdots dr_{n-1} \, dq_1 \cdots dq_{n-1}.
\end{split}
\end{align}
%%%% r_j again %%%%%%%

We apply Lemma \ref{lemBarnes} (2) to the integral in $ r_1,\dotsc, r_{n-1} $,to find that
(\ref{RHSint3})  equals
\begin{align}\begin{split}\label{RHSint4}
&\frac{1}{\prod_{j=1}^{n-1} \Gamma_{\R}(\sigma+b_j) \Gamma_{\R}(\sigma-b_j) }\cdot
\frac{ 2^{-1} }{(4\pi i)^{4n-5}}
\int_{\scriptstyle q_1,\dotsc,q_{n-1} \atop \scriptstyle r_1,\dotsc,r_{n-1}}
\int_{\scriptstyle w_1,\dotsc,w_{n-2} \atop \scriptstyle z_1,\dotsc,z_{n-1}}
V_{n-1,\widetilde{b}}(z_1,\dotsc,z_{n-1})
\\
& \times \Gamma_{\R}(q_1+b_n)
\biggl[ \prod_{j=1}^{n-1} \Gamma_{\R}(q_j-r_j) \biggr]
\biggl[ \prod_{j=1}^{n-2} \Gamma_{\R}(q_{j+1}-r_j+b_n) \biggr]
\Gamma_{\R}(p_n-r_{n-1}+b_n)
\\
& \times
\biggl[ \prod_{j=1}^{n-2} \Gamma_{\R}(r_j-w_j) \biggr]
\Gamma_{\R}(r_{n-1}-z_{n-1})
\biggl[ \prod_{j=1}^{n-1} \Gamma_{\R}(r_j-w_{j-1}+\sigma) \biggr]
\\
& \times
\biggl[ \prod_{j=1}^{n-2} \Gamma_{\R}( w_j-z_j) \biggr]
\biggl[\prod_{j=1}^{n-1} \Gamma_{\R}(w_{j-1}-z_j+\sigma) \Gamma_{\R}(p_j-q_j) \biggr]
\\
& \times \biggl[ \prod_{j=1}^n \Gamma_{\R}(p_j-q_{j-1}-b_n) \biggr]
dz_1 \cdots dz_{n-1} \, dw_1 \cdots dw_{n-2} \,
dr_1 \cdots dr_{n-1} \, dq_1 \cdots dq_{n-1}.
\\
& = \frac{1}{\prod_{j=1}^{n-1} \Gamma_{\R}(\sigma+b_j) \Gamma_{\R}(\sigma-b_j) }\cdot
\frac{ 2^{-1} }{(4\pi i)^{4n-5}}
\int_{\scriptstyle q_1,\dotsc,q_{n-1} \atop \scriptstyle r_1,\dotsc,r_{n-1}}
\int_{\scriptstyle w_1,\dotsc,w_{n-2} \atop \scriptstyle z_1,\dotsc,z_{n-1}}
V_{n-1,\widetilde{b}}(z_1,\dotsc,z_{n-1})
\\
& \times \biggl[ \prod_{j=1}^{n-2} \Gamma_{\R}(r_j-w_j) \Gamma_{\R}(w_j-z_j) \biggr]
\\
& \times
\biggl[ \prod_{j=1}^{n-1} \Gamma_{\R}(w_{j-1}-z_j+\sigma) \Gamma_{\R}(r_j-w_{j-1}+\sigma)
\biggr]
\Gamma_{\R}(r_{n-1}-z_{n-1})
\\
& \times
\biggl[ \prod_{j=1}^{n-1}
\Gamma_{\R}(p_j-q_j) \Gamma_{\R}(p_j-q_{j-1}-b_n) \Gamma_{\R}(q_j-r_j)
\Gamma_{\R}(q_j-r_{j-1}+b_n) \biggr]
\\
& \times
\Gamma_{\R}(p_n-r_{n-1}+b_n) \Gamma_{\R}(p_n-q_{n-1}-b_n)
\\
& \times dz_1 \cdots dz_{n-1} \, dw_1 \cdots dw_{n-2} \,
dr_1 \cdots dr_{n-1} \, dq_1 \cdots dq_{n-1}.\end{split}
\end{align}
%%%%%%%%%%%%%%%%
By the induction hypothesis, we can perform the integration
over the $ w_j $'s and $ z_j $'s.
Thus (\ref{RHSint4}) becomes
\begin{align*}
&  \frac{ 2^{-1} }{(4 \pi i)^{2n-2}}
\int_{\scriptstyle q_1,\dotsc,q_{n-1} \atop \scriptstyle r_1,\dotsc,r_{n-1}}
V_{n-1,\widetilde{b}}(r_1,\dotsc,r_{n-1})
\\
& \times
\biggl[ \prod_{j=1}^{n-1}
\Gamma_{\R}(p_j-q_j) \Gamma_{\R}(p_j-q_{j-1}-b_n) \Gamma_{\R}(q_j-r_j)
\Gamma_{\R}(q_j-r_{j-1}+b_n) \biggr]
\\
& \times
\Gamma_{\R}(p_n-r_{n-1}+b_n) \Gamma_{\R}(p_n-q_{n-1}-b_n)
\, dr_1 \cdots dr_{n-1} \, dq_1 \cdots dq_{n-1}
\\
& =  V_{n,b}(p_1,\dotsc,p_{n-1}).
\end{align*}
The last equality follows from
Proposition \ref{MBSO}, and   our proof is complete.
\epf

%%%%%%%%%%%%%%%%%%%%%%%%%%%%%%%%%%%%%%%%%%%%%%%%%%%%%%%%%%%%%%%%%%
%%%%%%%%%%%%%%%%%%%%%%%%%%%%%%%%%%%%%%%%%%%%%%%%%%%%%%%%%%%%%%%%%%
\section{archimedean zeta integrals on $GL_{n} \times GL_m $}
%%%%%%%%%%%%%%%%%%%%%%%%%%%%%%%%%%%%%%%%%%%%%%%%%%%%%%%%%%%%%%%%%%
%%%%%%%%%%%%%%%%%%%%%%%%%%%%%%%%%%%%%%%%%%%%%%%%%%%%%%%%%%%%%%%%%%

In this section, we explicitly calculate archimedean zeta integrals for degree $nm$ $L$-functions on
$GL_n \times GL_m $ when $ 0\le n-m \le 2$,
by applying some formulas given in the previous section.
When $ m = n$ or $n-1$, and in the case when the class one principal series representations in question are induced from characters trivial on the center of $GL_n$ and $GL_m$, the  second author \cite{St01} \cite{St02} has proved the coincidence of the
local zeta integrals and the local Langlands $L$-factors,
by way of a recursive relation between $ U_{n,a} $ and $ U_{n-2,a} $ obtained in \cite{St01}.
Here we give another proof, which uses a relation between $ U_{n,a} $ and $ U_{n-1,a} $,
as well as  Proposition \ref{lemGL} in \S 2.2 below.  The present proof will not require the assumption of trivial central characters that was stipulated in the earlier proof.

For general $n$ and $m$, Jacquet and Shalika \cite{JS3} have proved local functional equations for,
and non-vanishing of, the ratio of the archimedean zeta integral to the corresponding $L$-factor. If $ n>m+1 $,  it is not generally expected that the archimedean zeta integrals
should coincide with the local $L$-factors.   And indeed Hoffstein and Murty \cite{HM},   in the case of $ (n,m) = (3,1) $, have expressed the ratio of this local integral to this local $L$-function as a certain integral of Barnes type.

In this section we generalize the work of Hoffstein and Murty, by expressing this ratio of  archimedean zeta integral to archimedean $L$-factor, in the case of $ GL_n \times GL_{n-2}$ for any $n$, explicitly as a Barnes-type integral.

%%%%%%%%%%%%%%%%%%%%%%%%%%%%%%%%%%%%%%
\subsection{Barnes integral expressions for archimedean zeta integrals}
%%%%%%%%%%%%%%%%%%%%%%%%%%%%%%%%%%%%%%

We first recall the archimedean zeta integrals for the standard $L$-functions on $GL_n \times GL_m $.
Let $ W $ and $ W' $ be $ \psi $ and $ \psi^{-1} $-Whittaker functions
on $GL_n(\R) $ and $ GL_m(\R) $, respectively.
%That is, the relations
%\begin{align*}
% W(xg) & = \exp \Bigl(2\pi i \sum_{j=1}^{n-1} x_{j,j+1}\Bigr) W(g),
% \ \ \ x=(x_{j,k}) \in X_n(\R), \, g \in GL_n(\R); \\
% W'(xg) & = \exp \Bigl(-2\pi i \sum_{j=1}^{m-1} x_{j,j+1} \Bigr) W'(g),
% \ \ \ x=(x_{j,k}) \in X_m(\R), \, g \in GL_m(\R),
%\end{align*}
%hold.
Let $ X_n(\R) $ be the standard maximal unipotent subgroup
of $GL_n(\R) $ consisting of upper triangular unipotent matrices.
The archimedean zeta integrals we want to study are
\begin{align*}
Z(s, W, W') &=\begin{cases}\ds\int_{X_n(\R) \backslash GL_n(\R)}
W(g) W'(g) \Phi( e_n g) \, |\det g|^s \,dg&\mbox{ if } m=n,\\
\ds\int_{X_m(\R) \backslash GL_m(\R)}W \begin{pmatrix} g & \\ & 1_{n-m} \end{pmatrix}
 \,
W'(g) \, |\det g|^{s- {(n-m)}/{2}} \,dg &\mbox{ if } m<n.\end{cases}\end{align*}
Here $ \Phi $ is a Schwartz-Bruhat function on $\R^n$ defined by
$$
\Phi(x) = \exp \Bigl(-\pi \sum_{i=1}^n x_i^2 \Bigr)\quad (
x = (x_1,\dotsc, x_n ) \in \R^n), $$
and $ e_n = (0,\dotsc,0,1) \in \R^n $.

In the case $m\le n-2$, the  zeta integral\begin{align*}
 &Z^\vee  (s,W,W') \\& =
\int_{M_{n-m-1,m}(\R)} \int_{X_m(\R) \backslash GL_m(\R)}
 W\begin{pmatrix} g & & \\ x & 1_{n-m-1} & \\ & & 1 \end{pmatrix}\, W'(g)\, |\det g|^{s- {(n-m)}/{2}} \,dx\,dg\end{align*}
will also be of interest.

Now let us assume that $ W $ and $ W' $ are class one Whittaker functions. We write $W=W^A_{n,a}$ and $W'=\overline{W}^A_{m,a'}$, where $a=(a_1,a_2,\ldots, a_n)\in \C^n$, $a'=(a_1',a_2',\ldots, a_m')\in \C^m$, and $ \overline{W}_{m,a'}^A $ denotes the $\psi^{-1}$-Whittaker function
determined by
$ \overline{W}_{m,a'}^A (g) = \psi^{-1} (x(g)) W_{m,a'}^A(y(g)) $. 
We then define

\begin{align*}  I_{n,m;a,a'}(s)  =  Z(s, W_{n,a}^A, \overline{W}_{m,a'}^A)\end{align*} 
and  \begin{align*} 
{I}^\vee_{n,n-2;a,a'}(s)  =
Z^{\vee}(s,   (W_{n,a}^A)^{\vee},(\overline{W}_{n-2,a'}^A)^{\vee}).
 \end{align*} The  goals cited at the outset of this  section may now be described more explicitly, as follows:

\medskip\noindent{\bf 1.}  To show that $I_{n,m;a,a'}(s) $ equals the corresponding  local $L$-factor, which is a product of $(n-j)n$ Gamma functions, in the cases $m=n-j$ for $j=0$ or $1$;

\medskip\noindent{\bf 2.}  To show that $I_{n,n-2;a,a'}(s) $ equals the corresponding  local $L$-factor, which is a product of $(n-2)n$ Gamma functions, {\it times} a certain Barnes-type integral;   that $ I^\vee_{n-2,n;a,a'}(s) $ similarly equals an $L$-factor times a Barnes-type integral; and that the former Barnes-type integral becomes the latter under the replacement of $s$ by $1-s$.

The first of these goals will be accomplished in subsection 2.2 below -- see Theorem 2.1 -- and the second in subsection 2.3 -- see Theorems \ref{thmnminustwo} and \ref{GLunip}.  In the present subsection, we develop some Barnes-type integral expressions for $I_{n,m;a,a'}(s)$ and $ I^\vee_{n,n-2;a,a'}(s) $, to be of use in what follows.

\begin{prop}\label{Barnesforms} \begin{itemize}\item[(1)] We have
  \begin{align*}
I_{n,n;a,a'}(s)
& = \Gamma_{\R}(ns+|a|+|a'|)
\\
& \times
\frac{ 2^{n-1} }{ (2\pi i)^{n-1}}
\int_{\scriptstyle s_1,\dotsc,s_{n-1}}
U_{n,a}(s-s_1,2s-s_2,\dotsc, (n-1)s-s_{n-1})
\\
& \times U_{n,a'}(s_1,\dotsc,s_{n-1}) \,ds_1 \cdots ds_{n-1}.
\end{align*}

\item[(2)] For $m<n$, 
\begin{align*}
\begin{split}
 I_{n,m;a,a'}(s)  &= \frac{2^m}{(2\pi i)^{n-2}} 
   \int_{\scriptstyle s_1,\dotsc,s_{m-1} \atop \scriptstyle s_{m+1},\dotsc,s_{n-1}} 
 U_{n,a}(s-s_1, 2s-s_2, \dotsc, (m-1)s-s_{m-1},
\\
 & \qquad \qquad \qquad \qquad   ms+|a'| ,s_{m+1},\dotsc,s_{n-1})
\\
& \times 
 U_{m,a'}(s_1,\dotsc,s_{m-1}) 
 \,ds_1 \cdots ds_{m-1} ds_{m+1}\cdots ds_{n-1}.
\end{split}
\end{align*}

\item[(3)] We have  \begin{align*} 
{I}^\vee_{n,n-2;a,a'}(s)  =  \frac{1}{4\pi i} \int_{w} \frac{ I_{ n ,n-1,-a,a^\vee}\biggl(\ds\frac{3}{2}+  \frac{w+|a'|-1}{n-1}  \biggr)}{ \prod_{j=2}^{n-1}
\Gamma_{\R}(a_1^\vee-a_j^\vee+1) } \,dw,\end{align*} where
$ a^{\vee} = (a_1^{\vee},\dotsc, a_{n-1}^{\vee}) $ with
\begin{align}\begin{split}
a_1^{\vee} & = (n-2)\Bigl(-s+\frac{3}{2} + \frac{w+|a'|-1}{n-1} \Bigr);
\\
a_j^{\vee} & = -a_{j-1}' + s-\frac{3}{2} - \frac{w+|a'|-1}{n-1} \ \ \ (2 \le j \le n-1).\label{acheck}\end{split}
\end{align}

 \end{itemize}\end{prop}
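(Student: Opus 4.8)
The plan is to treat the three parts separately, obtaining parts (1) and (2) by direct manipulation of the zeta integrals via the Mellin--Barnes representations of the Whittaker functions recalled in \S 1.3, and obtaining part (3) by combining the contragredient formula from Corollary \ref{GLcont} with the unipotent-integration definition of $Z^\vee$.

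\emph{Parts (1) and (2).} First I would substitute the Iwasawa decomposition $g = x\,\alpha[y_1,\dotsc,y_n]\,k$ into the definition of $Z(s,W_{n,a}^A,\overline W_{m,a'}^A)$. Since both Whittaker functions are right $O(\cdot)$-invariant and the characters $\psi,\psi^{-1}$ cancel on the unipotent part, the integral collapses to an integral over the torus coordinates $y_1,\dotsc,y_{n-1}$ (in case $m=n$, also $y_n$, which interacts with $\Phi(e_n g)$ and $|\det g|^s$). Writing $W_{n,a}^A$ and $\overline W_{m,a'}^A$ via their inverse-Mellin (Mellin--Barnes) representations in terms of $U_{n,a}$ and $U_{m,a'}$ respectively, and interchanging the torus integral with the Barnes contour integrals, the torus integral over each $y_j$ becomes an elementary Mellin integral. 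For $j<m$ this produces a $\Gamma_\R$ factor that, after the shift $s_j \to js - s_j$, is absorbed back into $U_{n,a}$; for the ``middle'' variable $y_m$ (or $y_n$ in case $m=n$) the Gaussian $\Phi$ together with $|\det g|^s$ produces the overall $\Gamma_\R(ns+|a|+|a'|)$ (resp.\ $\Gamma_\R$ absorbed as the $m$-th slot $ms+|a'|$); for $m<j\le n-1$ the variable $y_j$ appears only in $W_{n,a}^A$, so integrating it out forces a delta-type identification that sets the corresponding Mellin variable equal to $s_j$, leaving those $s_j$ as free contour variables. Bookkeeping the $\rho$-shifts $y^{\rho,A}$, the power $y_n^{|a|}$, and the normalizing constants $2^{-1}$ from each $U$ yields the stated $2^{n-1}$ (resp.\ $2^m$) and $(2\pi i)^{-(n-1)}$ (resp.\ $(2\pi i)^{-(n-2)}$) prefactors. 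This is essentially the classical Rankin--Selberg unfolding at the real place, and I expect it to be routine once the shifts are tracked carefully.

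\emph{Part (3).} Here I would start from the definition
\[
I^\vee_{n,n-2;a,a'}(s) = Z^\vee\bigl(s,(W_{n,a}^A)^\vee,(\overline W_{n-2,a'}^A)^\vee\bigr),
\]
with $Z^\vee$ an integral over $M_{n-1-m,m}(\R)\times(X_m\backslash GL_m)$ for $m=n-2$, i.e.\ over a single extra column $x\in M_{1,n-2}(\R)=\R^{n-2}$ times the $GL_{n-2}$ torus. By Corollary \ref{GLcont}, $(W_{n,a}^A)^\vee = W_{n,-a}^A$ (extended by $\psi^{-1}$) and similarly for $W_{n-2,a'}^A$, so $Z^\vee$ is a zeta integral built from genuine class-one Whittaker functions with parameters $-a$ and $-a'$. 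The key move is to recognize the $\R^{n-2}$-integration over the extra unipotent block as exactly the operation that converts a $GL_n\times GL_{n-2}$ setup into a $GL_n\times GL_{n-1}$ one: integrating $W_{n,-a}^A$ over that block, against no additional character in the new variable, is (after the appropriate Mellin expansion) the $n\to n-1$ recursion, contributing one additional contour variable $w$. Concretely I would insert the Mellin--Barnes representation of $W_{n,-a}^A$ restricted to the relevant parabolic, carry out the Gaussian-type integral over $x\in\R^{n-2}$ (this is where the shift $\frac32 + (w+|a'|-1)/(n-1)$ originates, the $\frac32 = 1 + \frac12(n-m)$ with $n-m=2$ coming from the $|\det|^{s-(n-m)/2}$ normalization and the extra half-integral from the block size), and identify what remains as $I_{n,n-1;-a,a^\vee}$ evaluated at that shifted argument, divided by the normalizing product $\prod_{j=2}^{n-1}\Gamma_\R(a_1^\vee-a_j^\vee+1)$ coming from the $W$-versus-$J$ relation \eqref{WvsJ} applied in dimension $n-1$. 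The definition \eqref{acheck} of $a^\vee$ is then simply the parameter of the $GL_{n-1}$ principal series that arises when one peels off the top row of $\pi_{-a}^A$ in the sense of the association $a\mapsto\widetilde a$, rewritten in terms of $w$ and $|a'|$.

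\emph{Main obstacle.} The delicate point throughout is the precise tracking of the $\rho$-shifts, the $|\det|$-powers, and the factor $y_n^{|a|}$ through the several changes of variable, and — in part (3) — correctly matching the residual Gaussian integral over the extra column to the $n\to n-1$ transition so that the shift in the argument of $I_{n,n-1}$ and the formula \eqref{acheck} for $a^\vee$ come out exactly as stated; a sign or a factor of $\tfrac12$ error in the exponent propagates into every subsequent section. I would double-check part (3) by specializing to $n=3$ and comparing with the Hoffstein--Murty computation for $GL_3\times GL_1$.
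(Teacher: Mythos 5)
Your overall plan coincides with the paper's: for parts (1) and (2), Iwasawa decomposition reduces $Z$ to a torus integral, and the Mellin transforms $U_{n,a}$, $U_{m,a'}$ are brought in to produce the Barnes-type expression; for part (3), Corollary~\ref{GLcont} converts the contragredients to class-one Whittaker functions with parameters $-a,-a'$, an extra contour variable $w$ is introduced, and Proposition~\ref{GLrec2} is what allows one to recognize the result as $I_{n,n-1;-a,a^\vee}$ at a shifted argument.

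Two caveats worth flagging. First, in (1)/(2) you propose inserting \emph{both} inverse Mellin--Barnes representations and then integrating the $y_j$'s, relying on a ``delta-type identification''; this is only heuristic, since the resulting $y$-integrals diverge as iterated integrals. The paper avoids this by inserting one Mellin--Barnes representation and invoking the Mellin convolution theorem (the Mellin transform of a product equals the convolution of the individual Mellin transforms), which is the rigorous version of your delta argument; also note that for $m<n$ the variables $y_{m+1},\dotsc,y_{n-1}$ are not integrated out -- they are set to $1$ in the original integral and reintroduced via Mellin inversion, which is where the extra contour variables $s_{m+1},\dotsc,s_{n-1}$ come from. Second, in (3) you locate the origin of the new contour variable $w$ in the $\R^{n-2}$-integration over the extra unipotent column; in fact $w$ arises from a Mellin expansion in the (artificially restored) diagonal coordinate $y_{n-1}$, and it is Proposition~\ref{GLrec2} -- which introduces \emph{no} new contour variable -- that handles the $\R^{n-2}$-integration. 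The genuinely hard content of (3), which your sketch elides, is the explicit Iwasawa decomposition of $M(x_1,\dotsc,x_{n-2};1,\dotsc,1)$ together with the chain of substitutions $x_j\to x_jy_{n-1}$, $y_j\to p_{j+1}y_j/\sqrt{p_jp_{j+2}}$, and $x_j\to x_j'$ that are needed to cast the $x$-integral into the exact form of the right-hand side of (\ref{GLrec2int}) and thereby read off the parameter $a^\vee$ in (\ref{acheck}).
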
  

\begin{proof} We first consider part (1).   Since our Whittaker functions $ W_{n,a}^A $ and
$ \overline{W} _{n,a'}^A $ are
right $ O(n) $  invariant,   the Iwasawa decomposition implies that
\begin{align*}
I_{n,n;a,a'}(s) &  = Z(s, W_{n,a}^A, \overline{W}_{n,a'}^A)\\&= 2^n \int_{(\R_+)^n}
 \exp(-\pi y_n^2)
\\
& \times  {W}_{n,a}^A(\alpha[y_1,\dotsc, y_{n}])
 {W}_{n,a'}^A(\alpha[y_1,\dotsc, y_{n}])
\biggl[ \prod_{j=1}^n y_j^{js-j(n-j)} \frac{dy_j}{y_j} \biggr]\\&=
 2^n \int_{(\R_+)^n}
y_n^{|a| + |a'| } \exp(-\pi y_n^2)
\\
& \times \widehat{W}_{n,a}^A(\alpha[y_1,\dotsc, y_{n-1},1])
\widehat{W}_{n,a'}^A(\alpha[y_1,\dotsc, y_{n-1},1])
\biggl[ \prod_{j=1}^n y_j^{js} \frac{dy_j}{y_j} \biggr].
\end{align*}
The integral in  $ y_n $ equals
$ 2^{-1} \Gamma_{\R}(ns+|a|+|a'|) $. Applying 
  Mellin inversion and the ``convolution theorem'' -- which says that the Mellin transform of a product equals the convolution of the corresponding Mellin transforms -- to the integral in $y_1,y_2,\ldots, y_{n-1}$, above, then yields the stated formula for $I_{n,n,a,a'}(s)$.

We now prove part (2). In this case, the Iwasawa decomposition  gives us
\begin{align}\begin{split}\label{Inem}
& I_{n,m;a,a'}(s)= Z(s, W_{n,a}^A, \overline{W}_{m,a'}^A)
\\
&
= 2^m \int_{(\R_+)^m} W_{n,a}^A(\alpha[y_1,\dotsc,y_m,1,\dotsc,1]) W_{m,a'}^A(\alpha[y_1,\dotsc, y_m])
\biggl[ \prod_{j=1}^{m} y_j^{j(s- {(n-m)}/{2})-j(m-j)}
\frac{dy_j}{y_j} \biggr]
\\
& = 2^m \int_{(\R_+)^m}
\widehat{W}_{n,a}^A(\alpha[y_1,\dotsc, y_m,1,\dotsc,1])
\widehat{W}_{m,a'}^A(\alpha[y_1, \dotsc, y_m])
\biggl[ \prod_{j=1}^m y_j^{js} \frac{dy_j}{y_j} \biggr].
\end{split}\end{align}
Now, for $ y_j \in\R_+ $ $ (m+1 \le j \le n-1) $, we set
\begin{align*}
&  Z(s,W_{n,a}^A,W_{m,a'}^A; y_{m+1}, \dotsc, y_{n-1})
\\
&  = 2^m \int_{(\R_+)^m}
\widehat{W}_{n,a}^A(\alpha[y_1,\dotsc, y_m,y_{m+1},\dotsc,y_{n-1},1])
\widehat{W}_{m,a'}^A(\alpha[y_1,\dotsc,y_m])
\biggl[ \prod_{j=1}^{m} y_j^{js}
\frac{dy_j}{y_j} \biggr],
\end{align*}so that $$ I_{n,m;a,a'}(s)= Z(s,W_{n,a}^A,W_{m,a'}^A;1,\dotsc,1).$$Applying Mellin inversion to the right hand side yields\begin{align} \label{GLnGLmMellin}
\begin{split}
I_{n,m;a,a'}(s)
& = \frac{1}{(2\pi i)^{n-m-1}} \int_{s_{m+1},\dotsc,s_{n-1}}
 Z(s_{m+1}, \dotsc, s_{n-1}) \,ds_{m+1} \cdots ds_{n-1}
,
\end{split}
\end{align}
where
\begin{align*}
&  Z(s_{m+1},\dotsc, s_{n-1})
\\
&  =
\int_{(\R_+)^{n-m-1}}
 Z(s,W_{n,a}^A,W_{m,a'}^A; y_{m+1}, \dotsc, y_{n-1})
\biggl[ \prod_{j=m+1}^{n-1} y_j^{s_j} \frac{dy_j}{y_j} \biggr]
\\
& = 2^m \int_{(\R_+)^{n-1}}
\widehat{W}_{n,a}^A(\alpha[y_1,\dotsc,y_{n-1},1])
\widehat{W}_{m,a'}^A(\alpha[y_1,\dotsc, y_{m}])
\biggl[ \prod_{j=1}^{m} y_j^{js} \frac{dy_j}{y_j} \biggr]
\biggl[ \prod_{j=m+1}^{n-1} y_j^{s_j} \frac{dy_j}{y_j} \biggr]
\\
& = 2^m \int_{(\R_+)^{n-1}}
\widehat{W}_{n,a}^A(\alpha[y_1,\dotsc,y_{n-1},1])
\widehat{W}_{m,a'}^A(\alpha[y_1,\dotsc, y_{m-1},1])
\biggl[ \prod_{j=1}^{m} y_j^{js} \frac{dy_j}{y_j} \biggr] y_m^{|a'|} 
\biggl[ \prod_{j=m+1}^{n-1} y_j^{s_j} \frac{dy_j}{y_j} \biggr]
\\ &= \frac{2^m}{(2\pi i)^{m-1}} 
   \int_{  s_1,\dotsc,s_{m-1} } 
 U_{n,a}(s-s_1, 2s-s_2, \dotsc, (m-1)s-s_{m-1},
 ms+|a'| ,s_{m+1},\dotsc,s_{n-1})
\\
& \times 
 U_{m,a'}(s_1,\dotsc,s_{m-1}) 
 \,ds_1 \cdots ds_{m-1}  ,
\end{align*}
since, again, the Mellin transform of a product is the convolution of the Mellin transforms.  So we conclude from (\ref{GLnGLmMellin}) that
\begin{align}
\begin{split}
 I_{n,m;a,a'}(s)  &= \frac{2^m}{(2\pi i)^{n-2}} 
   \int_{\scriptstyle s_1,\dotsc,s_{m-1} \atop \scriptstyle s_{m+1},\dotsc,s_{n-1}} 
 U_{n,a}(s-s_1, 2s-s_2, \dotsc, (m-1)s-s_{m-1},
\\
 & \qquad \qquad \qquad \qquad   ms+|a'| ,s_{m+1},\dotsc,s_{n-1})
\\
& \times 
 U_{m,a'}(s_1,\dotsc,s_{m-1}) 
 \,ds_1 \cdots ds_{m-1} ds_{m+1}\cdots ds_{n-1},\label{GLnGLmMellin2}
\end{split}
\end{align}as required.

Finally, we prove part (3).  We have 
 \begin{align*} 
{I}^\vee_{n,n-2;a,a'}(s) & =
Z^{\vee}(s,   (W_{n,a}^A)^{\vee},(\overline{W}_{n-2,a'}^A)^{\vee})
\\
& = 2^{n-2} \int_{(\R_+)^{n-2}} \int_{\R^{n-2}}
(W_{n,a}^A)^{\vee}( M(x_1,\dotsc,x_{n-2}; y_1,\dotsc,y_{n-2},1) )
\\
& \times
(\overline{W}_{n-2,a'}^A)^{\vee} (\alpha[y_1,\dotsc, y_{n-2}]) \biggl[ \prod_{j=1}^{n-2}  dx_j \, y_j^{j(s-n+j+1)} \,\frac{dy_j}{y_j}\biggr],
\end{align*}
where
$$
M(x_1,\dotsc,x_{n-2};y_1,\dotsc, y_{n-1})
= \begin{pmatrix}
\overline{y} & & \\ x & y_{n-1} & \\ & & 1 \end{pmatrix},
$$
with $ x = (x_1, \dotsc, x_{n-2}) $ and $ \overline{y} = y_{n-1} \cdot \alpha[y_1,\dotsc, y_{n-2 }] $.  Then Mellin inversion implies
\begin{align*}
{I}^\vee_{n,n-2;a,a'}(s) = \frac{1}{2\pi i} \int_{w} I^\vee_{n,n-2;a,a'}(s,w) \,dw,
\end{align*}
where
\begin{align}\begin{split}\label{ItildeDef}
 I^\vee_{n,n-2;a,a'}(s,w) &
 = 2^{n-2} \int_{(\R_+)^{n-1}} \int_{\R^{n-2}} 
     (W_{n,a}^A)^{\vee}( M(x_1,\dotsc,x_{n-2}; y_1,\dotsc,y_{n-2},y_{n-1}) ) 
\\
& \times 
     (\overline{W}_{n-2,a'}^A)^{\vee}(\alpha[y_1,\dotsc, y_{n-2}])   y_{n-1}^w\biggl[ \prod_{j=1}^{n-2} dx_j \, y_j^{j(s-n+j+1)}   \, \frac{dy_j}{y_j}\biggr]\frac{dy_{n-1}}{y_{n-1}}.\end{split}
\end{align}To complete the proof of part (3) of the present proposition, then, it will suffice to demonstrate:

\medskip\noindent{\bf Claim.}  {\it 
\begin{align*}
I^\vee_{n,n-2;a,a'}(s,w)=\frac{I_{ n,n-1,-a,a^\vee}\biggl(\ds\frac{3}{2}+  \frac{w+|a'|-1}{n-1}  \biggr)}{ 2\prod_{j=2}^{n-1}
\Gamma_{\R}(a_1^\vee-a_j^\vee+1) }, \end{align*}with $a^\vee=(a_1^{\vee},\dotsc, a_{n-1}^{\vee}) $ as in \hbox{\rm (\ref{acheck}).}}

 \medskip\noindent{\it Proof of Claim.} Noting that\begin{align*}
M(x_1,\dotsc,x_{n-2}; y_1,\dotsc,y_{n-1})
& = \alpha[y_1,\dotsc,y_{n-1},1]\cdot
M \Bigl( \frac{x_1}{y_{n-1}}, \dotsc, \frac{x_{n-2}}{y_{n-1}};1,\dotsc,1 \Bigr),
\end{align*}we substitute $ x_j \to   x_j y_{n-1} $ $ (1 \le j \le n-2) $ into (\ref{ItildeDef}), to get
\begin{align}\begin{split} \label{ItildeDef2}
& I^\vee_{n,n-2;a,a'}(s,w) \\&
 = 2^{n-2} \int_{(\R_+)^{n-1}} \int_{\R^{n-2}} 
     (W_{n,a}^A)^{\vee}\bigl(  \alpha[y_1,\dotsc,y_{n-1},1]\cdot
M ( {x_1} , \dotsc,  {x_{n-2}} ;1,\dotsc,1  )\bigr) 
\\
& \times 
     (\overline{W}_{n-2,a'}^A)^{\vee}(\alpha[y_1,\dotsc, y_{n-2}])   y_{n-1}^{w+n-2}\biggl[ \prod_{j=1}^{n-2} dx_j \, y_j^{j(s-n+j+1)}   \, \frac{dy_j}{y_j}\biggr]\frac{dy_{n-1}}{y_{n-1}}.\end{split}
\end{align}
Next, we check that $ g=M(x_1,\dotsc,x_{n};1,\dotsc,1) $ has Iwasawa decomposition
$g= x(g) y(g) k(g),$ where
\begin{align*}
x(g) = (x(g)_{j,k})_{1 \le j,k \le n}\quad \hbox{with}\quad  x(g)_{j,k} & =
\begin{cases} 1 & 1 \le j=k \le n, \\
-x_j x_k/p_k & 1 \le j<k \le n-2, \\
x_j/p_{n-1} & 1 \le j \le n-2, k= n-1, \\
0 & \mbox{otherwise}; \end{cases}
\end{align*}
\begin{align*}
y(g) &= p\biggl[ \frac{\sqrt{p_1 p_3}}{p_2}, \frac{\sqrt{p_2 p_4}}{p_3}, \dotsc,
\frac{ \sqrt{p_{n-2}p_n}}{p_{n-1}},
\sqrt{p_{n-1}}, 1 \biggr];
\end{align*}
and 
\begin{align*}   k(g) = (k(g)_{j,k})_{1 \le j,k \le n} \quad \hbox{with} \quad
k(g)_{j,k} & = \begin{cases}
\sqrt{p_j} / \sqrt{ p_{j+1} } & 1 \le j = k \le n-2; \\
1 / \sqrt{p_{n-1}} & j=k = n-1; \\
1 & j=k = n; \\
-x_j x_k / \sqrt{p_j p_{j+1}} & 1 \le k < j \le n-2; \\
-x_j / \sqrt{p_j p_{j+1}} & 1 \le j \le n-2, k=n-1; \\
x_k / \sqrt{p_{n-1}} & j=n-1, 1 \le k \le n-2; \\
0 & \mbox{otherwise}. \end{cases}
\end{align*}
Here $ p_j = 1 + \sum_{p=1}^{j-1} x_p^2 $ for $ 1 \le j \le n-1, $ and $ p_n = 1 $.

So (\ref{ItildeDef2}), together with Corollary \ref{GLcont} and the substitutions $x_j\to (-1)^{n-j-2}x_j$ for $1\le j\le n-2$,  give
\begin{align*}
I^\vee_{n,n-2;a,a'}(s,w)
& = 2^{n-2} \int_{(\R_+)^{n-1}} \int_{\R^{n-2}}
\exp \biggl\{ -2\pi i \biggl( \sum_{j=1}^{n-3} \frac{x_j x_{j+1}}{p_{j+1}} y_j
+ \frac{x_{n-2}}{p_{n-1}} y_{n-2} \biggr) \biggr\}
\\
& \times
W_{n,-a}^A \biggl( \alpha \biggl[ \frac{ \sqrt{p_1 p_3}}{p_2} y_1, \dotsc,
\frac{\sqrt{p_{n-2}p_n}}{p_{n-1}} y_{n-2}, \sqrt{p_{n-1}} y_{n-1}, 1 \biggr] \biggr)
\\
& \times W_{n-2,-a'}^A (\alpha[y_1,\dotsc,y_{n-2}])
\\
& \times \biggl[ \prod_{j=1}^{n-2} y_j^{j(s-n+j+1)} \biggr] y_{n-1}^{w+n-2}
\prod_{j=1}^{n-2} dx_j \prod_{j=1}^{n-1} \frac{dy_j}{y_j}.
\end{align*}
We now substitute
$$
y_j \to \frac{p_{j+1}}{\sqrt{p_j p_{j+2}}} y_j \ (1 \le j \le n-2),
\ y_{n-1} \to \frac{1}{\sqrt{p_{n-1}}} y_{n-1},
$$
and collect the powers of the $ y_j $'s and $ q_j $'s, to find that
\begin{align*}
I^\vee_{n,n-2;a,a'}(s,w)
& = 2^{n-2} \int_{(\R_+)^{n-1}} \int_{\R^{n-2}}
\exp \biggl\{ -2\pi i \biggl( \sum_{j=1}^{n-3} \frac{x_j x_{j+1}}{\sqrt{p_jp_{j+2}}} y_j
+ \frac{x_{n-2}}{\sqrt{p_{n-2}}} y_{n-2} \biggr) \biggr\}
\\
& \times \widehat{W}_{n,-a}^A( \alpha[y_1,\dotsc, y_{n-1},1])
\\
& \times \widehat{W}_{n-2,-a'}^A
\biggl(a \biggl[ \frac{p_2}{\sqrt{p_1p_3}} y_1, \dotsc,
\frac{p_{n-2}}{\sqrt{p_{n-3}p_{n-1}}} y_{n-3}, \frac{p_{n-1}}{\sqrt{p_{n-2}}} y_{n-2}
\biggr] \biggr)
\\
& \times \biggl[ \prod_{j=1}^{n-2} y_j^{js} \biggr] y_{n-1}^{w+  {(3n-5)}/{2}}
p_{n-1}^{ {(n-1)(2s-3)}/{4} -  {w}/{2}} \prod_{j=1}^{n-2} \frac{dx_j}{\sqrt{p_j}}
\prod_{j=1}^{n-1} \frac{dy_j}{y_j}.
\end{align*}

Making the change of variables $ (x_1,\dotsc,x_{n-2}) \to (x_1',\dotsc,x_{n-2}') $ given by
$$
x_j' = x_j \sqrt{ \frac{p_{n-1}}{p_j p_{j+1}}},
$$
we find that
$$
x_j = x_j' \sqrt{ \frac{r_1}{r_j r_{j+1}} },
\ \ p_j = \frac{r_1}{r_j}, \ \ \mbox{ and }
\ \ \prod_{j=1}^{n-2} \frac{dx_j}{\sqrt{p_j}} = \prod_{j=1}^{n-2} \frac{dx_j'}{\sqrt{r_{n-j}}},
$$
where
$ r_j = 1 + \sum_{p=j}^{n-2} (x_p')^2 $ for $ 1 \le j \le n-2 $ and $ r_{n-1} = 1 $.
Then we get
\begin{align*}
I^\vee_{n,n-2;a,a'}(s,w)
& = 2^{n-2} \int_{(\R_+)^{n-1}} \int_{\R^{n-2}}
\exp \biggl\{ -2\pi i \biggl( \sum_{j=1}^{n-3} \frac{x_j' x_{j+1}'}{r_{j+1} } y_j
+ x_{n-2}' y_{n-2} \biggr) \biggr\}
\\
& \times \widehat{W}_{n,-a}^A( \alpha[y_1,\dotsc, y_{n-1},1])
\\
& \times \widehat{W}_{n-2,-a'}^A
\biggl(\alpha\biggl[ \frac{\sqrt{r_1 r_3}}{r_2} y_1, \dotsc,
\frac{ \sqrt{r_{n-3}r_{n-1}}}{r_{n-2}} y_{n-3},
\sqrt{ r_{n-2}} y_{n-2} \biggr] \biggr)
\\
& \times
\biggl[ \prod_{j=1}^{n-2} y_j^{js} \biggr]
y_{n-1}^{w+  {(3n-5)}/{2}}
r_1^{ {(n-1)(2s-3)}/{4} -  {(w+|a'|)}/{2}} \prod_{j=2}^{n-2} r_j^{- 1/2}
\prod_{j=1}^{n-2} dx_j'
\prod_{j=1}^{n-1} \frac{dy_j}{y_j}.
\end{align*}
Here, we used the fact that $ \widehat{W}_{n-2,-a'}^A (\alpha[y_1,\dotsc, \sqrt{r_1} y_{n-2}])
= r_1^{-|a'|/2} \widehat{W}_{n-2,-a'}^A (\alpha[y_1,\dotsc, y_{n-2}]) $.
Using Proposition \ref{GLrec2}, we can evaluate the integral  in
the variables $ dx_j' $ as a class one Whittaker function on $GL_{n-1}(\R) $; the result is:
\begin{align*}
I^\vee_{n,n-2;a,a'}(s,w) & =
\frac{ 2^{n-2}}{ \prod_{2 \le j \le n-1} \Gamma_{\R}(a_1^{\vee}-a_j^{\vee}+1) }
\\
& \times
\int_{(\R_+)^{n-1}} \widehat{W}_{n,-a}^A(\alpha[y_1,\dotsc, y_{n-1},1])
\widehat{W}_{n-1,a^{\vee}}^A(\alpha[y_1,\dotsc,y_{n-1}])
\\
& \times \biggl[ \prod_{j=1}^{n-2} y_j^{j(s + {a_1^{\vee}}/{(n-2)})} \biggr]
y_{n-1}^{w+ {(3n-5)}/{2}-|a^{\vee}|} \prod_{j=1}^{n-1} \frac{dy_j}{y_j}.
\end{align*}
Now $ |a^{\vee}| = -|a'| $, so that
\begin{align*}
I^\vee_{n,n-2;a,a'}(s,w)
& = \frac{2^{n-2}}{ \prod_{2 \le j \le n-1} \Gamma_{\R}(a_1^{\vee}-a_j^{\vee}+1)}
\\
& \times \int_{(\R_+)^{n-1}} \widehat{W}_{n,-a}^A(\alpha[y_1,\dotsc, y_{n-1},1])
\widehat{W}_{n-1,a^{\vee}}^A(\alpha[y_1,\dotsc,y_{n-1}])
\\
& \times \prod_{j=1}^{n-1} y_j^{j( {3}/{2}+  {(w+|a'|-1)}/{(n-1)}) }
\prod_{j=1}^{n-1} \frac{dy_j}{y_j}
\\& =\frac{I_{ n,n-1,-a,a^\vee}\biggl(\ds\frac{3}{2}+  \frac{w+|a'|-1}{n-1}  \biggr)}{2 \prod_{j=2}^{n-1}
\Gamma_{\R}(a_1^\vee-a_j^\vee+1) },
\end{align*}
the last step by (\ref{Inem}), and our claim is proved.  This, in turn, completes the proof of our proposition.

\end{proof}
\medskip
%%%%%%%%%%%%%%%%%%%%%%%%%%%%%%%%%%%%%%
\subsection{The cases $ m=n$ and $m=n-1 $}
%%%%%%%%%%%%%%%%%%%%%%%%%%%%%%%%%%%%%%

In  \cite[Theorem 3.4]{St01} and \cite[Theorem 1.1]{St02}, the second author of the present work has proved the following theorem, under the assumption that the characters $\chi_a^A$ and $\chi_{a'}^A$ (cf. section 1.1 above) are trivial on scalar matrices.  Here we present a new proof that  requires no such stipulation,  and that is simplified by the use of new formulas, recently derived by the authors in \cite{ISt}, relating Whittaker functions on $GL(n,\R)$ to those on $GL(n-1,\R)$. (Earlier proofs of these results used different recursive formulas, relating $GL(n,\R)$ Whittaker functions to those on $GL(n-2,\R)$, cf. \cite{St90}).)

%%%%%%% GL(n) \times GL(n) , GL(n) \times GL(n-1) %%%%%%%%%%
\begin{thm}\label{GLGL}
\begin{itemize}
\item[(1)] For $ a = (a_1,\dotsc,a_n) $ and $ a' = (a_1',\dotsc, a_{n-1}') $,
we have
\begin{align*}
I_{n,n-1;a,a'}(s)
= \prod_{1 \le j \le n, 1 \le k \le n-1} \Gamma_{\R}(s+a_j+a_k').
\end{align*}
\item[(2)] For $ a=(a_1,\dotsc,a_n) $ and $ a'= (a_1',\dotsc,a_n') $,
we have
\begin{align*}
I_{n,n;a,a'}(s)
= \prod_{1 \le j \le n, 1 \le k \le n} \Gamma_{\R}(s+a_j+a_k').
\end{align*}
\end{itemize}
\end{thm}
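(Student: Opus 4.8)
The plan is to prove both parts simultaneously by induction on $n$, using the Barnes-integral expressions for the zeta integrals established in Proposition \ref{Barnesforms} together with the recursive identities for the Mellin transforms $U_{n,a}$ recorded in Proposition \ref{MBGL} and, crucially, the ``self-reproducing'' identities of Proposition \ref{lemGL}. The base case $n=1$ (for part (1)) and $n=2$ are elementary: for $n=1$, $I_{1,0}$ is the Mellin transform of a Gaussian against $\Phi(e_1 g)$, giving $\Gamma_{\R}(s+a_1+a_1')$ after matching normalizations; for $n=2$, one uses $U_{2,(a_1,a_2)}(s)=2^{-1}\Gamma_{\R}(s+a_1)\Gamma_{\R}(s+a_2)$ and Barnes' first lemma (\ref{Barnes1st}) directly in the formulas of Proposition \ref{Barnesforms}.

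For the inductive step I would treat part (1) first. Starting from Proposition \ref{Barnesforms}(2) with $m=n-1$, I get
\begin{align*}
I_{n,n-1;a,a'}(s) &= \frac{2^{n-1}}{(2\pi i)^{n-2}}\int_{s_1,\dots,s_{n-2}} U_{n,a}\bigl(s-s_1,\dots,(n-2)s-s_{n-2},(n-1)s+|a'|\bigr)\\ &\qquad\times U_{n-1,a'}(s_1,\dots,s_{n-2})\,ds_1\cdots ds_{n-2}.
\end{align*}
The idea is to expand $U_{n-1,a'}$ via Proposition \ref{lemGL}(1) (or (2)), introducing a new layer of Barnes variables, and to expand $U_{n,a}$ via Proposition \ref{MBGL} so that it is written in terms of $U_{n-1,\widetilde a}$; one then wants to integrate out the intermediate variables using Barnes' first lemma and Lemma \ref{lemBarnes}, peeling off exactly the Gamma factors $\prod_{j}\Gamma_{\R}(s+a_j+a_n')$ (say), and arriving at an expression that matches $I_{n-1,\widetilde a;\widetilde{a'}}(s')$ for an appropriately shifted argument $s'$ and shifted parameters. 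Invoking the induction hypothesis then produces $\prod_{1\le j\le n-1,1\le k\le n-1}\Gamma_{\R}(s'+\widetilde a_j+\widetilde{a'}_k)$, and one checks — using $|a|=|\widetilde a|$, $\widetilde a_j=a_{j+1}+a_1/(n-1)$, and the analogous relation for $a'$ — that the shifts conspire so that $s'+\widetilde a_j+\widetilde{a'}_k = s+a_{j+1}+a'_{k+1}$, thereby recovering all the remaining Gamma factors. Part (2) follows from part (1) by the same mechanism applied to Proposition \ref{Barnesforms}(1): expanding one $U_{n,a'}$ by Proposition \ref{lemGL} and recognizing the resulting convolution as $I_{n,n-1}$ with one parameter of $a'$ played off against the $\Gamma_{\R}(ns+|a|+|a'|)$ prefactor.

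The main obstacle will be the bookkeeping in the Barnes-integral manipulations: one must choose the auxiliary parameter $\sigma$ in Proposition \ref{lemGL} correctly (it will be some affine function of $s$, $|a|$, $|a'|$), track the shifts $s_j\mapsto js-s_j$ versus the arguments expected by $U_{n-1}$, and verify that every application of Lemma \ref{lemBarnes} has its hypotheses met and produces precisely the Gamma-factor ratio needed — with all spurious Gamma quotients cancelling. A secondary subtlety is justifying the interchange of the (multifold) Barnes contour integrals and the convergence of the Mellin–Barnes representations in a common strip of $\mathrm{Re}(s)$; this is standard given the rapid decay of Whittaker functions and Gamma functions on vertical lines, and can be handled by the usual estimates (shifting contours so that increasing and decreasing pole sequences are separated, as in the definition of Mellin–Barnes integrals in \S1.3). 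Once the contour and convergence issues are dispatched, the proof is a finite sequence of applications of (\ref{Barnes1st}) and Lemma \ref{lemBarnes}, and the identification of the shifted parameters with $\widetilde a,\widetilde{a'}$ is a direct computation.
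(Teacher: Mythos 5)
Your proposal is correct and takes essentially the same route as the paper: the paper isolates your proposed inductive step into Proposition \ref{reGLGL}, which reduces $I_{n,n;a,a'}$ to $I_{n,n-1;a,\widetilde{a}'}$ and $I_{n,n-1;a,a'}$ to $I_{n-1,n-1;\widetilde{a},a'}$ by applying Proposition \ref{MBGL} to one $U$-factor and then using Proposition \ref{lemGL} ``in reverse'' to collapse the resulting $s_j$-integral, peeling off a single row of Gamma factors each time; iterating and inducting on $n$ then gives both parts. (Minor slips: your $n=1$ base-case remark conflates $I_{1,1}$ with the degenerate $I_{1,0}$, and the notation $I_{n-1,\widetilde a;\widetilde{a'}}$ and the index range $1\le k\le n-1$ in the invoked induction hypothesis don't quite line up with the component counts — but these don't affect the soundness of the strategy.)
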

%%%%%%%%%%%%%%%%%%%%%%%%%%%%%%%%%%%%%%%%%%%%%%%%%%%%%%%%%%%%%

\begin{proof} Our proof here is based on a relation between  
$ I_{n,n;a,a'}(s) $ and $ I_{n,n-1;a,a'}(s) $, and another between  
$ I_{n,n-1;a,a'}(s) $ and $ I_{n,n-1-1;a,a'}(s) $, as follows.

%%%%%%%%% relations of GLGL integrals %%%%%%%%%%%%%
\begin{prop} \label{reGLGL}
\begin{itemize}
\item[(1)] For $ a = (a_1,\dotsc,a_n) $ and $ a' = (a_1',\dotsc,a_n') $,
we have
\begin{align*}
I_{n,n;a,a'}(s)
= \biggl[ \prod_{j=1}^n \Gamma_{\R}(s+a_1'+a_j) \biggr]
I_{n,n-1;a,\widetilde{a}' }\Bigl(s-\frac{a_1'}{n-1} \Bigr),
\end{align*}
where $ \widetilde{a}' = (\widetilde{a}_1',\dotsc,\widetilde{a}_{n-1}') $ with
$ \widetilde{a}'_j = a'_{j+1} + a_1'/(n-1) $.
\item[(2)]
For $ a = (a_1, \dotsc,a_n) $ and $ a' = (a_1',\dotsc,a_{n-1}') $, we have
\begin{align*}
I_{n,n-1;a,a'}(s)
= \biggl[ \prod_{j=1}^{n-1} \Gamma_{\R}(s+a_1+a_j') \biggr]
I_{n,n-1-1;\widetilde{a},a'} \Bigl(s-\frac{a_1}{n-1} \Bigr),
\end{align*}
where $ \widetilde{a} = (\widetilde{a}_1,\dotsc,\widetilde{a}_{n-1}) $ with
$ \widetilde{a}_j = a_{j+1} + a_1/(n-1) $.
\end{itemize}
\end{prop}

%%%%%%%%%%%%%%%%%%%%%%%%%%%%%%%%%%%%%%%%%%%%%%%%%%%%%%%

\medskip\noindent{\it Proof of Proposition.}  Let us show (2); the proof of (1) is similar.

By Propositions \ref{Barnesforms} (2) (in the case $m=n-1$) and \ref{MBGL}, we have
\begin{align*}
I_{n,n-1;a,a'}(s)
& = \frac{ 1 }{(2\pi i)^{2n-4}}
\int_{\scriptstyle s_1,\dotsc,s_{n-2} \atop \scriptstyle z_1,\dotsc,z_{n-2}}
U_{n-1,a'}(s_1,\dotsc,s_{n-2})
\, U_{n-1,\widetilde{a}}(z_1,\dotsc,z_{n-2})
\\
& \times
\biggl[ \prod_{j=1}^{n-1}
\Gamma_{\R} \Bigl( js-s_j-z_{j-1}+ \frac{(n-j)a_1}{n-1} \Bigr)
\Gamma_{\R} \Bigl( js-s_j-z_j - \frac{ja_1}{n-1} \Bigr) \biggr]
\\
& \times dz_1 \cdots dz_{n-2} \,ds_1 \cdots ds_{n-2}.
\end{align*}
Here we understand that $ s_{n-1} = -|a'| $ and
$ z_{n-1} = -|a| $.
We substitute $ z_j \to z_j - ja_1/(n-1) $ for $ 1 \le j \le n-2 $, to find
\begin{align*}
I_{n,n-1;a,a'}(s)
& = \frac{ 1 }{(2\pi i)^{2n-4}}
\int_{\scriptstyle s_1,\dotsc,s_{n-2} \atop \scriptstyle z_1,\dotsc,z_{n-2}}
U_{n-1,a'}(s_1,\dotsc,s_{n-2})
\\
& \times
U_{n-1,\widetilde{a}} \Bigl( z_1-\frac{a_1}{n-1},\dotsc,z_{n-2}-\frac{(n-2)a_1}{n-1} \Bigr)
\\
& \times
\biggl[ \prod_{j=1}^{n-2}
\Gamma_{\R} ( js-s_j-z_{j-1}+ a_1 )
\Gamma_{\R} ( js-s_j-z_j ) \biggr]
\\
& \times \Gamma_{\R} ( (n-1)s-z_{n-2}+a_1+|a'|) \Gamma_{\R}( (n-1)s-a_1+|a|+|a'|)
\\
& \times dz_1 \cdots dz_{n-2} \,ds_1 \cdots ds_{n-2}.
\end{align*}
We now apply Proposition \ref{lemGL} (1), with
$ p_j = js-z_j $ for $ 1 \le j \le n-2 $  and  $\sigma = s+a_1 $, to integrate with
respect to the $ s_j $'s and get:
\begin{align*}
I_{n,n-1;a,a'}(s)
& = \Gamma_{\R}( (n-1)s-a_1+|a|+|a'|)
\prod_{j=1}^{n-1} \Gamma_{\R}(s+a_1+a_j')
\\
& \times
\frac{ 2^{n-2} }{(2\pi i)^{n-2}} \int_{ \scriptstyle z_1,\dotsc,z_{n-2}}
U_{n-1,a'}(s-z_1,\dotsc,(n-2)s-z_{n-2})
\\
& \times
U_{n-1,\widetilde{a}} \Bigl( z_1-\frac{a_1}{n-1},\dotsc,z_{n-2}-\frac{(n-2)a_1}{n-1} \Bigr)
\,dz_1 \cdots dz_{n-2}.
\end{align*}
If we substitute $ z_j \to z_j+ja_1/(n-1) $ for $1\le j\le n-2$, we find that
\begin{align*}
& I_{n,n-1;a,\widetilde{a}}(s) = \biggl[ \prod_{j=1}^{n-1} \Gamma_{\R}(s+a_1+a_j') \biggr]
\Gamma_{\R} \Bigl( (n-1)\Bigl(s-\frac{a_1}{n-1}\Bigr) + |a|+|a'| \Bigr)
\\
& \times
\frac{2^{n-2}}{(2\pi i)^{n-2}} \int_{z_1,\dotsc,z_{n-2}}
U_{n-1,a} \Bigr( \Bigl(s-\frac{a_1}{n-1} \Bigr)-z_1,
\dotsc, (n-2)\Bigl(s-\frac{a_1}{n-1}\Bigr) -z_{n-2} \Bigr)
\\
& \times
U_{n-1,\widetilde{a}} ( z_1, \dotsc, z_{n-2})
\,dz_1 \cdots dz_{n-2},
\end{align*}
and our proposition is proved.

\medskip

To deduce Theorem \ref{GLGL}, we use induction.  First, we consider part (1) of the theorem:  in the case $n=2$, this amounts to Barnes' first lemma.

Now by Proposition \ref{reGLGL} (2), we have
\begin{align*}
I_{n,n;a,a'}(s)
& = \prod_{j=1}^n \Gamma_{\R}(s+a_1'+a_j)
\prod_{j=1}^{n-1} \Gamma_{\R}
\Bigl( s-\frac{a_1'}{n-1} + a_1 + \widetilde{a}_j' \Bigr)
\\
& \times
I_{n-1,n-1;\widetilde{a}, \widetilde{a}'} \Bigl(s- \frac{a_1+a_1'}{n-1} \Bigr).
\end{align*}
By the  induction hypothesis applied to  $ I_{n,n;a,a'}(s) $, we find that
\begin{align*}
I_{n,n;a,a'}(s)
& = \prod_{j=1}^n \Gamma_{\R}(s+a_1'+a_j)
\prod_{j=1}^{n-1} \Gamma_{\R}(s+a_1+a_{j+1}')
\\
& \times
\prod_{ 1 \le j \le n-1, 1 \le k \le n-1}
\Gamma_{\R} \Bigl( s-\frac{a_1+a_1'}{n-1}+ \widetilde{a}_j + \widetilde{a}_k' \Bigr)
\\
& = \prod_{j=1}^n \Gamma_{\R}(s+a_1'+a_j)
\prod_{j=1}^{n-1} \Gamma_{\R}(s+a_1+a_{j+1}')
\\
& \times
\prod_{1 \le j \le n-1, 1 \le k \le n-1}
\Gamma_{\R}(s+a_{j+1}+a_{k+1}')
\end{align*}
which is the desired result.

We now turn to part (2) of our theorem.  The case $n=2$ follows from Proposition \ref{Barnesforms} (2) (in the case $n=2,m=1$).
The general case then follows by  Proposition \ref{reGLGL} (2) and by induction (much as in the above proof for the case $m=n$),
and thus we complete our proof of Theorem \ref{GLGL}.
 \end{proof}

%%%%%%%%%%%%%%%%%%%%%%%%%%%%%%%%%%%%%
\subsection{The case  $ m=n-2 $}
%%%%%%%%%%%%%%%%%%%%%%%%%%%%%%%%%%%%%

We investigate the zeta integrals $ Z(s, W, W') $ and $ Z^{\vee}(s, W, W') $, as defined in section 2.1 above, in the case $ m=n-2 $.
Note that the case $ (n,m) = (3,1) $ has previously been treated in \cite{HM}.

%%%%%%%%%% GL(n) \times GL(n-2) %%%%%%%%%
\begin{thm} \label{thmnminustwo} For $ a =(a_1,\dotsc,a_n) $ and $ a' = (a_1',\dotsc,a_{n-2}') $,
we have
\begin{align*}
I_{n,n-2;a,a'}(s) =
\biggl[
\prod_{1 \le j \le n, 1 \le k \le n-2} \Gamma_{\R}(s+a_j+a_k')
\biggr]
\cdot
\frac{1}{4\pi i} \int_w
\frac{ \prod_{j=1}^n \Gamma_{\R}(w-a_j) }{\prod_{j=1}^{n-2} \Gamma_{\R}(s+w+a_j')}
\,dw.
\end{align*}
\end{thm}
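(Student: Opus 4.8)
The plan is to reduce $I_{n,n-2;a,a'}(s)$ to $I_{n-1,n-2;\widetilde{a},a'}$ at a shifted argument --- which is an instance of the already-established $GL_{n-1}\times GL_{n-2}$ case of Theorem \ref{GLGL}(1) --- at the cost of a single surviving contour integration. This parallels the proof of Proposition \ref{reGLGL}, except that the reduction in the rank of the larger group is now accompanied by one extra Mellin variable, which will become the variable $w$ of the final Barnes integral.

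I would begin from the representation of $I_{n,n-2;a,a'}(s)$ given by Proposition \ref{Barnesforms}(2) with $m=n-2$: an $(n-2)$-fold contour integral, over $s_1,\dots,s_{n-3}$ and $s_{n-1}$, of $U_{n,a}(s-s_1,\dots,(n-3)s-s_{n-3},(n-2)s+|a'|,s_{n-1})\,U_{n-2,a'}(s_1,\dots,s_{n-3})$. Expanding $U_{n,a}$ by the recursion of Proposition \ref{MBGL} introduces variables $z_1,\dots,z_{n-2}$, replaces $U_{n,a}$ by $U_{n-1,\widetilde{a}}(z_1,\dots,z_{n-2})$, and produces $2(n-1)$ gamma factors. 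After the substitution $z_j\mapsto z_j-ja_1/(n-1)$, the gamma factors attached to the first $n-3$ slots of $U_{n,a}$ have exactly the shape needed to apply Proposition \ref{lemGL}(1) at rank $n-2$ with $\sigma=s+a_1$; carrying out the $s_1,\dots,s_{n-3}$ integrations then collapses $U_{n-2,a'}$ back onto itself at shifted arguments, produces the factor $\prod_{k=1}^{n-2}\Gamma_{\R}(s+a_1+a'_k)$, and leaves one gamma factor that cancels against the slot-$(n-2)$ factor of the expansion of $U_{n,a}$.

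What is left is $\prod_{k=1}^{n-2}\Gamma_{\R}(s+a_1+a'_k)$ times a contour integral in $z_1,\dots,z_{n-2}$ and $s_{n-1}$ whose integrand is $U_{n-1,\widetilde{a}}(z_1,\dots,z_{n-2})$ times $U_{n-2,a'}$ at arguments linear in $z_1,\dots,z_{n-3}$ times a few residual gamma factors. By a further sequence of applications of Propositions \ref{MBGL} and \ref{lemGL} and Barnes' first lemma (Lemma \ref{lemBarnes}), arranged so as to perform all but one of the integrations, I would rewrite this as a single-fold contour integral of a rational expression in gamma functions against $I_{n-1,n-2;\widetilde{a},a'}$ at an argument linear in the surviving variable; for $n=3$ this stage is vacuous and one recovers the computation of Hoffstein and Murty \cite{HM}. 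Substituting $I_{n-1,n-2;\widetilde{a},a'}=\prod_{1\le j\le n-1,\ 1\le k\le n-2}\Gamma_{\R}(\,\cdot\,+\widetilde{a}_j+a'_k)$ from Theorem \ref{GLGL}(1), using $\widetilde{a}_j=a_{j+1}+a_1/(n-1)$ and $|\widetilde{a}|=|a|$ to merge these with $\prod_{k=1}^{n-2}\Gamma_{\R}(s+a_1+a'_k)$ into the full $L$-factor $\prod_{1\le j\le n,\ 1\le k\le n-2}\Gamma_{\R}(s+a_j+a'_k)$, and finally making a linear change of variable in the surviving integration variable (of the shape $w=s_{n-1}+|a|$, up to an affine shift) so that the remaining numerator gamma factors --- whose arguments are the surviving variable plus a sum of $n-1$ of the $a_j$ --- become $\prod_{j=1}^n\Gamma_{\R}(w-a_j)$ and the denominator becomes $\prod_{k=1}^{n-2}\Gamma_{\R}(s+w+a'_k)$, one arrives at the asserted identity.

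The principal obstacle is the third step: engineering the iterated contour manipulations so that exactly one integration variable survives, keeping every Barnes integral convergent by suitable placement of the contours, and checking the several gamma-factor cancellations. The difficulty here is organizational rather than conceptual; the substantive ingredients --- the recursion of Proposition \ref{MBGL} relating the Mellin transforms $U_{n,a}$ and $U_{n-1,\widetilde{a}}$, the identities of Proposition \ref{lemGL}, and Barnes' first lemma --- are already available.
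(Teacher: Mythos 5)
Your proposal tracks the paper's proof in its broad structure: the same starting point (Proposition \ref{Barnesforms}(2) at $m=n-2$), the same expansion of $U_{n,a}$ by Proposition \ref{MBGL} to introduce $z_1,\dots,z_{n-2}$ and $U_{n-1,\widetilde{a}}$, the same first application of Proposition \ref{lemGL}(1) with $\sigma=s+a_1$ to collapse $s_1,\dots,s_{n-3}$ and peel off $\prod_{k}\Gamma_{\R}(s+a_1+a_k')$, the same endgame through Theorem \ref{GLGL}(1), and the correct guess that the surviving variable is $s_{n-1}$ with $w=s_{n-1}+|a|$. However, the step you defer as a ``further sequence of applications of \ref{MBGL}, \ref{lemGL} and Barnes' first lemma, arranged so as to perform all but one of the integrations'' is in fact the nontrivial core of the argument, and characterizing it as ``organizational rather than conceptual'' undersells it. What the paper does there is specific and non-routine: first, it applies Proposition \ref{lemGL}(1) a second time, now to $U_{n-2,a'}$, with the $s_{n-1}$-\emph{dependent} shift $\sigma=s+s_{n-1}+|a|$; this is the move that seeds the denominator $\prod_{j}\Gamma_{\R}(s+s_{n-1}+|a|+a_j')$ of the final Barnes integrand and disentangles $s_{n-1}$ from the $z_j$'s. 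Second, after defining $w_{n-2}=-|a'|$ and $w_{n-1}=(n-1)s'+|a|$ so that the boundary convention $p_n=-|\widetilde{a}|$ of part (2) of the lemma is met, it applies Proposition \ref{lemGL}\emph{(2)} --- not (1) --- with $p_j=js'-w_j$ and $\sigma=s_{n-1}+a_1/(n-1)+|a|$ to integrate out $z_1,\dots,z_{n-2}$, which produces the numerator $\prod_{j}\Gamma_{\R}(s_{n-1}+|a|-a_j)$ and collapses the remaining $w_j$-integral back to $I_{n-1,n-2;\widetilde{a},a'}(s')$. Without isolating the choice of an $s_{n-1}$-dependent $\sigma$ and the switch from part (1) to part (2) of Proposition \ref{lemGL}, there is no mechanical way to see that exactly one contour survives; so the plan is aimed at the right target with the right tools, but the two decisive moves are missing.
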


%%%
\bpf
We evaluate the integral (\ref{GLnGLmMellin2}) with $ m =n-2 $.
Using Proposition \ref{MBGL} to rewrite $ U_{n,a} $, we have
\begin{align*}
I_{n,n-2;a,a'}(s)
& = \frac{2^{-1}}{(2 \pi i)^{2n-4}}
\int_{\scriptstyle s_1,\cdots,s_{n-3},s_{n-1} \atop z_1,\dotsc,z_{n-2}}
U_{n-1,\widetilde{a}} (z_1,\dotsc, z_{n-2} )
U_{n-2,a'}(s_1,\dotsc, s_{n-3})
\\
& \times
\biggl[ \prod_{j=1}^{n-3}
\Gamma_{\R} \Bigl( j s-s_j-z_j-\frac{ja_1}{n-1} \Bigr)
\Gamma_{\R} \Bigl( j s-s_j-z_{j-1} + \frac{(n-j)a_1}{n-1} \Bigr) \biggr]
\\
& \times
\Gamma_{\R} \Bigl( (n-2)s+|a'|-z_{n-2} -\frac{(n-2)a_1}{n-1} \Bigr)
\\
& \times
\Gamma_{\R} \Bigl( (n-2)s+|a'|-z_{n-3} + \frac{2a_1}{n-1} \Bigr)
\\
& \times
\Gamma_{\R}(s_{n-1} + |a|-a_1) \Gamma_{\R} \Bigl( s_{n-1}-z_{n-2}+ \frac{a_1}{n-1} \Bigr)
\\
& \times
dz_1 \cdots dz_{n-2} \, ds_1 \cdots ds_{n-3} ds_{n-1}.
\end{align*}
%%%% s_1,...s_{n-3} %%%
We apply Proposition \ref{lemGL} (1) with $ \sigma = s+a_1 $
to integrate with respect to $ s_1, \dotsc, s_{n-3} $:
\begin{align*}
I_{n,n-2;a,a'}(s)
& = \prod_{j=1}^{n-2} \Gamma_{\R}(s+a_1+a_j')
\cdot \frac{ 2^{n-4} }{(2\pi i)^{n-1}} \int_{s_{n-1},z_1,\cdots,z_{n-2}}
U_{n-1,\widetilde{a}}(z_1,\dotsc, z_{n-2})
\\
& \times
U_{n-2,a'} ( s'-z_1, \dotsc, (n-3) s'-z_{n-3} ) \cdot
\Gamma_{\R} ( (n-2)s'+|a'|-z_{n-2} )
\\
& \times
\Gamma_{\R}(s_{n-1} + |a|-a_1) \Gamma_{\R} \Bigl( s_{n-1}-z_{n-2}+ \frac{a_1}{n-1} \Bigr)
\, dz_1 \cdots dz_{n-2} \,ds_{n-1}.
\end{align*}
%%%%%%
Here we have defined $ s' = s-a_1/(n-1). $

Next we use Proposition \ref{lemGL} (1), with $ \sigma = s+s_{n-1}+|a| $, to rewrite $ U_{n-2,a'} $;
we find that
\begin{align*}
I_{n,n-2;a,a'}(s)
& = \prod_{j=1}^{n-2} \Gamma_{\R}(s+a_1+a_j') \cdot
\frac{2^{-1} }{(2\pi i)^{2n-4}} \int_{ \scriptstyle s_{n-1}, z_1, \dotsc,z_{n-2} \atop
\scriptstyle w_1, \dotsc, w_{n-3} }
U_{n-1,\widetilde{a}}(z_1,\dotsc,z_{n-2})
\\
& \times \frac{ \Gamma_{\R}((n-3)s'-z_{n-3}+s+s_{n-1}+|a|+|a'|) }
{ \prod_{j=1}^{n-2} \Gamma_{\R}(s+s_{n-1}+|a|+a_j') }
\\
& \times
U_{n-2,a'}(w_1,\dotsc, w_{n-3})
\cdot \prod_{j=1}^{n-3} \Gamma_{\R}( j s'-z_j-w_j )
\\
& \times
\prod_{j=1}^{n-3}
\Gamma_{\R} ( (j-1) s' -z_{j-1}-w_j+s+s_{n-1}+|a| )
\\
& \times
\Gamma_{\R} ( (n-2)s'+|a'|-z_{n-2} )
\Gamma_{\R}(s_{n-1} + |a|-a_1)
\\
& \times
\Gamma_{\R} \Bigl( s_{n-1}-z_{n-2}+ \frac{a_1}{n-1} \Bigr)
\, dw_1\cdots dw_{n-3} \,dz_1 \cdots dz_{n-2} \,ds_{n-1}.
\end{align*}
%%%%
If we define $ w_{n-2} = - |a'| $ and $ w_{n-1} = (n-1)s' +|a| $,
the above can be rewritten as follows:
\begin{align*}
I_{n,n-2:a,a'}(s)
& = \prod_{j=1}^{n-2} \Gamma_{\R}(s+a_1+a_j')
\cdot \frac{2^{-1} }{(2\pi i)^{2n-4}} \int_{ \scriptstyle s_{n-1}, z_1, \dotsc,z_{n-2} \atop
\scriptstyle w_1, \dotsc, w_{n-3} }
U_{n-1,\widetilde{a}}(z_1,\dotsc,z_{n-2})
\\
& \times U_{n-2,a'}(w_1,\dotsc, w_{n-3})
%\\
%& \times
\cdot \frac{ \Gamma_{\R}(s_{n-1} + |a|-a_1) \Gamma_{\R}( s+s_{n-1}+|a|-w_1) }
{ \prod_{j=1}^{n-2} \Gamma_{\R}(s+s_{n-1}+|a|+a_j') }
\\
& \times
\prod_{j=1}^{n-2} \Gamma_{\R} ( j s' -z_j-w_j )
\Gamma_{\R} \Bigl( (j+1) s' -z_j
-w_{j+1} + s_{n-1} + \frac{a_1}{n-1} + |a| \Bigr)
\\
& \times
dw_1\cdots dw_{n-3} \,dz_1 \cdots dz_{n-2} \,ds_{n-1}.
\end{align*}
%%%%
We can apply Proposition \ref{lemGL} (2) with
$ p_j = js' -w_j $ $ (1 \le j \le n-2) $ and
$ \sigma = s_{n-1}+a_1/(n-1)+|a| $, to integrate with respect to $ z_1,\dotsc, z_{n-2} $
(note that the condition $p_{n-1} = -|\widetilde{a}| = -|a| $ is satisfied because of the
definition of $ w_{n-1} $), and arrive at
\begin{align*}
I_{n,n-2;a,a'}(s)
& = \prod_{j=1}^{n-2} \Gamma_{\R}(s+a_1+a_j')
\cdot \frac{2^{n-3} }{(2\pi i)^{n-2}} \int_{ s_{n-1}, w_1, \dotsc, w_{n-3} }
U_{n-2,a'}(w_1,\dotsc, w_{n-3})
\\
& \times \frac{ \Gamma_{\R}(s_{n-1} + |a|-a_1)
\prod_{j=1}^{n-1} \Gamma_{\R} (s_{n-1}+ \frac{a_1}{n-1}+|a|-\widetilde{a}_j) }
{ \prod_{j=1}^{n-2} \Gamma_{\R}(s+s_{n-1}+|a|+a_j') }
\\
& \times
U_{n-1,\widetilde{a}}( s'-w_1, \dotsc,
(n-3) s' -w_{n-3},
(n-2) s' + |a'| )
\\
& \times
dw_1\cdots dw_{n-3} \,ds_{n-1}\\
& = \biggl[ \prod_{j=1}^{n-2} \Gamma_{\R}(s+a_1+a_j') \biggr]
\cdot I_{n-1,n-2; \widetilde{a},a'} (s')
\\
& \times
\frac{1}{4 \pi i} \int_{s_{n-1}}
\frac{ \prod_{j=1}^n \Gamma_{\R}(s_{n-1}+|a|-a_j) }
{ \prod_{j=1}^{n-2} \Gamma_{\R}(s+s_{n-1}+|a|+a_j')} \,ds_{n-1},
\end{align*}the last step by (\ref{GLnGLmMellin2}).
Hence Theorem \ref{GLGL} (1) yields our assertion.
\epf

\medskip

%%% Unipotent integral %%%%%%%%%%%%%%%%%%%%

Now as may be seen, for example, in \cite{JS3},  the $GL_n\times GL_{n-2}$ functional equation at a real place $\nu$ relates $Z(s,W,W')$ to $Z^{\vee}(s, R(w_{n,n-2})  W ^{\vee},(W'  )^{\vee}).
$
Here $ w_{n,m} = \begin{pmatrix} I_m & \\ & w_{n-m} \end{pmatrix} $ and
$ R $ means right translation.  In the situation of interest to us here, where $W=W^A_{n,a}$ and $W'=\overline{W}^A_{n-2,a'}$ as above,   we may ignore the $ R(w_{n,n-2}) $ in $Z^{\vee}(s, R(w_{n,n-2}) W ^{\vee},(W'  )^{\vee})$,  since $ W_{n,a}^A $ is right $ O(n) $-invariant.
The relevant functional equation then relates $   Z(s, W_{n,a}^A, \overline{W}_{n-2,a'}^A)=I_{n,n-2;a,a'}(s)  $ to 
$ 
Z^{\vee}(s,  (W_{n,a}^A)^{\vee},(\overline{W}_{n-2,a'}^A)^{\vee})=I^\vee_{n,n-2;a,a'}(s)  .
$  

More specifically:  the functional equation demonstrated in \cite{JS3} amounts, in the present context, to the assertion that$$\frac{I_{n,n-2;a,a'}(s)}{\prod_{1 \le j \le n, 1 \le k \le n-2} \Gamma_{\R}(s+a_j+a_k')}  =\frac{I^\vee_{n,n-2;a,a'}(1-s)}{\prod_{1 \le j \le n, 1 \le k \le n-2} \Gamma_{\R}(1-s-a_j-a_k')}. $$
We now wish to show that the above functional equation may also be derived using results contained herein.  To do so it will suffice, in light of Theorem \ref{thmnminustwo}, to prove the following:
%%%%%%%%% m=n-2 unipotent %%%%%%%%%%%
\begin{thm}  \label{GLunip}
We have
\begin{align*}
 {I}^\vee_{n,n-2;a,a'}(s)
& = \biggl[ \prod_{1 \le j \le n, 1 \le k \le n-2} \Gamma_{\R}(s-a_j-a_k') \biggr]
\\
& \times \frac{1}{4 \pi i} \int_{w}
\frac{ \prod_{1 \le j \le n} \Gamma_{\R} (w-a_j) }
{ \prod_{1 \le j \le n-2} \Gamma_{\R}(1-s+w+a_j')} \,dw.
\end{align*}
\end{thm}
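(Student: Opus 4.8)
The plan is to obtain Theorem \ref{GLunip} by feeding the closed form for $I_{n,n-1}$ from Theorem \ref{GLGL}(1) into the single--integral expression for $I^\vee_{n,n-2;a,a'}(s)$ already established in Proposition \ref{Barnesforms}(3), and then simplifying. Once the substitution is made, the integrand of Proposition \ref{Barnesforms}(3) is an explicit ratio of $\Gamma_{\R}$--factors in $w$ and the parameters, and all that remains is a rearrangement of these factors together with a single linear change of the integration variable.

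Concretely, I would set $t = t(w) = \tfrac{3}{2} + \tfrac{w+|a'|-1}{n-1}$, so that by (\ref{acheck}) we have $a_1^\vee = (n-2)(t-s)$ and $a_j^\vee = s - t - a_{j-1}'$ for $2 \le j \le n-1$. Proposition \ref{Barnesforms}(3) then says $I^\vee_{n,n-2;a,a'}(s) = \tfrac{1}{4\pi i}\int_w I_{n,n-1,-a,a^\vee}(t)\big/\bigl[\prod_{j=2}^{n-1}\Gamma_{\R}(a_1^\vee - a_j^\vee + 1)\bigr]\,dw$, while Theorem \ref{GLGL}(1) evaluates the numerator as $I_{n,n-1,-a,a^\vee}(t) = \prod_{j=1}^{n}\prod_{k=1}^{n-1}\Gamma_{\R}(t - a_j + a_k^\vee)$. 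I would split the product over $k$ into the term $k=1$ and the terms $2 \le k \le n-1$. For $2 \le k \le n-1$ the $w$--dependence cancels, since $t - a_j + a_k^\vee = s - a_j - a_{k-1}'$; hence, after reindexing $k-1 \mapsto k$, these factors assemble into the constant $\prod_{1 \le j \le n,\ 1 \le k \le n-2}\Gamma_{\R}(s - a_j - a_k')$, which is exactly the prefactor in the statement and may be taken outside the integral. For $k=1$ we get $t - a_j + a_1^\vee = (n-1)t - (n-2)s - a_j$, while the denominator factors are $a_1^\vee - a_j^\vee + 1 = (n-1)(t-s) + a_{j-1}' + 1$ for $2 \le j \le n-1$.

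Next I would introduce the new variable $w' = (n-1)t - (n-2)s = w + |a'| - 1 + \tfrac{3(n-1)}{2} - (n-2)s$, which is a shift of the vertical $w$--contour by an additive constant and has $dw' = dw$. Using the identities $(n-1)t - (n-2)s - a_j = w' - a_j$ and $(n-1)(t-s) = w' - s$, the remaining integral turns into $\tfrac{1}{4\pi i}\int_{w'}\bigl[\prod_{j=1}^{n}\Gamma_{\R}(w' - a_j)\bigr]\big/\bigl[\prod_{k=1}^{n-2}\Gamma_{\R}(1 - s + w' + a_k')\bigr]\,dw'$; renaming $w'$ back to $w$ gives exactly the claimed formula, so Theorem \ref{GLunip} follows.

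The one point that is not purely mechanical is the justification of the contour shift, which I would treat exactly as the analogous Mellin--Barnes manipulations elsewhere in the paper. After substituting Theorem \ref{GLGL}(1), the integrand has poles only in the single family $w' \in \{a_j - 2m : 1 \le j \le n,\ m \in \Z_{\ge 0}\}$ coming from the numerator (the denominator $\Gamma_{\R}$--factors being zero--free, and the integrand decaying exponentially on vertical lines), so the integral is independent of the vertical line of integration provided it lies to the right of these poles, and the affine substitution carries one admissible contour to another; alternatively, one could establish the identity first for parameters in a region where all the relevant contours are genuine vertical lines and then invoke analytic continuation in $a$, $a'$, $s$. Beyond this, the entire argument reduces to the three elementary identities $t - a_j + a_k^\vee = s - a_j - a_{k-1}'$ (for $k \ge 2$), $t - a_j + a_1^\vee = w' - a_j$, and $a_1^\vee - a_j^\vee + 1 = 1 - s + w' + a_{j-1}'$, each an immediate consequence of (\ref{acheck}) and the definition of $w'$.
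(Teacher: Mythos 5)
Your proof is correct and takes essentially the same approach as the paper: apply Proposition \ref{Barnesforms}(3), substitute Theorem \ref{GLGL}(1) for the inner $GL_n\times GL_{n-1}$ zeta integral, separate the $k=1$ factor from the $w$-independent factors $2\le k\le n-1$, and perform the same affine change of the integration variable. Your additional remarks on contour shifting supply a justification that the paper leaves implicit.
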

%%%%%%%%%%%%%%%%%%%%%%%%%%%%%%%%%%%%%%%
\begin{proof}  By Proposition \ref{Barnesforms} (3),
\begin{align*} 
{I}^\vee_{n,n-2;a,a'}(s)  =  \frac{1}{4\pi i} \int_{w} \frac{ I_{ n ,n-1,-a,a^\vee}\biggl(\ds\frac{3}{2}+  \frac{w+|a'|-1}{n-1}  \biggr)}{ \prod_{j=2}^{n-1}
\Gamma_{\R}(a_1^\vee-a_j^\vee+1) } \,dw,\end{align*} where
$ a^{\vee} = (a_1^{\vee},\dotsc, a_{n-1}^{\vee}) $ is as in  (\ref{acheck}).  We may now apply Theorem \ref{GLGL} (1) to the archimedean zeta factor inside the integral; we find that
\begin{align*} 
{I}^\vee_{n,n-2;a,a'}(s)  &=  \frac{1}{4\pi i} \int_{w} \frac{ \prod_{1 \le j \le n, 1 \le k \le n-1} \Gamma_{\R}\ds\biggl(\frac{3}{2}+  \frac{w+|a'|-1}{n-1}  -a_j+a_k^\vee\biggr)}{ \prod_{j=2}^{n-1}
\Gamma_{\R}(a_1^\vee-a_j^\vee+1) } \,dw
\\&=  \biggl[ \prod_{1 \le j \le n, 2 \le k \le n-1} \Gamma_{\R}\ds\biggl(\frac{3}{2}+  \frac{w+|a'|-1}{n-1}  -a_j+a_k^\vee\biggr) \biggr]\\&\times\frac{1}{4\pi i} \int_{w} \frac{ \prod_{1 \le j \le n } \Gamma_{\R}\ds\biggl(\frac{3}{2}+  \frac{w+|a'|-1}{n-1}  -a_j+a_1^\vee\biggr)}{ \prod_{j=2}^{n-1}
\Gamma_{\R}(a_1^\vee-a_j^\vee+1) } \,dw
\\&=  \biggl[ \prod_{1 \le j \le n, 2 \le k \le n-1} \Gamma_{\R}\ds (s  -a_j-a_{k-1}'  ) \biggr]\\&\times\frac{1}{4\pi i} \int_{w} \frac{ \prod_{1 \le j \le n } \Gamma_{\R}\ds\biggl(w-a_j-(n-2)s+|a'|+\frac{3n-5}{2} \biggr)}{ \prod_{j=2}^{n-1}
\Gamma_{\R}\biggl(1 +w+a_{j-1}'-(n-1)s+|a'|+\ds\frac{3n-5}{2} \biggr) } \,dw.\end{align*}
We substitute $ {w} \to w +(n-2)s-|a'|-\ds\frac{3n-5}{2} $; we also rearrange the products a bit (letting $k\to k+1$ in the product outside the integral; and letting $j\to j+1$ in the product  in the denominator of the integrand). The result is exactly the statement of Theorem \ref{GLunip}, and we are done.
\end{proof}

%%%%%%%%%%%%%%%%%%%%%%%%%%%%%%%%%%%%%%%%%%%%%%%%%%%%%%%%%%%%%%%%%
%%%%%%%%%%%%%%%%%%%%%%%%%%%%%%%%%%%%%%%%%%%%%%%%%%%%%%%%%%%%%%%%%%
\section{archimedean zeta integrals on $SO_{2n+1} \times GL_m $}
%%%%%%%%%%%%%%%%%%%%%%%%%%%%%%%%%%%%%%%%%%%%%%%%%%%%%%%%%%%%%%%%%%
%%%%%%%%%%%%%%%%%%%%%%%%%%%%%%%%%%%%%%%%%%%%%%%%%%%%%%%%%%%%%%%%%%

Global Rankin-Selberg integrals for $L$-functions on $ SO_{2n+1} \times GL_m $
have been   constructed in \cite{GPR} for the cases  $ m = n$ and $m=n+1$,  in \cite{Gi} for the cases  $m\le n,$ and in \cite{So1} for the cases $m\ge n+1$. In this section, we will consider the cases $m=n-1$, $m=n$, and $m=n+1$.

As may be seen from the works cited above, the archimedean zeta integrals $ J_{\ell,n;a,b}(s) $ of interest to us, for the cases of $SO_{2n+1}\times GL_{n+1}$, $SO_{2n+1}\times GL_{n}$, and $SO_{2n+1}\times GL_{n-1}$ respectively, may be given by:
\begin{align*}
 J _{1,n;a,b}(s) = 2^n \int_{(\R_+)^n} 
   W_{n+1,a}^A(\alpha[y_1,\dotsc, y_n,1]) W_{n,b}^B(\beta[y_1,\dotsc,y_n])
   \biggl[ \prod_{j=1}^{n} y_j^{j(s- ({3n+1})/{2}+j)} \frac{dy_j}{y_j} \biggr],
\end{align*}
\begin{align*}
 J_{0,n;a,b}(s) = 2^n \int_{(\R_+)^n}  
   W_{n,a}^A(\alpha[y_1,\dotsc, y_n]) W_{n,b}^B(\beta[y_1,\dotsc,y_n]) 
   \biggl[ \prod_{j=1}^{n} y_j^{j(s- {3n}/{2}+j)} \frac{dy_j}{y_j} \biggr],
\end{align*}
and 
\begin{align*}
 J_{-1,n;a,b}(s) &= 2^n \int_{\R^{n-1}} \int_{(\R_+)^{n-1}} 
  W_{n-1,a}^A(\alpha[y_1,\dotsc, y_{n-1}])  \\
 & \times 
  W_{n,b}^B( u[x_1,\dotsc,x_{n-1}]  \beta[y_1,\dotsc,y_{n-1},1] ) 
 \biggl[ \prod_{j=1}^{n-1} y_j^{j(s-(3n-3)/{2}+j)} \frac{dy_j}{y_j} \biggr]
   \prod_{j=1}^{n-1} dx_j,
\end{align*}
where
\begin{align*}
 u[x_1,\dotsc,x_{n-1}] = \begin{pmatrix} u & & \\ & 1 & \\ & & {}^t u^{-1} \end{pmatrix},
 \mbox{ with } 
 u = \begin{pmatrix} 
  1 & & & \\ & {\ddots} & & \\ & & 1 & \\ x_1 & \cdots & x_{n-1} & 1 
  \end{pmatrix}.
\end{align*}

In this section, we show the coincidence of the archimedean zeta integrals $ J_{1,n;a,b}(s)$ and $ J_{0,n;a,b}(s)$ with
their associated $L$-factors. We further demonstrate that the archimedean zeta integral $ J_{ -1,n;a,b}(s)$ equals
the associated $L$-factor time a certain integral of Barnes type.

To obtain the desired resultsfor $ J_{1,n;a,b}(s)$ and $ J_{0,n;a,b}(s)$, we will need the following proposition.

%%%%%%%%%%%%%  Relations of SOGL integrals %%%%%%%%%%%%%%%
\begin{prop} \label{SOGLlem}
\begin{itemize} 
\item[(1)] For $ a = (a_1,\dotsc,a_n) $ and $ b = (b_1,\dotsc, b_n) $, we have
\begin{align*}
 J_{0,n;a,b}(s) = \biggl[ \prod_{j=1}^n \Gamma_{\R}(s+a_j+b_n) \Gamma_{\R}(s+a_j-b_n) \biggr]
   J_{1,n-1;a,\widetilde{b}}(s),
\end{align*}
where $ \widetilde{b} = (b_1,\dotsc,b_{n-1}) $.
\item[(2)] For $ a=(a_1,\dotsc,a_{n+1}) $ and $ b = (b_1,\dotsc, b_n) $, we have 
\begin{align*}
 J_{1,n;a,b}(s) = \frac{ \prod_{j=1}^n \Gamma_{\R}(s+a_1+b_j) \Gamma_{\R}(s+a_1-b_j) }
    { \prod_{j=1}^n \Gamma_{\R}(2s+a_1+a_{j+1}) } \cdot 
   J_{0,n;\widetilde{a},b} \Bigl( s- \frac{a_1}{n} \Bigr),
\end{align*}
where $ \widetilde{a} = (\widetilde{a}_1,\dotsc, \widetilde{a}_{n}) $ with 
$ \widetilde{a}_j= a_{j+1} + ja_1/n $.
\end{itemize} 
\end{prop}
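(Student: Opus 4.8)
The plan is to prove both parts by the same strategy used for Proposition~\ref{reGLGL}: convert each archimedean zeta integral into a Barnes-type integral of Mellin transforms, apply the recursive formulas of Proposition~\ref{MBGL} or Proposition~\ref{MBSO} to expand the larger-rank Mellin transform in terms of the next one down, and then use the Barnes-lemma identities of Proposition~\ref{lemGL} or Proposition~\ref{lemSO} to carry out the integration that collapses things back into a zeta integral of lower rank. Concretely, I would first record Barnes-type representations of $J_{0,n;a,b}(s)$ and $J_{1,n;a,b}(s)$ analogous to Proposition~\ref{Barnesforms}: using the Iwasawa decomposition, the integrals over $(\mathbf{R}_+)^n$ become, after Mellin inversion and the convolution theorem, integrals over vertical lines of products $U_{n,a}(\cdot)\,V_{n,b}(\cdot)$ (for $J_{0,n}$) or $U_{n+1,a}(\cdot)\,V_{n,b}(\cdot)$ (for $J_{1,n}$), with the ``$s$-shifted'' arguments $js - s_j$ coming from the power $y_j^{j(s-\cdots+j)}$ in the integrand.

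For part~(1), I would start from the $U_{n,a}\cdot V_{n,b}$ representation of $J_{0,n;a,b}(s)$ and apply Proposition~\ref{MBSO} to rewrite $V_{n,b}$ in terms of $V_{n-1,\widetilde b}$, introducing the inner Barnes variables $w_j,z_j$ and a collection of $\Gamma_{\mathbf{R}}$ factors involving $b_n$. The resulting multiple integral should, after the substitutions needed to normalize exponents, match the shape required by Proposition~\ref{lemGL}~(with an appropriate choice of $\sigma$) applied to the $U_{n,a}$ part, or alternatively by Barnes' first lemma applied directly to the $w$-integrations; integrating these out produces precisely the prefactor $\prod_{j=1}^n\Gamma_{\mathbf{R}}(s+a_j+b_n)\Gamma_{\mathbf{R}}(s+a_j-b_n)$ together with a remaining integral that is recognizably the Barnes-type form of $J_{1,n-1;a,\widetilde b}(s)$. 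The key is to track the $s$-dependence: the shift structure $js-s_j$ must survive the manipulations so that the leftover integral has the correct argument $s$ (not $s-$ something), which is consistent with the statement, where there is no shift in part~(1).

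For part~(2), I would begin with the $U_{n+1,a}\cdot V_{n,b}$ representation of $J_{1,n;a,b}(s)$ and apply Proposition~\ref{MBGL} to $U_{n+1,a}$, expanding it in terms of $U_{n,\widetilde a}$ with $\widetilde a_j = a_{j+1}+ja_1/n$ (note: here $n+1$ plays the role of ``$n$'' in Proposition~\ref{MBGL}, so the weights are $a_1/n$ not $a_1/(n-1)$, matching the statement). This introduces $\Gamma_{\mathbf{R}}$ factors carrying the $a_1$-dependence; I would then apply Proposition~\ref{lemSO} (with $\sigma$ chosen appropriately, something like $\sigma = s + a_1/n$ or a related quantity involving $a_1$) to integrate against $V_{n,b}$. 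The division by $\prod_{j=1}^n\Gamma_{\mathbf{R}}(2s+a_1+a_{j+1})$ and the numerator $\prod_j\Gamma_{\mathbf{R}}(s+a_1+b_j)\Gamma_{\mathbf{R}}(s+a_1-b_j)$ should both emerge from the prefactors in Proposition~\ref{lemSO}; the substitution $z_j\to z_j - ja_1/n$ (mirroring the $z_j\to z_j+ja_1/(n-1)$ step in the proof of Proposition~\ref{reGLGL}) will convert $U_{n,\widetilde a}(z_1,\dots)$ into the form appearing in the Barnes representation of $J_{0,n;\widetilde a,b}(s-a_1/n)$, where the shifted argument $s - a_1/n$ arises exactly because of this substitution.

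I expect the main obstacle to be the bookkeeping of exponents and Gamma-factor arguments through the chain of substitutions --- in particular, verifying that after rewriting $V_{n,b}$ (or $U_{n+1,a}$), normalizing the $y_j$ or $q_j$ powers, and invoking the relevant Barnes identity, the surviving integral is \emph{literally} the Barnes-type representation of the lower zeta integral with the claimed argument, rather than merely something proportional to it. This is the same kind of ``collect the powers of $y_j$ and $q_j$'' computation that appears (and is largely suppressed) in the proofs of Propositions~\ref{glwhit}, \ref{GLrec2}, and \ref{reGLGL}; the delicate point for part~(2) specifically is matching the denominator $\prod_j\Gamma_{\mathbf{R}}(2s+a_1+a_{j+1})$, which must come out of the $\Gamma_{\mathbf{R}}(\gamma+\delta+\cdots)$-type denominators produced by Proposition~\ref{lemSO} or by repeated use of Barnes' first lemma, with the factor of $2$ in $2s$ reflecting the two $\Gamma_{\mathbf{R}}$'s (a ``$b_j$'' one and a ``$-b_j$'' one) that get consumed at each stage in the $SO$ recursion.
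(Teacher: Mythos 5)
Your high-level plan tracks the paper's: both parts proceed from Barnes-type representations of the zeta integrals, expand the larger-rank Mellin transform via Proposition~\ref{MBGL} or~\ref{MBSO}, and then collapse the resulting multiple integral via the Barnes-lemma identities. But your plan for part~(2) has a concrete gap. You write that ``the division by $\prod_{j=1}^n\Gamma_{\mathbf{R}}(2s+a_1+a_{j+1})$ and the numerator $\prod_j\Gamma_{\mathbf{R}}(s+a_1+b_j)\Gamma_{\mathbf{R}}(s+a_1-b_j)$ should both emerge from the prefactors in Proposition~\ref{lemSO}.'' They cannot: the prefactor in Proposition~\ref{lemSO} is $1/\prod_j\Gamma_{\mathbf{R}}(\sigma+b_j)\Gamma_{\mathbf{R}}(\sigma-b_j)$, which involves only the orthogonal spectral parameter $b$, so on its own it can only produce the $\Gamma_{\mathbf{R}}(s+a_1\pm b_j)$ numerator, never the $\wedge^2$-type denominator in the $a$'s. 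The paper in fact applies Proposition~\ref{lemGL}~(1) \emph{first}, with $p_j=z_j$ and $\sigma = 2s+(n-1)a_1/n$, to the $U_{n,\widetilde a}$ factor; its prefactor $\Gamma_{\mathbf{R}}(p_{n-1}+\sigma+|\widetilde a|)/\prod_j\Gamma_{\mathbf{R}}(\sigma+\widetilde a_j)$ is precisely what produces $1/\prod_j\Gamma_{\mathbf{R}}(2s+a_1+a_{j+1})$ (and introduces new $q$-variables). Only after the substitution $z_j\to -z_j+j(s-a_1/n)$ does the paper apply Proposition~\ref{lemSO} with $\sigma=s+a_1$ to integrate out the $s_j$'s and $z_j$'s and obtain the numerator. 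So an entire application of~\ref{lemGL}~(1) is missing from your part~(2), and without it there is no mechanism to generate the required denominator; the $\sigma$ values you float ($s+a_1/n$) are also not the ones that make either Barnes identity produce the claimed factors.

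For part~(1) your sketch is closer but still imprecise: the paper applies Proposition~\ref{lemGL}~(1) \emph{twice}, once with $p_j=js-w_j$, $\sigma=s-b_n$ to integrate the $s_j$'s (giving $\prod_j\Gamma_{\mathbf{R}}(s+a_j-b_n)$), and again, after the substitution $w_j\to js-w_j$, with $p_j=js-z_j$, $\sigma=s+b_n$ to integrate the $w_j$'s (giving $\prod_j\Gamma_{\mathbf{R}}(s+a_j+b_n)$). Your alternative of hitting the $w$-integrations directly with Barnes' first lemma would not reconstitute the $U_{n,a}$ factor with the shifted arguments $js-z_j$ that are needed to recognize the remaining integral as the Mellin form of $J_{1,n-1;a,\widetilde b}(s)$; the point of~\ref{lemGL}~(1) is exactly that it rebuilds $U_{n,a}$ at the new arguments rather than annihilating it.
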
 
%%%%%%%%%%%%%%%%%%%%%%%%%%%%%%%%%%%%%%%%%%%%%%%%%%%

\begin{proof}  We first prove part (1). By Mellin inversion, we  find that 
\begin{align*}
 J_{0,n;a,b}(s) 
& = \frac{2^n }{(2\pi i)^{n-1}} \int_{s_1,\dotsc,s_{n-1}}
    U_{n,a}(s_1,\dotsc, s_{n-1}) 
 V_{n,b}(s-s_1,\dotsc,(n-1)s-s_{n-1},ns+|a|) \\
& \qquad \qquad \qquad \times 
   ds_1 \cdots ds_{n-1}.
\end{align*}

Then by Proposition \ref{MBSO}, we have
\begin{align*}
  J_{0,n;a,b}(s) 
& = \frac{2^{2n-2} }{(4\pi i)^{3n-3}} \int_{s_1,\dotsc,s_{n-1}} 
   \int_{\scriptstyle w_1,\dotsc, w_{n-1} \atop \scriptstyle z_1,\dotsc,z_{n-1} }
   U_{n,a}(s_1,\dotsc, s_{n-1})  
%\\
%& \times 
   V_{n-1,\widetilde{b}} (z_1,\dotsc,z_{n-1})   
\\
& \times 
  \biggl[ \prod_{j=1}^{n-1} \Gamma_{\R}(js-s_j-w_j) \Gamma_{\R}(js-s_j-w_{j-1}-b_n) \biggr]
\\
& \times 
  \biggl[ \prod_{j=1}^{n-1} 
   \Gamma_{\R}(w_j-z_j) \Gamma_{\R}(w_j-z_{j-1}+b_n) \biggr]
\\
& \times  
  \Gamma_{\R}(ns-w_{n-1}+|a|-b_n) 
  \Gamma_{\R}(ns-z_{n-1}+|a|+b_n)
\\
& \times 
  dz_1 \cdots dz_{n-1} \, dw_1 \cdots dw_{n-1} \,ds_1 \cdots ds_{n-1}.
\end{align*}
We apply Proposition \ref{lemGL} (1) with 
$ p_j = js-w_j $, for $1\le j\le n-1$, and $ \sigma = s-b_n $, to perform the integration with respect to 
$ s_1,\dotsc, s_{n-1} $, whence:
\begin{align*}
 J_{0,n;a,b}(s)
& = \biggl[ \prod_{j=1}^n \Gamma_{\R}(s+a_j-b_n) \biggr] 
\\
& \times
   \frac{2^{2n-2} }{(4\pi i)^{2n-2}} 
   \int_{\scriptstyle w_1,\dotsc, w_{n-1} \atop \scriptstyle z_1,\dotsc,z_{n-1} }
   U_{n,a}(s-w_1,\dotsc, (n-1)s-w_{n-1})  
   V_{n-1,\widetilde{b}}(z_1,\dotsc,z_{n-1}) 
\\
& \times 
  \biggl[ \prod_{j=1}^{n-1} \Gamma_{\R}(w_j-z_j) \Gamma_{\R}(w_j-z_{j-1}+b_n) \biggr]
\\
& \times 
  \Gamma_{\R}(ns-z_{n-1}+|a|+b_n) 
  \, dz_1 \cdots dz_{n-1} \, dw_1 \cdots dw_{n-1}.
\end{align*}
To integrate with respect to $ w_1,\dotsc, w_{n-1} $, we substitute 
$ w_j \to js-w_j $ ($1\le j\le n-1$), and apply Proposition \ref{lemGL} (1) again -- this time, with 
$ p_j = js-z_j $ ($1\le j\le n-1$) and $ \sigma = s+b_n $. We get:
\begin{align*}
 J_{0,n;a,b}(s) 
& = \biggl[ \prod_{j=1}^n \Gamma_{\R}(s+a_j+b_n) \Gamma_{\R}(s+a_j-b_n) \biggr] 
\\
& \times 
  \frac{2^{2n-2} }{(4\pi i)^{n-1}} \int_{z_1,\dotsc,z_{n-1}}
   U_{n,a}(s-z_1,\dotsc, (n-1)s-z_{n-1}) 
\\
& \times 
   V_{n-1,\widetilde{b}}(z_1,\dotsc,z_{n-1}) \,dz_1 \cdots dz_{n-1}
\\
& = \biggl[ \prod_{j=1}^n \Gamma_{\R}(s+a_j+b_n) \Gamma_{\R}(s+a_j-b_n) \biggr] 
    J_{1,n-1 ;a,\widetilde{b}}(s),
\end{align*}the last step by the definition of $J_{ 1,n-1;a,\widetilde{b}}(s)$ and by Mellin inversion.
Thus we have finished the proof of (1).

\medskip 

\noindent 
We now prove part (2). By Proposition \ref{MBGL}, we have 
\begin{align*}
 J_{1,n;a,b}(s) 
& = \frac{2^{2n-1}}{(4\pi i)^{2n-1}} \int_{\scriptstyle s_1,\dotsc, s_{n} \atop 
  \scriptstyle z_1,\dotsc,z_{n-1}} 
   U_{n,\widetilde{a}}(z_1,\dotsc,z_{n-1}) 
   V_{n,b}(s_1,\dotsc,s_n) 
\\
& \times 
  \biggl[ \prod_{j=1}^n \Gamma_{\R} \Bigl(js-s_j-z_j- \frac{ja_1}{n} \Bigr)
     \Gamma_{\R} \Bigl( js-s_j-z_{j-1}+ \frac{(n+1-j)a_1}{n} \Bigr) \biggr]
\\
& \times 
  dz_1 \cdots dz_{n-1} \,ds_1 \cdots ds_n.
\end{align*}
Here $ z_n = -|a| = -(a_1+\cdots+a_{n+1}) $.
We apply Proposition \ref{lemGL} (1), with 
$ p_j = z_j $ $ (1 \le j \le n-1) $ and $ \sigma = 2s+(n-1)a_1/n, $ to get 
\begin{align*}
 J_{1,n;a,b}(s) 
& =  \frac{1}{ \prod_{j=1}^n \Gamma_{\R}(2s+a_1+a_{j+1}) } 
\\
& \times 
    \frac{ 2^{2n-1} }{(4\pi i)^{3n-2}} 
   \int_{\scriptstyle s_1,\dotsc, s_{n} \atop \scriptstyle z_1,\dotsc,z_{n-1}, 
     \scriptstyle q_1,\dotsc, q_{n-1}}  
    U_{n,\widetilde{a}}(q_1,\dotsc,q_{n-1}) V_{n,b}(s_1,\dotsc,s_n) 
\\
& \times \Gamma_{\R} \Bigl( 2s+ \frac{(n-1)a_1}{n} + z_{n-1}+ |a|  \Bigr)
\\
& \times 
  \biggl[ \prod_{j=1}^{n-1} \Gamma_{\R}(z_j-q_j) 
        \Gamma_{\R}\Bigl(z_{j-1}-q_j+2s+\frac{(n-1)a_1}{n} \Bigr) \biggr]
\\
& \times 
  \biggl[ \prod_{j=1}^n \Gamma_{\R} \Bigl(js-s_j-z_j- \frac{ja_1}{n} \Bigr)
     \Gamma_{\R} \Bigl( js-s_j-z_{j-1}+ \frac{(n+1-j)a_1}{n} \Bigr) \biggr]
\\
& \times 
   dq_1 \cdots dq_{n-1}\, dz_1 \cdots dz_{n-1}\, ds_1 \cdots ds_n.
\end{align*}
We substitute $ z_j \to -z_j +j(s-a_1/n) $ 
%and $ q_j \to -q_j+j(s-a_1/n) $ 
for $ 1 \le j \le n-1 $, to find
\begin{align*}
J_{1,n;a,b}(s) 
& =  \frac{1}{ \prod_{j=1}^n \Gamma_{\R}(2s+a_1+a_{j+1}) } 
\\
& \times \frac{2^{2n-1} }{(4\pi i)^{3n-2}}
  \int_{\scriptstyle s_1,\dotsc, s_{n} \atop \scriptstyle z_1,\dotsc,z_{n-1}, 
     \scriptstyle q_1,\dotsc, q_{n-1}}  
    U_{n,\widetilde{a}}(q_1,\dotsc, q_{n-1})
    V_{n,b}(s_1,\dotsc,s_n) 
\\
& \times \Gamma_{\R}((n+1)s-z_{n-1}+ |a|)
\\
& \times 
  \biggl[ \prod_{j=1}^{n-1} 
  \Gamma_{\R} \Bigl(j \Bigl(s-\frac{a_1}{n}\Bigr) -q_j-z_j \Bigr) 
  \Gamma_{\R} \Bigl(j \Bigl(s-\frac{a_1}{n} \Bigr)-q_j-z_{j-1}+s+a_1 \Bigr) 
    \biggr]
\\
& \times 
  \biggl[ \prod_{j=1}^{n-1} \Gamma_{\R}(z_j-s_j) \biggr]
  \Gamma_{\R}(ns-s_n-a_1+ |a|) 
  \bigg[ \prod_{j=1}^{n} \Gamma_{\R}(z_{j-1}-s_j+s+a_1) \biggr]
\\
& \times 
   dq_1 \cdots dq_{n-1} \,dz_1 \cdots dz_{n-1} \,ds_1 \cdots ds_n.
\end{align*}
Finally, we apply Proposition \ref{lemSO} with 
$ p_j = j(s-a_1/n)-q_j $ $ (1 \le j \le n-1) $,
$ p_n = ns-a_1+|a| $, and $ \sigma = s+a_1 $,
to integrate with respect to the $ s_j $'s and the $ z_j $'s, whence: 
\begin{align*}
  J_{1,n;a,b}(s) 
& =  \frac{\prod_{j=1}^n \Gamma_{\R}(s+a_1+b_j) \Gamma_{\R}(s+a_1-b_j)}
     { \prod_{j=1}^n \Gamma_{\R}(2s+a_1+a_{j+1}) } 
\\
& \times \frac{2^{2n-1} }{(4\pi i)^{n-1}}
  \int_{ \scriptstyle q_1,\dotsc, q_{n-1}}  
  U_{n,\widetilde{a}} (q_1, \dotsc, q_{n-1} ) 
\\
& \times 
  V_{n,b}\Bigl( \Bigl(s-\frac{a_1}{n} \Bigr)- q_1,\dotsc,
    \Bigl( s- \frac{a_1}{n} \Bigr) - q_{n-1},n \Bigl(s-\frac{a_1}{n}\Bigr)+|a| \Bigr)
 \, dq_1 \cdots dq_{n-1}.
\end{align*}
The  proof of part (2) of our lemma then follows from the definition of $ J_{0,n;\widetilde{a},b} $ and from Mellin inversion, and we are done. 
\end{proof}
%%%%%%%%%%%%%%%%%%%%%%%%%%%%%%%%%%%%%%%%%%%%%%%%%%%%%%%%%%%%%%
%%%%%%%%%%%%%%%%%%%%%%%%%%%%%%%%%%%%%%%%%%%%%%%%%%%%%%%%%%%%%%

%%%%%%%%%%% THM SO \times GL %%%%%%%%%%%%%%%%

We now have
\begin{thm} \label{SOGL}
For $ a = (a_1,\dotsc, a_m) $ and $ b=(b_1,\dotsc, b_n) $, we set
\begin{align*}
  L(s; a,b,r) &= \prod_{1 \le j \le m, 1 \le k \le n} \Gamma_{\R}(s+a_j+b_k)\Gamma_{\R}(s+a_j-b_k), \\
  L(s;a, \wedge^2) &= \prod_{1 \le j<k \le m} \Gamma_{\R}(s+a_j+a_k).
\end{align*}Also, let $\ell=m-n$.  Then

\begin{align*}
  &J_{\ell,n;a,b}(s)\\& = \begin{cases}\ds\frac{L(s;a,b,r)}{L(2s;a,\wedge^2)}&\mbox{ if } \ell=0\mbox{ or }1;\\\\\ds\frac{L(s;a,b,r)}{L(2s;a,\wedge^2)} 
  \cdot \frac{1}{4 \pi i} \int_{w} \frac{ \prod_{k=1}^n \Gamma_{\R}(w+b_k) \Gamma_{\R}(w-b_k)}
  { \prod_{j=1}^m \Gamma_{\R}(1-s+w-a_j) \Gamma_{\R} (s+w+a_j) } \,dw&\mbox{ if } \ell=-1.\end{cases}
\end{align*}

\end{thm}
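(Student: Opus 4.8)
The plan is to prove Theorem \ref{SOGL} by induction on $n$, using Proposition \ref{SOGLlem} to step down in rank for the cases $\ell = 0$ and $\ell = 1$, and then treating the case $\ell = -1$ separately by the Barnes-integral machinery analogous to that used in Theorem \ref{thmnminustwo}. First, for the $\ell = 1$ case, Proposition \ref{SOGLlem} (2) reduces $J_{1,n;a,b}(s)$ to $J_{0,n;\widetilde{a},b}(s - a_1/n)$ up to an explicit ratio of $\Gamma_\R$-factors; then Proposition \ref{SOGLlem} (1) reduces $J_{0,n;\widetilde{a},b}$ to $J_{1,n-1;\widetilde{a},\widetilde{b}}$, again up to explicit $\Gamma_\R$-factors. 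Iterating these two reductions alternately brings the rank all the way down to a base case — either $J_{0,1;a,b}(s) = 2\int_0^\infty W_{1,a}^A W_{1,b}^B\,y_1^{s-1/2}\,dy_1/y_1$, which one evaluates directly using $\widehat W_{1,(b_1)}^B = 2K_{b_1/2}(2\pi y_1)$ and the Mellin transform of a product of $K$-Bessel functions (this is essentially Proposition \ref{MBSO} in the rank-one case together with the $GL_1$ Mellin transform), or $J_{1,0;a,b}(s)$, which is trivial. The bookkeeping task is then to check that the accumulated product of $\Gamma_\R$-factors from all the reduction steps telescopes exactly into $L(s;a,b,r)/L(2s;a,\wedge^2)$; the denominator factors $\Gamma_\R(2s + a_1 + a_{j+1})$ in Proposition \ref{SOGLlem} (2) are precisely what build up $L(2s;a,\wedge^2)$, and one must verify that the shift $s \mapsto s - a_1/n$ interacts correctly with the substitution $\widetilde{a}_j = a_{j+1} + ja_1/n$ so that arguments like $2s + a_1 + a_{j+1}$ in rank $n$ become the corresponding arguments in rank $n-1$.

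For the case $\ell = -1$, the strategy parallels Theorem \ref{thmnminustwo}. First I would write $J_{-1,n;a,b}(s)$ via Mellin inversion as an integral over a spectral parameter $w$ (coming from the extra $x_j$-integration over $\R^{n-1}$ in the definition of $J_{-1,n;a,b}$), reducing it to an expression involving $J_{0,n;a,b'}$ or $J_{1,n-1}$-type integrals with shifted parameters, exactly as Proposition \ref{Barnesforms} (3) does in the $GL_n \times GL_{n-2}$ setting. The key input here is a relation expressing the inner unipotent integral $\int_{\R^{n-1}} W_{n,b}^B(u[x]\,\beta[\cdots])\,dx$ as a single Barnes integral over $w$ of an $SO_{2n-1}$- or $GL$-Whittaker function with parameter depending on $w$; one expects this to follow from the integral representation in Proposition \ref{sowhit} together with the $GL_n(\R)$ recursion of Proposition \ref{GLrec2}, in the same spirit as the Claim proved inside Proposition \ref{Barnesforms} (3). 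Once $J_{-1,n;a,b}(s)$ is written as $\frac{1}{4\pi i}\int_w (\text{rational in }\Gamma_\R)\cdot J_{\ell',n';\ldots}(\ldots)\,dw$ with $\ell' \ge 0$, the already-proved cases $\ell = 0, 1$ evaluate the inner zeta integral, and the remaining task is to simplify the resulting $\Gamma_\R$-quotient and perform the substitution in $w$ so that it matches the stated formula $\frac{L(s;a,b,r)}{L(2s;a,\wedge^2)}\cdot\frac{1}{4\pi i}\int_w \frac{\prod_k \Gamma_\R(w+b_k)\Gamma_\R(w-b_k)}{\prod_j \Gamma_\R(1-s+w-a_j)\Gamma_\R(s+w+a_j)}\,dw$.

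The main obstacle I anticipate is the $\ell = -1$ case, specifically establishing the clean reduction of the unipotent-averaged $SO_{2n+1}$ Whittaker integral to a single Barnes integral over $w$ times a lower-rank Whittaker function. This requires carefully choosing coordinates on the unipotent subgroup $u[x_1,\dots,x_{n-1}]$ and on $\beta[y_1,\dots,y_{n-1},1]$, tracking how the Iwasawa decomposition of $u[x]\beta[\ldots]$ interacts with the integral representation for $W_{n,b}^B$, and then identifying the resulting $x$-integral — much as the Claim in Proposition \ref{Barnesforms} (3) identifies a certain $x$-integral with a $GL_{n-1}(\R)$ Whittaker function via Proposition \ref{GLrec2}. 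The $SO$ analog is more intricate because the integral representation in Proposition \ref{sowhit} already involves a double layer of auxiliary integrations (the $t_j$'s and $u_j$'s), so matching powers and exponentials after the substitution will be delicate. For the $\ell = 0, 1$ cases, by contrast, the only real work is the telescoping verification, which is routine once one sets up the induction carefully; I would relegate that bookkeeping to a direct computation, checking the base case and one inductive step explicitly and noting that the general step is identical.
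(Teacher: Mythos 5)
Your proposal matches the paper's own proof essentially step for step: the $\ell=0,1$ cases go by the alternating reduction via Proposition \ref{SOGLlem} down to a rank-one Barnes-lemma base case, and the $\ell=-1$ case is handled by inserting an auxiliary parameter $w$ (Mellin inversion in $y_n$), decomposing $u[x]\beta[\cdot]$ by Iwasawa, rescaling, and applying Proposition \ref{GLrec2} to the resulting $x$-integral — exactly as you outline, and exactly parallel to Proposition \ref{Barnesforms}~(3). One reassurance on the point you flagged as the main obstacle: the worry about the ``double layer of auxiliary integrations'' in Proposition \ref{sowhit} is a red herring, because the $SO_{2n+1}$ integral representation is never invoked in this argument. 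After the Iwasawa decomposition and the substitution $x_j' = x_j\sqrt{p_n/(p_jp_{j+1})}$, Proposition \ref{GLrec2} is applied only to the $GL_{n-1}$ factor $\widehat W^A_{n-1,a}$, promoting it to a $\widehat W^A_{n,\breve a}$ with a $w$- and $s$-dependent parameter $\breve a$; the factor $\widehat W^B_{n,b}$ rides along untouched, and one lands directly on $J_{-1,n;a,b}(s,w) = (\text{ratio of }\Gamma_{\R}\text{'s})\cdot J_{0,n;\breve a,b}\bigl(\tfrac{n}{2} + \tfrac{w-|a|}{n}\bigr)$, after which the already-proved $\ell=0$ case and Mellin inversion finish the job. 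So the recursion is entirely on the $GL$ side, and the $SO$ side causes no trouble at all.
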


%%%%%%%%%%%%%%%%%%%%%%%%%%%%%%%%%%%%%%%%%%%%%%

\begin{proof}
Let us first consider the cases $\ell=0$ and $\ell=1$,   using an induction argument.  By Barnes' first lemma, we compute readily that
$$ 
 J_{1,2;a,b}(s) = 
 \frac{ \Gamma_{\R} (s+a_1+b_1) \Gamma_{\R}(s+a_1-b_1) 
        \Gamma_{\R} (s+a_2+b_1) \Gamma_{\R}(s+a_2-b_1) }
   { \Gamma_{\R}(2s+a_1+a_2) }.
$$
Then Proposition \ref{SOGLlem} and the induction hypothesis yield the desired results.

\medskip

Let us now prove our theorem in the case $\ell=-1$.
Our argument here is quite analogous to the proof of Theorem \ref{GLunip}.

For $ w \in \C $, we  compute the integral
\begin{align*}
 J_{-1,n;a,b}(s,w) 
& = 2^n  \int_{(\R_+)^{n-1}} \int_{\R^{n-1}}
  W_{n-1,a}^A(\alpha[y_1,\dotsc, y_{n-1}])  \\
 & \times 
  W_{n,b}^B( u[x_1,\dotsc,x_{n-1}]  \beta[y_1,\dotsc,y_{n-1},y_n ] ) 
\\
 & \times 
  \biggl[ \prod_{j=1}^{n-1} y_j^{j(s-({3n-3)}/{2}+j)} \biggr] 
  y_n^w  
   \prod_{j=1}^{n-1} dx_j  \prod_{j=1}^n \frac{dy_j}{y_j} ,
\end{align*}
%%%%%%%%%%%%%%%%%%%%%%%%%
as follows.  We use the Iwasawa decomposition of $ u[x_1,\dotsc, x_{n-1}] \beta[y_1,\dotsc,y_n] $ 
(see the begining of the proof of Theorem \ref{GLunip}), together with
the substitutions $ x_j \to (\prod_{p=j}^{n-1}y_p^{-1}) x_j $ for $ 1 \le j \le n-1 $.
We follow this with the substitutions
\begin{align*}
 y_j \to \frac{p_{j+1}}{\sqrt{p_j p_{j+2}}}y_j \ (1 \le j \le n-1), 
\ \ y_n \to \frac{1}{\sqrt{p_n}} y_n, 
\end{align*}
where 
$ p_j = 1 + \sum_{p=1}^{j-1} x_p^2 $ $ (1 \le j \le n) $ and $ p_{n+1} = 1 $.
We thereby arrive at  the following result:
\begin{align*}
 J_{-1,n;a,b}(s,w)
& = 2^n 
   \int_{(\R_+)^{n}} \int_{\R^{n-1}} 
     \exp \biggl\{ -2\pi i \biggl( \sum_{j=1}^{n-2} \frac{x_j x_{j+1}}{ \sqrt{p_jp_{j+2}} } y_j
                  + \frac{x_{n-1}}{ \sqrt{p_{n-1}} } y_{n-1} \biggr) \biggr\}
\\
 & \times \widehat{W}_{n,b}^B( \beta[y_1,\dotsc, y_{n}]) 
\\
 & \times \widehat{W}_{n-1,a}^A 
  \biggl(\alpha\biggl[ \frac{p_2}{\sqrt{p_1p_3}} y_1, \dotsc, 
           \frac{p_{n-1}}{\sqrt{p_{n-2}p_{n}}} y_{n-2}, \frac{p_{n}}{\sqrt{p_{n-1}}} y_{n-1}
  \biggr] \biggr)
\\
 & \times \biggl[ \prod_{j=1}^{n-1} y_j^{js} \biggr] y_{n}^{w+  {n^2}/{2}}
   p_{n}^{ {n(2s-n)}/{4} - ({w+1})/{2}}  
  \prod_{j=1}^{n-1} \frac{dx_j}{\sqrt{p_j}} 
  \prod_{j=1}^{n}  \frac{dy_j}{y_j}.
\end{align*} 
%%%%%%%%%%%
Next we change variables from $ (x_1,\dotsc, x_{n-1}) $ to $ (x_1',\dotsc,x_{n-1}') $, with 
 $ x_j' = x_j \sqrt{ p_n/ (p_j p_{j+1}) } $ for each $j$, and integrate with respect to variables $ x_j' $ 
by way of Proposition \ref{GLrec2}. We find that
\begin{align*}
 J_{-1,n;a,b}(s,w)
& = \frac{2^n}{ \prod_{j=1}^{n-1} \Gamma_{\R}(w-|a|-a_j-n(s-\frac{n}{2})+1)} 
\\
& \times \int_{(\R_+)^{n}} 
   \widehat{W}_{n,b}^B( \beta[y_1,\dotsc, y_{n}]) 
   \widehat{W}_{n,\alpha}^A (\alpha[y_1,\dotsc,y_n])
 \biggl[ \prod_{j=1}^{n} y_j^{j( {n}/{2}+ ({w-|a|})/{n}) } \frac{dy_j}{y_j} \biggr]
\\
& = \frac{1}{ \prod_{j=1}^{n-1} \Gamma_{\R}(w-|a|-a_j-n(s-\frac{n}{2})+1)} 
  \cdot J_{0,n;\breve{a}, b} \Bigl(\frac{n}{2} + \frac{w-|a|}{n} \Bigr),
\end{align*}
where $ \breve{a} = (\breve{a}_1,\dotsc, \breve{a}_n) $ with 
\begin{align*}
 \breve{a}_1 &= -(n-1)(s-\frac{n}{2}) + \frac{n-1}{n}(w-|a|), \\ 
 \breve{a}_j &= a_{j-1} + s - \frac{n}{2} - \frac{1}{n} (w-|a|) \ \ (2 \le j \le n).
\end{align*}
Therefore, the case $\ell=0$ of the present theorem  implies that 
\begin{align*} 
& J_{-1,n;a,b}(s,w)
\\
 & = \frac{ \prod_{1 \le j \le n-1, 1 \le k \le n} \Gamma_{\R}(s+a_j+b_k) \Gamma_{\R}(s+a_j-b_k) }
    { \prod_{1 \le j<k \le n-1} \Gamma_{\R}(2s+a_j+a_k) }
\\
 & \times \frac{ \prod_{k=1}^n \Gamma_{\R}(-(n-1)s+\frac{n^2}{2}+w-|a|+b_k)
      \Gamma_{\R}(-(n-1)s+\frac{n^2}{2}+w-|a|-b_k) }
   {  \prod_{j=1}^{n-1} \Gamma_{\R}(-ns+\frac{n^2}{2}+w-|a|-a_j+1)
     \Gamma_{\R}(-(n-2)s+\frac{n^2}{2} + w-|a|+a_j )  }.
\end{align*}
Thus Mellin inversion and a change of variable yield our assertion, and 
we complete the proof of Theorem \ref{SOGL}.
\end{proof} 

\begin{rem} The local functional equation for $SO_{2n+1}\times GL_{n-1}$, as described in \cite{So2}, amounts, at real places, to the assertion that$$J_{-1,n,a,b}(s)\biggl/\biggl[\frac{L(s;a,b,r)}{L(2s;a,\wedge^2)}\biggr] $$is invariant under $(s,a,b)\to (1-s, -a,-b)$. This invariance is readily apparent from Theorem \ref{SOGL} above.\end{rem}
%%%%%%%%%%%%%%%%%%%%%%%%%%%%%%%%%%%%%%%%%%%%%%%%%%%%%%%%%%%%%%
%%%%%%%%% APPENDIX  %%%%%%%%%%%%%%%
\renewcommand{\thesection}{\Alph{section}}
\setcounter{section}{1}

\section*{Appendix:  Comparison with  unramified computations}

The well-known Kato-Casselman-Shalika formulas describe  unramified (nonarchimedean) Whittaker functions  in terms of characters of 
finite dimensional representations of the corresponding $L$-groups. The 
unramified computations for $SO_{2n+1} \times GL_m $ can then be achieved by combining these realizations of   unramified Whittaker functions with certain facts about the representation theory of $GL_{n}(\C) $ and $ Sp_n(\C) $, (See \cite[Appendix]{GPR}.)
 
In this appendix, we present an alternative, somewhat roundabout, approach to the unramified calculations for $SO_{2n+1}\times GL_n$ and $SO_{2n+1}\times GL_{n+1}$.  This new approach parallels, in a number of ways, our calculations above in the analogous archimedean cases.  We hope that this suggestion of connections between the unramified and archimedean situations might help enable future computations, particularly in the archimedean context.

The general ideas informing this appendix are as follows.  The $p$-adic analogs of our recursive formulas for Mellin transforms of Whittaker functions can be understood as branching laws
for finite dimensional representation of $GL_n(\C)$ and $Sp_n(\C)$.  Moreover, Pieri's rule, which gives the decomposition into irreducibles of the tensor product of 
an irreducible representation with the symmetric $k$th power of the standard representation, can be considered analogous to our Propositions \ref{lemGL} and \ref{lemSO}.
By way of these observations, we may therefore perform the  unramified computationsin a fashion reminescent of  that employed above  -- namely, by reducing the  $ SO_{2n+1}\times GL_{n+1}$ (respectively,  $SO_{2n+1}\times GL_n$) case to that of
 $ SO_{2n+1}\times GL_n$ (respectively, $SO_{2n-1}\times GL_n)$.

We begin by recalling explicit formulas for unramified Whittaker functions.
Let $F$ be a nonarchimedean local field, $\mathcal{O}$ its ring of integers, and 
$ \mathfrak{p} = (\varpi) $ its maximal ideal. 
We set $ q = |\mathcal{O}/\mathfrak{p}| $. 
We take a non-trivial additive character $ \psi $ of $F$ to be unramified.
Let $ \pi_a^A $ (respectively, $ \pi_b^B  $) be the unramified principal series representation
with Satake paramater $ a = {\rm diag} (a_1,\dotsc,a_n) \in GL_n(\C) $
(respectively, $ b  = {\rm diag}(b _1,\dotsc,b _n,b _n^{-1},\dotsc,b _1^{-1}) \in Sp_n(\C) $).
We sometimes write $a = (a_1,\dotsc,a_n)$ and $b =(b _1,\dotsc,b _n) $.
We then consider the $GL_n(\mathcal{O})$- (respectively, $SO_{2n+1}(\mathcal{O})$)-fixed Whittaker function 
$ W_{n,a}^A \in {\mathcal W}(\pi_a^A,\psi) $ 
(respectively, $W_{n,b }^B \in {\mathcal W}(\pi_b^B ,\psi) $),
normalized such that 
$ W_{n,a}^A (1_n) = W_{n,b }^B (1_{2n+1}) = 1 $.

For $ \lambda = (\lambda_1,\lambda_2,\dotsc, \lambda_n) \in \Z^{n}$, we put
\begin{align*}
\alpha[\lambda] &=\alpha[\lambda_1,\dotsc,\lambda_n] 
  = {\rm diag}({\varpi}^{\lambda_1},\dotsc, {\varpi}^{\lambda_n}) \in GL_n(F), \\
\beta[\lambda] &=\beta[\lambda_1,\dotsc,\lambda_n] 
  = {\rm diag}({\varpi}^{\lambda_1},\dotsc, {\varpi}^{\lambda_n},1,
               {\varpi}^{-\lambda_n},\dotsc,{\varpi}^{-\lambda_1}) \in SO_{2n+1}(F). 
\end{align*} 
We set
\begin{align*}
 W_{n,a}^A(\alpha[\lambda]) & = \delta^{1/2}_A(\alpha[\lambda]) \widehat{W}_{n,a}^A(\alpha[\lambda]),
 \\
 W_{n,b }^B(\beta[\lambda]) & = \delta^{1/2}_B(\beta[\lambda]) \widehat{W}_{n,b }^B(\beta[\lambda]).
\end{align*}
where $\delta_A$ (respectively, $\delta_B$) is the modulus character for the Borel subgroup of 
$ GL_n $ (respectively, $SO_{2n+1}$).
According to the formulas of Shintani and Kato-Casselman-Shalika, we have 
\begin{align*}
 \widehat{W}_{n,a}^A(\alpha[\lambda]) &= 
 \begin{cases} \chi_{\lambda}^A(a) & \mbox{ if } \lambda \in \mathcal{P}_n, \\
               0 & \mbox{ otherwise, } \end{cases} \\
 \widehat{W}_{n,b }^B(\beta[\lambda]) &= 
 \begin{cases} \chi_{\lambda}^C(b ) & \mbox{ if } \lambda \in \mathcal{P}_n^+, \\
               0 & \mbox{ otherwise, } \end{cases} 
\end{align*}             
where $ \chi_{\lambda}^A $ (respectively, $ \chi_{\lambda}^C $) is the character of the irreducible 
finite dimensional representation $ \pi_{\lambda}^A $ (respectively, $\pi_{\lambda}^C) $
of $ GL_{n}(\C) $ (respectively, $Sp_n(\C)$) with highest weight $\lambda $. Here 
\begin{align*}
\mathcal{P}_n &= \{ \lambda = (\lambda_1,\lambda_2, \dotsc,\lambda_n) \in \Z^n
  \mid \lambda_1 \ge \lambda_2 \ge \cdots \ge \lambda_n \},
\\
\mathcal{P}_n^+ &= \{ \lambda =(\lambda_1,\lambda_2, \dotsc,\lambda_n) \in \mathcal{P}_n 
  \mid \lambda_n \ge 0 \}. 
\end{align*}

We define subsets of $ \mathcal{P}_n^+ $ as follows.
For a given $ \lambda \in \mathcal{P}_n^+ $ and a nonnegative integer $r$, we set 
\begin{align*}
 \mathcal{P}_n^+(\lambda) & = \{ \mu=(\mu_1,\dotsc,\mu_n) \in \mathcal{P}_n^+ 
 \mid  \lambda_1 \ge \mu_1 \ge \lambda_2 \ge \cdots \ge \mu_{n-1} \ge \lambda_{n} \ge \mu_n \};
\\
 \mathcal{P}_{n-1}^+(\lambda) & = \{ \mu=(\mu_1,\dotsc,\mu_{n-1}) \in \mathcal{P}_{n-1}^+ 
 \mid  \lambda_1 \ge \mu_1 \ge \lambda_2 \ge \cdots \ge \lambda_{n-1} \ge \mu_{n-1} \ge \lambda_{n} \};
\\
 \overline{\mathcal{P}}_n^+(\lambda) 
  & = \{ \mu \in \mathcal{P}_n^+ \mid \lambda \in \mathcal{P}_n^+(\mu) \} \\
  & = \{ \mu=(\mu_1,\dotsc,\mu_n) \in \mathcal{P}_n^+ 
 \mid  \mu_1 \ge \lambda_1 \ge \mu_2 \ge \cdots \ge \lambda_{n-1} \ge \mu_{n} \ge \lambda_n \}; 
\\
 \mathcal{P}_{n,r}^+(\lambda) & = \{ \mu \in \mathcal{P}_n^+(\lambda) \mid |\lambda|-|\mu| = r \};
\\\
 \overline{\mathcal{P}}_{n,r}^+(\lambda) &
  = \{ \mu \in \overline{\mathcal{P}}_n^+(\lambda) \mid \lambda \in \mathcal{P}_{n,r}^+(\mu) \} 
  = \{ \mu \in \overline{\mathcal{P}}_n^+(\lambda) \mid |\mu|-|\lambda| = r \}.
\end{align*}
Here we write $ |\lambda|=\sum_{i=1}^n \lambda_i $ for $ \lambda \in \mathcal{P}_n^+ $.

\medskip 

We first state the nonarchimedean analogs of the recursive formulas in 
Propositions \ref{MBGL} and \ref{MBSO}. 
Let $ \lambda \in \mathcal{P}_n^+ $.  
We have the following:  
\begin{align} \label{pGL}
 \widehat{W}_{n,a}^A(\alpha[\lambda]) &
 = \sum_{\mu \in \mathcal{P}_{n-1}^+(\lambda)} 
   a_n^{|\lambda|-|\mu|} \widehat{W}_{n-1,\widetilde{a}}^A (\alpha[\mu])
\end{align}
and 
\begin{align} \label{pSO}
 \widehat{W}_{n,b }^B(\beta[\lambda]) &
 = \sum_{\nu \in \mathcal{P}_n^+(\lambda)} \sum_{\mu \in \mathcal{P}_{n-1}^+(\nu)} 
   b _n^{2|\nu|-|\lambda|-|\mu|} \widehat{W}_{n-1,\widetilde{b }}^B (\beta[\mu]), 
\end{align}
where $ \widetilde{a} = (a_1,\dotsc, a_{n-1}) $ and 
$ \widetilde{b } = (b _1,\dotsc, b _{n-1}) $.
These formulas can be interpreted as the branching laws
\begin{align*}
 \pi_{\lambda}^A|_{GL_{n-1}(\C)} & = \bigoplus_{\mu \in \mathcal{P}_{n-1}^+(\lambda)} \pi_{\mu}^A, \\
 \pi_{\lambda}^C|_{Sp_{n-1}(\C)} & = \bigoplus_{\mu \in \mathcal{P}_{n-1}^+(\lambda)} m(\lambda,\mu) \pi_{\mu}^C,
\end{align*}
where the mulitiplicity $ m(\lambda,\mu) $ is the cardinality of 
the set $ \{ \nu \in \mathcal{P}_n^+(\lambda) \mid  \mu \in \mathcal{P}_{n-1}^+(\nu) \}. $

\medskip 

To discuss a nonarchimedean analog of the formulas in Propositions \ref{lemGL} and \ref{lemSO}, 
let us recall Pieri's rule. This is a special case of the Littlewood-Richardson rule  (\cite[\S A.1]{FuHa}), which describes 
how the tensor product $ \pi_{\lambda}^A \otimes \pi_{\kappa}^A $ decomposes:  
\begin{align*}
 \pi_{\lambda}^A \otimes \pi_{\kappa}^A \cong \oplus_{\mu} M_{\lambda \kappa}^{\mu} \pi_{\mu}^A. 
\end{align*}
If we write $ (k) := (k,0,\dotsc,0) \in \mathcal{P}_n^+ $, 
then Pieri's rule (\cite[Proposition 15.25]{FuHa}) asserts 
\begin{align} \label{Pieri}
 M_{\lambda (k)}^{\mu} = 
 \begin{cases} 1 & \mbox{ if } \mu \in \overline{\mathcal{P}}_{n,k}^+(\lambda), \\
               0 & \mbox{ otherwise. } \end{cases}
\end{align}  
We denote by $ N_{\lambda \kappa}^{\mu} $ the Littlewood-Richardson coefficient for $ Sp_n(\C) $:
\begin{align*}
 \pi_{\lambda}^C \otimes \pi_{\kappa}^C \cong \oplus_{\mu} N_{\lambda \kappa}^{\mu} \pi_{\mu}^C. 
\end{align*}
It is known that $ N_{\lambda \kappa}^{\mu} $ may be expressed in terms of 
$ M_{\lambda \kappa}^{\mu} $ as follows:
$$
   N_{\lambda \kappa}^{\mu} = \sum_{\nu, \sigma, \tau} 
   M_{\sigma \nu}^{\lambda} M_{\tau \nu}^{\mu} M_{\sigma \tau}^{\kappa}
$$
(\cite[\S 25.3]{FuHa}).
From the Littlewood-Richardson rule, we find that 
\begin{align*} 
 M_{\sigma \tau}^{(k)} 
= \begin{cases}   
  1  & \mbox{ if }  \sigma = (r) \mbox{ and } \tau = (k-r) \mbox{ for\ some } 0 \le r \le k, \\   
  0  & \mbox{ otherwise. }
 \end{cases}  
\end{align*}
Then Pieri's rule (\ref{Pieri}) implies a Pieri-type rule for $Sp_n(\C)$:
\begin{equation} \label{PieriSp}
\begin{split}
  N_{\lambda (k)}^{\mu} & = \sum_{0 \le r \le k} 
   \sum_{\nu} M_{(r) \nu}^{\lambda} M_{(k-r) \nu}^{\mu} \\
  & = \sum_{0 \le r \le k} \sum_{ \nu \in \mathcal{P}_{n,r}^{+}(\lambda) } 
    \sum_{ \mu \in \overline{\mathcal{P}}_{n,k-r}^+(\nu)} 1 \\
  & = \sharp \{ \nu \in \mathcal{P}_n^+(\lambda) \cap \mathcal{P}_n^+(\mu) 
      \mid |\lambda|+|\mu|-2|\nu| = k \}.
\end{split}
\end{equation}
Let $ h_k $ denote the $k$th complete symmetric polynomial in $n$ variables:
\begin{align*}
 h_{k}(x_1,\dotsc,x_n) 
  = \sum_{1 \le i_1 \le \cdots \le i_k \le n} x_{i_1} \cdots x_{i_k}. 
 \end{align*}
If we set
\begin{align*}
 h_k^A(a) &= h_r(a_1,\dotsc,a_n), \\
 h_k^B(b ) &= h_r(b _1,\dotsc,b _n,b _1^{-1},\dotsc,b _n^{-1}),
\end{align*}
then $ h_k^A $ and $ h_k^B $ give the characters of representations with 
highest weight $ (k,0,\dotsc,0) $ of $GL_n(\C) $ and $Sp_n(\C) $, respectively.
Thus (\ref{Pieri}) and (\ref{PieriSp}) imply that  
\begin{align*}
 \sum_{\mu \in \overline{\mathcal{P}}_{n,k}^+(\lambda) } \widehat{W}_{n,a}^A(\alpha[\mu]) 
  = h_k^A(a) \widehat{W}_{n,a}^A(\alpha[\lambda])
\end{align*}
and
\begin{align*}
 \sum_{0 \le r \le k} \sum_{\nu \in \mathcal{P}_{n,k-r}^+(\lambda)} 
 \sum_{ \mu \in \overline{\mathcal{P}}_{n,r}^+(\nu) }  \widehat{W}_{n,b }^B(\beta[\mu]) 
  = h_k^B(b ) \widehat{W}_{n,b }^B(\beta[\lambda])
\end{align*}
for $ \lambda \in \mathcal{P}_n^+ $. In light of the formulas
\begin{align*}
 \sum_{k=0}^{\infty} h_k^A(a) t^k&= \prod_{j=1}^n (1-a_j t)^{-1},\\ 
 \sum_{k=0}^{\infty} h_k^B(b ) t^k & = \prod_{j=1}^n \{ (1-b _j t)(1-b _j^{-1} t) \}^{-1},
\end{align*}
we arrive at our nonarchimedean analogs of Propositions \ref{lemGL} and \ref{lemSO}:
\begin{align} \label{pGLlem}
 \sum_{\mu \in \overline{\mathcal{P}}_n^+(\lambda)} 
 t^{|\mu|-|\lambda|} \widehat{W}_{n,a}^A(\alpha[\mu]) = 
 \Bigl[ \prod_{j=1}^n (1-a_j t)^{-1} \Bigr] \widehat{W}_{n,a}^A(\alpha[\lambda]),
\end{align}
\begin{align} \label{pSOlem}
 \sum_{\nu \in \mathcal{P}_n^+(\lambda)} \sum_{\mu \in \overline{\mathcal{P}}_n^+(\nu)} 
 t^{|\lambda|+|\mu|-2|\nu|} \widehat{W}_{n,b }^B(\beta[\mu]) = 
   \Bigl[ \prod_{j=1}^n \{ (1-b _j t)(1-b _j^{-1}t) \}^{-1} \Bigr]
   \widehat{W}_{n,b }^B(\beta[\lambda]).
\end{align}

\medskip 

Using the  above formulas, we now compute the unramified zeta integrals in a manner similar to that employed above, in 
the archimedean situation.
The unramified zeta integrals for $SO_{2n+1}\times GL_n$ and $SO_{2n+1}\times GL_{n+1}$ 
are given by
\begin{align*}
 J_{0,n;a,b }(s) = \sum_{\lambda \in \mathcal{P}_n^+} 
  \widehat{W}_{n,a}^A(\alpha[\lambda]) \widehat{W}_{n,b }^B(\beta[\lambda]) q^{-|\lambda|s} 
\end{align*}
and 
\begin{align*}
 J_{1,n;a,b }(s) = \sum_{\lambda \in \mathcal{P}_n^+}
  \widehat{W}_{n+1,a}^A(\alpha[\lambda,0]) \widehat{W}_{n,b }^B(\beta[\lambda])q^{-|\lambda|s},
\end{align*}
respectively. Here we have written  $\alpha[\lambda,0] =\alpha[\lambda_1,\dotsc,\lambda_n,0].$ 

We can relate $J_{0,n;a,b }(s) $ (respectively, $J_{1,n;a,b }(s) $) to 
$ J_{1,n-1;a,\widetilde{b }}(s) $ (respectively, $J_{0,n;a,b }(s) $) as follows.
From (\ref{pSO}), we have
\begin{align*}
 J_{0,n;a,b }(s) & = \sum_{\lambda \in \mathcal{P}_n^+} 
 \sum_{\nu \in \mathcal{P}_n^+(\lambda)} \sum_{\mu \in \mathcal{P}_{n-1}^+(\nu)}
 \widehat{W}_{n,a}^A(\alpha[\lambda]) \widehat{W}_{n-1,\widetilde{b }}^B(\beta[\mu])
 b _n^{2|\nu|-|\lambda|-|\mu|} q^{-|\lambda|s}.
\end{align*}
We use (\ref{pGLlem}) with $ t= b _n^{-1} q^{-s} $ and $ t=b _n q^{-s} $ successively
to sum over $ \lambda $ and $ \nu $:
\begin{align*}
& J_{0,n;a,b }(s) \\
& = \Bigl[ \prod_{j=1}^n (1-a_j b _n^{-1} q^{-s})^{-1} \Bigr]  
 \sum_{\nu \in \mathcal{P}_n^+} \sum_{\mu \in \mathcal{P}_{n-1}^+(\nu)}
 \widehat{W}_{n,a}^A(\alpha[\nu]) \widehat{W}_{n-1,\widetilde{b }}^B(\beta[\mu])
 b _n^{|\nu|-|\mu|} q^{-|\nu|s} \\
&= \Bigl[ \prod_{j=1}^n \{ (1-a_j b _n q^{-s})(1-a_j b _n^{-1} q^{-s}) \}^{-1} \Bigr]
   \sum_{\mu \in \mathcal{P}_{n-1}^+} 
   \widehat{W}_{n,a}^A(\alpha[\mu,0]) \widehat{W}_{n-1,\widetilde{b }}^B(\beta[\mu]) q^{-|\mu|s}
\\
&= \Bigl[ \prod_{j=1}^n \{(1-a_j b _n q^{-s})(1-a_j b _n^{-1} q^{-s})\}^{-1} \Bigr]
    J_{1,n-1;a,\widetilde{b }}(s).
\end{align*}

Now, let us consider $J_{ 1,n;a,b }(s) $. We observe that  (\ref{pGL}) implies
\begin{align*}
 J_{1,n;a,b }(s)
& = \sum_{\lambda \in \mathcal{P}_n^+} \sum_{\mu \in \mathcal{P}_n^+(\lambda)} 
    \widehat{W}_{n,\widetilde{a}}^A(\alpha[\mu]) \widehat{W}_{n,b }^B(\beta[\lambda]) 
    a_{n+1}^{|\lambda|-|\mu|} q^{-|\lambda|s}. 
\end{align*}
Apply (\ref{pGLlem}) with $ t=a_{n+1} q^{-2s} $ to $ \widehat{W}_{n,\widetilde{a}}^A(\alpha[\mu]) $, 
to get 
\begin{align*}
& \Bigr[ \prod_{j=1}^{n} (1-a_j a_{n+1}q^{-2s})^{-1} \Bigr] J_{1,n;a,b }(s) \\
& = \sum_{\lambda \in \mathcal{P}_n^+} \sum_{\mu \in \mathcal{P}_n^+(\lambda)} 
    \sum_{\nu \in \overline{\mathcal{P}}_n^+(\mu)} 
    \widehat{W}_{n,\widetilde{a}}^A(\alpha[\nu]) 
    \widehat{W}_{n,b }^B(\beta[\lambda]) 
    a_{n+1}^{|\lambda|+|\nu|-2|\mu|} q^{-(|\lambda|+2|\nu|-2|\mu|) s}
\\
& = \sum_{ \nu \in \mathcal{P}_n^+ } \widehat{W}_{n,\widetilde{a}}^A(\alpha[\nu])
    q^{-|\nu|s}
   \sum_{\mu \in \mathcal{P}_n^+(\nu)}
  \sum_{\lambda \in \overline{\mathcal{P}}_n^+(\mu)} \widehat{W}_{n,b }^B(\beta[\lambda])
  (a_{n+1} q^{-s})^{|\nu|+|\lambda|-2|\mu|}.
\end{align*}
By (\ref{pSOlem}) with $ t = a_{n+1} q^{-s} $, we can sum over 
$ \lambda $ and $ \mu $, to get 
\begin{align*}
 J_{1,n;a,b }(s)
= \frac{ \prod_{j=1}^{n} \{(1-a_{n+1}b _j q^{-s})(1-a_{n+1}b _j^{-1} q^{-s})\}^{-1} }
  { \prod_{j=1}^{n} (1-a_j a_{n+1}q^{-2s})^{-1} } J_{0,n;\widetilde{a},b }(s) ,
\end{align*} and we are done.

\end{document}